\tikzset{
  optree/.style={scale=.5,thick,grow'=up,level distance=10mm,inner sep=1pt},
  comp/.style={draw=none,circle,fill,line width=0,inner sep=0pt},
  dot/.style={draw,circle,fill,inner sep=0pt,minimum width=3pt},
  circ/.style={draw,circle,inner sep=1pt,minimum width=4mm},
  emptycirc/.style={draw,circle,inner sep=1pt,minimum width=2mm},  
  root/.style={level distance=10mm,inner sep=1pt},
  leaf/.style={draw=none,circle,fill,line width=0,inner sep=0pt},
  nodot/.style={draw,circle,inner sep=1pt},
}
\newtheorem*{thm*}{Theorem}
\newtheorem{thm}{Theorem}
\newtheorem{lem}[thm]{Lemma}
\newtheorem{prp}[thm]{Proposition}
\newtheorem{crl}[thm]{Corollary}
\theoremstyle{definition}
\newtheorem{dfn}{Definition}
\theoremstyle{remark}
\newtheorem{rmk}{Remark}
\DeclareMathOperator{\rev}{rev_t}
\newcommand{\oeis}[1]{\href{https://oeis.org/#1}{{#1}}}
\newcommand{\K}{\mathbb{K}}
\newcommand{\N}{\mathbb{N}}
\newcommand{\Sym}{\mathfrak{S}}
\newcommand{\g}{\mathfrak{g}}
\newcommand{\Mult}{\mathrm{Mult}}
\newcommand{\Part}{\mathrm{Part}}
\newcommand{\descr}{\mathrm{descr}}
\newcommand{\dg}{\mathrm{dg}}
\newcommand{\End}{\mathrm{End}}
\newcommand{\Hom}{\mathrm{Hom}}
\newcommand{\Red}{\mathcal{FRG}}
\newcommand{\V}{\mathcal{V}}
\newcommand{\E}{\mathcal{E}}
\newcommand{\R}{\mathcal{R}}
\newcommand{\RG}{\mathcal{RG}}
\newcommand{\FRGHT}{\mathcal{FG}}
\newcommand{\FRHT}{\mathcal{FH}}
\newcommand{\RHT}{\mathcal{H}}
\newcommand{\RGT}{\mathcal{G}}
\newcommand{\RT}{\mathcal{RT}}
\newcommand{\Tt}{\mathcal{T}}
\newcommand{\Ss}{\mathcal{S}}
\newcommand{\Pp}{\mathcal{P}}
\newcommand{\F}{\mathcal{F}}
\newcommand{\MC}{\mathrm{MC}}
\newcommand{\Tw}{\mathrm{Tw}}
\newcommand{\FinSet}{\mathrm{BijSet}}
\newcommand{\Vect}{\mathrm{Vect}}
\newcommand{\RCG}{\mathrm{RedComGreg}}
\newcommand{\CG}{\mathrm{ComGreg}}
\newcommand{\greg}{\mathrm{Greg}}
\newcommand{\Lie}{\mathrm{Lie}}
\newcommand{\uCom}{\mathrm{uCom}}
\newcommand{\PL}{\mathrm{PreLie}}
\newcommand{\CPL}{\mathrm{ComPreLie}}
\newcommand{\FMan}{\mathrm{FMan}}
\begin{document}

\title{Hypertrees and embedding of the $\mathrm{FMan}$ operad}

\author[]{Paul Laubie} 
\thanks{Paul Laubie was partially supported by the French national research agency [grant ANR-20-CE40-0016].}
\address{Institut de Recherche Mathématique Avancée, UMR 7501, Université de Strasbourg et CNRS, 7 rue René-Descartes, 67000 Strasbourg CEDEX, France}
\email{laubie@unistra.fr}

\begin{abstract}
    The operad $\mathrm{FMan}$ encodes the algebraic structure on vector fields of Frobenius manifolds, in the same way as the operad $\mathrm{Lie}$ encodes the algebraic structure on vector fields of a smooth manifold. It is well known that the operad $\mathrm{Lie}$ admits an embedding in the operad $\mathrm{PreLie}$ encoding pre-Lie algebras. We prove a conjecture of Dotsenko stating that the operad $\mathrm{FMan}$ admits an embedding in the operad $\mathrm{ComPreLie}$. The operad $\mathrm{ComPreLie}$ is the operad encoding pre-Lie algebras with an additional commutative product such that right pre-Lie multiplications act as derivations. To prove this result, we first remark a link between the Greg trees and the so-called operadic twisting of $\mathrm{PreLie}$. We then give a combinatorial description of the operad $\mathrm{ComPreLie}$ \emph{à la} Chapoton-Livernet with forests of rooted hypertrees. We generalize this construction to forests of rooted Greg hypertrees, and then use operadic twisting techniques to prove the conjecture.
\end{abstract}

\maketitle

\section*{Introduction}

Pre-Lie algebras (or left--symmetric algebras) appear in a wide range of mathematical domains. Indeed, they appear in combinatorics, deformation theory, differential geometry, renormalization, see \cite{LSA}. Moreover, the operad $\PL$ encoding pre-Lie algebras is deeply related to a famous combinatorial species, the rooted trees. Indeed, an operadic structure is defined on the species of rooted trees in \cite{RT}, and the operad obtained this way is proven to be isomorphic to $\PL$. This construction can be generalized to the species of rooted Greg trees which are rooted trees with white and black vertices such that black vertices are unlabeled and have at least two children, see \cite{Me}, defining the operad $\greg$. We show that the operad $\greg$ is closely related to the so-called operadic twisting of $\PL$ \cite{PLTW,TW}. From this fact, one may wonder if there are analogous combinatorial descriptions of some related operads, or conversely if some combinatorial objects related to rooted trees admit operadic structures analogous to the one on rooted trees. The operad $\CPL$ is the operad encoding pre-Lie algebras with an additional commutative product such that right pre-Lie multiplications act as derivations. This operad was first introduced in \cite{DefCPL}. We show that it admits a combinatorial description in terms of forests of rooted hypertrees which can be further generalized to the species of forests of rooted Greg hypertrees, thus defining a new operad that we denote $\CG$. From the link between $\greg$ and the operadic twisting of $\PL$, we relate $\CG$ to the operadic twisting of $\CPL$ to prove a conjecture of Dotsenko \cite{conjFMan} stating that the operad $\FMan$ encoding the algebraic structure on vector fields of Frobenius manifolds admits an embedding in the operad $\CPL$. To do so, we need to slightly modify the operad $\CG$ to obtain the operad $\dg\CG$, and we relate $\dg\CG$ to the operadic twisting of $\CPL$, allowing us to show the main theorem of this paper:
\begin{thm*} [Th.~\ref{thm:main}]
    The morphism of operads $\FMan\to\CPL$ is injective.
\end{thm*}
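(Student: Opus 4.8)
The plan is to prove injectivity of $\FMan\to\CPL$ by factoring it through the intermediate operads whose combinatorial models the paper has just built. Let me sketch the architecture of the argument.

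The plan is to prove injectivity by transporting the morphism $\FMan\to\CPL$ into the explicit combinatorial operads constructed above and reducing the statement to a linear-independence check on an explicit basis. The backbone is the factorization of $\FMan\to\CPL$ through the operadic twisting of $\CPL$, for which the combinatorial model $\dg\CG$ of forests of rooted Greg hypertrees provides complete bookkeeping. This mirrors the classical embedding $\Lie\hookrightarrow\PL$, where the analogous role is played by the relation between $\greg$ and the operadic twisting of $\PL$.

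First I would fix the combinatorial identifications already in hand: $\CPL$ as the operad of forests of rooted hypertrees, its generalization $\CG$ to forests of rooted Greg hypertrees, and the modification $\dg\CG$. Under these identifications the hypertrees \emph{without} black vertices form a sub-species, so there is a canonical inclusion $\CPL\hookrightarrow\dg\CG$ together with a projection $\dg\CG\twoheadrightarrow\CPL$ obtained by forgetting (contracting) the black vertices. The point of working inside $\dg\CG$ is that it carries a distinguished basis indexed by forests of rooted Greg hypertrees, in which every element of the relevant operads is a manifest linear combination of combinatorial monomials.

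Second, I would invoke the relation between $\dg\CG$ and the operadic twisting $\Tw\CPL$: the black, unlabelled, at-least-binary vertices of a Greg hypertree are precisely the Maurer--Cartan insertions produced by the twisting procedure, so a forest of Greg hypertrees is exactly a basis monomial of $\Tw\CPL$ equipped with its twisted differential. This turns the abstract twisting into a combinatorial object with an explicit basis, and identifies the image of $\FMan$ inside $\Tw\CPL\cong\dg\CG$: the generators of $\FMan$ map to specific small forests, and their operadic composites are spanned by Greg-hypertree forests. Injectivity of $\FMan\to\CPL$ then follows once one shows that a spanning set of $\FMan$ in each arity maps to \emph{linearly independent} forests. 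I would organize this by a filtration according to the number of black vertices (equivalently by the number of $\MC$ insertions), so that the associated graded of the map is visibly injective, and I would corroborate the count by matching dimensions against the known enumeration of the relevant forests of Greg hypertrees.

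The hard part will be twofold. The conceptual obstacle is establishing the dictionary of the second step faithfully, i.e.\ checking that the combinatorial operad $\dg\CG$ really does reproduce $\Tw\CPL$ with the correct twisted differential and signs, and that under this identification the given morphism $\FMan\to\CPL$ is the combinatorial one obtained by forgetting black vertices. The technical heart is then the linear-independence statement: one must rule out any accidental cancellation among the forests with black vertices that occur in the image of $\FMan$. I expect the sign bookkeeping of the twisting together with the control afforded by the black-vertex filtration to be exactly what makes these cancellations impossible, which is why routing the proof through $\dg\CG$ rather than arguing directly in $\CPL$ is the decisive move.
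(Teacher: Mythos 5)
There is a genuine gap, and it sits exactly where you place the ``technical heart'' of your plan. Your mechanism for injectivity --- show that a spanning set of $\FMan$ maps to linearly independent forests, with the independence organized by a filtration by the number of black vertices so that ``the associated graded of the map is visibly injective'' --- cannot work, because the image of $\FMan\to\CPL$ lies entirely in the part with \emph{no} black vertices: composites of $x-y$ and $c$ are linear combinations of forests of rooted hypertrees, and operadic composition of such elements never creates a black vertex. Black vertices arise only from the \emph{differential} of $\dg\CG$, not from composing the images of the generators, so your filtration is trivial on the image and provides no leverage against cancellations; the hard part is left untouched. The paper's actual argument is homological and quite different in substance: it proves that $H^*(\dg\CG)$ is concentrated in degree $0$ (via the filtration by height, a decomposition of the associated graded complex into tensor products of complexes $\RGT(E)$ indexed by ``shapes'', the K\"unneth formula, and the known computation $H^*(\Tw\PL)=\Lie$); it identifies $H^0(\dg\CG)$ with the suboperad of $\CPL$ generated by $x-y$ and $c$, i.e.\ with the image of $\FMan$; and it then reads off $\dim H^0(\dg\CG)(n)$ as an Euler characteristic, $f_\Red(t,1,-1)=\sum_{n\geq1}n^{n-1}\frac{t^n}{n!}$, which equals $\dim\FMan(n)=n^{n-1}$ (known from Dotsenko's earlier work). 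The surjection $\FMan\to H^0(\dg\CG)$ between spaces of equal finite dimension is then an isomorphism, giving injectivity. In other words, the dimension match is not ``corroboration'' as in your sketch --- it \emph{is} the proof, and it only becomes available after the concentration theorem, which your proposal never addresses.

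Your step two is also incorrect as stated, and the paper is explicit about why: one cannot equip $\CG$ (equivalently, forests of rooted Greg hypertrees viewed as ``basis monomials of $\Tw\CPL$'') with the differential $d(x)=g$, $d(c)=0$, because applying $d$ to the relation $x\circ_1 c - (c\circ_1 x).(1\;3) - c\circ_2 x$ produces the nonzero element $g\circ_1c - c\circ_2g - (c\circ_1g).(2\;3)$. This is precisely why the paper introduces the reduced quotient $\RCG=\CG/\langle g\circ_1c - c\circ_2g - (c\circ_1g).(2\;3)\rangle$, whose underlying species $\Red$ (black vertices admit no incoming hyperedges) is strictly smaller than what a twisting construction on $\CPL$ would produce; only on this quotient does the differential exist, defining $\dg\CG$. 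So the dictionary ``$\dg\CG$ is exactly $\Tw\CPL$ with its twisted differential'' is false, and the paper never uses such an identification: it works with $\dg\CG$ directly and imports the twisting technology only block-by-block, through the known result about $\Tw\PL$ applied to each maximal subtree factor $\RGT(E)$.
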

This theorem comes with a side result:
\begin{thm*} [Th.~\ref{thm:CPLK}]
    The operad $\CPL$ is Koszul.
\end{thm*}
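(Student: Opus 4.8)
The plan is to read a quadratic presentation of $\CPL$ off the combinatorial description above and to prove Koszulness by exhibiting $\CPL$ as a gluing of two Koszul operads along a distributive law. The operad $\CPL$ is generated by the pre-Lie product and the commutative product, and its quadratic relations split into three families: the pre-Lie relation, which cuts out a copy of $\PL$; the relations on the commutative product, which cut out a copy of the commutative operad $\mathrm{Com}$; and the Leibniz-type relation expressing that right pre-Lie multiplications act as derivations of the commutative product. The first step is to record the classical facts that $\PL$ and $\mathrm{Com}$ are Koszul, with Koszul duals $\Perm$ and $\Lie$ respectively.

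The heart of the argument is to interpret the Leibniz relation as a distributive law $\lambda\colon \PL\circ\mathrm{Com}\to\mathrm{Com}\circ\PL$. Read from the pre-Lie-over-commutative composite to the sum of commutative-over-pre-Lie composites, the relation becomes a rewriting rule that pushes pre-Lie products towards the leaves and commutative products towards the root. I would verify that $\lambda$ satisfies the two coherence axioms of a distributive law and that the induced operad structure on $\mathrm{Com}\circ\PL$ recovers $\CPL$. This is exactly where the combinatorial model does its work: the basis of forests of rooted hypertrees established earlier for $\CPL$ yields an explicit $\mathbb{S}$-module, and one must check that its dimensions match those of the composite, $\dim\CPL(n)=\dim(\mathrm{Com}\circ\PL)(n)$, confirming that the distributive law is valid and that no further relations are forced. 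Koszulness then follows from Markl's theorem: an operad arising from a distributive law between two Koszul operads, whose underlying $\mathbb{S}$-module is the expected composite, is again Koszul.

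The concrete tool for checking validity is a quadratic Gröbner basis computation in the shuffle operad associated with $\CPL$. After fixing an admissible ordering on shuffle tree monomials, I would compute the leading terms of the three relation families and verify that every critical pair reduces to zero; a positive outcome shows that the defining relations form a quadratic Gröbner basis, which by the criterion of Dotsenko and Khoroshkin already implies that $\CPL$ is Koszul, and at the same time certifies the normal form and hence the dimension match invoked above. The distributive-law route and the Gröbner route are thus two faces of a single computation.

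The main obstacle is precisely this confluence and dimension-match verification. The Leibniz rule overlaps both with itself and with the pre-Lie relation, and at the critical cubic monomials the associator terms and the derivation terms must conspire to cancel; a single critical pair failing to reduce would mean either that the Gröbner basis is not quadratic or that the claimed normal form is incorrect. Since the forest-of-hypertrees basis furnishes an independent prediction of the normal forms, I expect the verification to be a finite, if delicate, check rather than an open-ended search, and to succeed on exactly these combinatorial grounds.
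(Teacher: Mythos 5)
Your central step fails: the Leibniz-type relation of $\CPL$ does not define a distributive law, and the dimension match you rely on is false. A distributive law $\lambda\colon\PL\circ\mathrm{Com}\to\mathrm{Com}\circ\PL$ must rewrite \emph{every} mixed quadratic composition, but the relation $x\circ_1 c=(c\circ_1 x).(1\;3)+c\circ_2 x$ and its $\Sym_3$-translates only rewrite those compositions in which the commutative product sits in the \emph{first} input of the pre-Lie product; the compositions of type $x\circ_2 c$, i.e.\ $a\triangleleft(b\cdot c)$, satisfy no relation at all and remain irreducible. As a result the underlying $\Sym$-module of $\CPL$ is strictly larger than the composite: in arity $3$ one has $\dim(\mathrm{Com}\circ\PL)(3)=16$ (rooted forests; the same count holds for $\PL\circ\mathrm{Com}$, so no reordering helps), while $\dim\CPL(3)=19$. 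The three extra basis elements are exactly the rooted hypertrees with a genuine hyperedge, which are the images of the irreducible elements $a\triangleleft(b\cdot c)$. So Markl's theorem cannot be invoked, your predicted normal forms are not the correct ones, and the confluence check you propose would genuinely fail rather than being a delicate-but-successful finite verification. There is also a logical issue: the forests-of-hypertrees description of $\CPL$ is not available as an independent input, since in the paper it is \emph{deduced} from Koszulness via the Hilbert series identity that Koszulness provides.

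The Gröbner/rewriting route is the right spirit, and is what the paper actually does, but on the other side of Koszul duality and with a different way of certifying confluence. The paper works with $\CPL^!$, which is small (its dimensions are the logarithmic numbers $u_n$, with $u_4=24$), exhibits a terminating quadratic rewriting system for it, and then avoids checking critical pairs directly: an explicit $\CPL^!$-algebra, $\Lie(X)\otimes\uCom(X)$, gives the lower bound $\dim\CPL^!(4)\geq 24$, while the terminating rewriting system has exactly $24$ normal forms in arity $4$; since normal forms always span, the bounds coincide, forcing confluence. A quadratic convergent rewriting system yields Koszulness of $\CPL^!$, hence of $\CPL$, and as a by-product the Hilbert series $f_{\CPL}(t)=\rev(\log(1+t)\exp(-t))$, from which the hypertree model is then justified. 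If you want to salvage your plan, this is the structure to imitate: replace the (nonexistent) composite basis by an honest lower bound coming from an explicit algebra over the dual, and run the rewriting argument where the operad is smallest.
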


\paragraph*{\textbf{Organization of the paper}} In the first section, we recall general fact on species and operads. We then recall the construction of the operadic structure on the species of rooted trees from \cite{RT}, which give the operad $\PL$, and its generalization on the rooted Greg trees from \cite{Me}, which give the operad $\greg$. In Section~\ref{sec:greg}, give a small introduction to the operadic twisting, see \cite{PLTW,TW}, and we relate the operad $\greg$ to the operadic twisting of $\PL$. In Section~\ref{sec:FRHT}, we first show that the operad $\CPL$ is Koszul. Then, we extend the operadic structure from the species of rooted trees to the species of forests of rooted hypertrees, and using the Koszulness of $\CPL$, we show that the underlying species of $\CPL$ is the species of forests of rooted hypertrees. In the next section, we use the same idea extending the operadic structure from the species of rooted Greg trees and from the forests of rooted hypertrees to the species of forests of rooted Greg hypertrees, obtaining the operad $\CG$. We show that this operad is Koszul, and we give a quadratic presentation of this operad. However, the operad $\CG$ cannot directly be used to prove the main theorem. In Section~\ref{sec:red}, we explain how to modify the operad $\CG$. In order to do so, we introduce the species of reduced rooted Greg hypertrees and put a structure of differential graded operad on this species, defining this way the differential graded operad $\dg\CG$. In the final section, we use the operad $\dg\CG$ to prove the main theorem. Namely, we show that the cohomology of $\dg\CG$ is concentrated in degree $0$, and we prove that it is in fact isomorphic the operad $\FMan$, proving the main theorem.

\section{Rooted trees and Rooted Greg trees}

Let $\K$ be a field of characteristic $0$. Let $\underline{n}$ be the set $\{1,\dots,n\}$. We recall the bases of the theory of species and operads. We refer to \cite{LinSpe} for a more detailed introduction to the theory of species and to \cite{AlgOp} for the theory of operads. In the following, all species will be linear species and operads will be symmetric operads. One need to be careful since the objects we are calling linear species are ``species with values in $\Vect_\K$'' which the same convention as in \cite{AlgOp} but different from the one in \cite{LinSpe} where linear species correspond to ``linearly ordered species'' what we will not use here.

Let $\FinSet$ be the category such that the objects are the finite sets and the morphisms are the bijections. A \emph{linear species} $\Ss$ is a functor from $\FinSet$ to $\Vect_\K$ the category of vector spaces over $\K$. Its \emph{Hilbert series} is the formal power series defined by:
$$ f_\Ss(t) = \sum_{n\in\N} \frac{\dim_\K(\Ss(\underline{n}))}{n!} t^n $$
For $\Ss$ a species, the \emph{Schur functor} $\F_\Ss:\Vect_\K\to\Vect_\K$ is the functor defined by: 
$$\F_\Ss(V) = \bigoplus_{n\in\N} \Ss(\underline{n})\otimes_{\Sym_n} V^{\otimes n}$$
where $V^{\otimes n}$ is the $n$-th tensor power of $V$ and $\Sym_n$ acts on $V^{\otimes n}$ by permuting the factors. The species $\Ss$ is uniquely determined by the functor $\F_\Ss$.
For $\Ss$ and $\R$ two species, there exists a unique species $\Ss\circ\R$ such that $F_\Ss\circ F_\R = F_{\Ss\circ\R}$. The operation $\circ$ is called the \emph{plethysm} of species. It gives a structure of monoidal category on the category of species. Moreover, $f_{\Ss\circ\R} = f_\Ss(f_\R)$. Another monoidal structure of the category of species that will be used latter in this article is given the Cauchy product of species. The Cauchy product of $\Ss$ and $\R$ is defined by:
$$ (\Ss\otimes\R)(A) = \bigoplus_{A=A_1\sqcup A_2} \Ss(A_1)\otimes\R(A_2) ,$$
where $A_1\sqcup A_2$ is the disjoint union of $A_1$ and $A_2$. We recall the notation of the usual species $X$ the \emph{singleton species} such that $X(\underline{1})=\K$ endowed with the trivial action and $X(\underline{n})=0$ for $n\neq 1$, and the species $E$ the \emph{set species} such that $E(\underline{n})=\K$ endowed with the trivial action for all $n\in\N$. For $k\in\N$, the species $E_{\geq k}$ is the species such that $E_{\geq k}(\underline{n})=\K$ endowed with the trivial action if $n\geq k$ and $E_{\geq k}(\underline{n})=0$ otherwise.

An \emph{operad} $\Pp$ is a monoid object in the category of species with respect to the plethysm. It can equivalently be defined as a species $\Ss$ together with a collection of equivariant maps, the \emph{partial compositions}:
$$\circ_i:\Ss(A\sqcup\{i\})\otimes\Ss(B)\to\Ss(A\sqcup B)$$
for all finite sets $A$ and $B$, verifying the \emph{parallel} and \emph{sequential} axioms, see \cite[Paragraph~5.3.7]{AlgOp}.
The parallel and sequential axioms ensure that the partial compositions behave like partial compositions of multilinear maps. Indeed, let $V$ be a vector space, and let us define $\End_V(n)=\Hom(V^{\otimes n},V)$, then, the partial compositions give a structure of operad on $\End_V$. The operad $\End_V$ is called the \emph{endomorphism operad} of $V$. Moreover, for $\Pp$ an operad, a morphism of operads $\Pp\to\End_V$ is a \emph{$\Pp$--algebra structure} on $V$. If $\Pp$ is defined by generators and relations, the data of a morphism of operad $\Pp\to\End_V$ is the same as a collection of multilinear maps of $V$ verifying the relations defining $V$. 

Those definitions generalize in a straightforward way to the graded and the differential graded cases using Koszul sign rules in the latter case. We will be using operadic Koszul theory, see \cite{KOP,AlgOp}, to compute Hilbert series of operads, using the following result:

\begin{prp}\label{prp:KoszulHilbertSeries}
    Let $\Pp$ be a Koszul operad and $\Pp^!$ be its Koszul dual. Let $f_\Pp$ and $f_{\Pp^!}$ be the Hilbert series of $\Pp$ and $\Pp^!$ respectively. Then, we have the following equality:
    $$ f_\Pp(-f_{\Pp^!}(-t)) = t $$
\end{prp}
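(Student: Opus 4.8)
The plan is to deduce the identity from the acyclicity of the Koszul complex by an Euler--Poincaré argument. Let $\mathcal{D}$ denote the Koszul dual cooperad of $\Pp$ (for the operads at hand, which are binary and quadratic, the weight-$w$ part of $\mathcal{D}$ has arity $w+1$): it is a weight-graded cooperad whose weight-$w$ part is placed in homological degree $w$, and whose underlying spaces satisfy $\dim_\K\mathcal{D}(\underline{n}) = \dim_\K\Pp^!(\underline{n})$ for all $n$, since $\Pp^!$ is obtained from $\mathcal{D}$ by linear duality and a degree shift, neither of which changes dimensions. By definition, $\Pp$ being Koszul means precisely that the Koszul complex---the plethysm $\Pp\circ\mathcal{D}$ equipped with the differential induced by the canonical twisting morphism $\mathcal{D}\to\Pp$---is acyclic, its homology being the singleton species $X$, for which $f_X(t)=t$.

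First I would upgrade the dictionary ``plethysm $\leftrightarrow$ composition of Hilbert series'' to the differential graded setting. For a dg species $\Ss$ define its Euler characteristic series $\chi_\Ss(t)=\sum_{n\in\N}\frac{1}{n!}\big(\sum_i(-1)^i\dim_\K\Ss(\underline{n})_i\big)t^n$. Because taking $\Sym_n$--coinvariants is exact in characteristic $0$ and the Euler characteristic turns direct sums into sums and tensor products into products even with the Koszul sign rule, one has $\chi_{\Ss\circ\R}=\chi_\Ss\circ\chi_\R$ exactly as in the ungraded case, and $\chi$ is moreover invariant under quasi-isomorphism. Applying both facts to the acyclic complex $\Pp\circ\mathcal{D}$, and using that $\Pp$ is concentrated in homological degree $0$ so that $\chi_\Pp=f_\Pp$, yields
$$ f_\Pp\big(\chi_\mathcal{D}(t)\big)=\chi_X(t)=t. $$

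It then remains to compute $\chi_\mathcal{D}$. The arity-$n$ component of $\mathcal{D}$ is its weight-$(n-1)$ part, hence is concentrated in homological degree $n-1$ and has dimension $\dim_\K\Pp^!(\underline{n})$; therefore
$$ \chi_\mathcal{D}(t)=\sum_{n\in\N}\frac{(-1)^{n-1}\dim_\K\Pp^!(\underline{n})}{n!}t^n=-\sum_{n\in\N}\frac{\dim_\K\Pp^!(\underline{n})}{n!}(-t)^n=-f_{\Pp^!}(-t). $$
Substituting into the previous display gives $f_\Pp(-f_{\Pp^!}(-t))=t$, as desired. The step I expect to be the main obstacle is the sign bookkeeping: one must justify carefully that $\chi$ is multiplicative under the Cauchy and plethystic products of dg species, and, above all, pin down that the weight-$w$ piece of $\mathcal{D}$ sits in homological degree $w$ while occupying arity $w+1$, since it is exactly this degree shift that produces the sign $(-1)^{n-1}$ and hence the two reflections $t\mapsto -t$ in the final formula; everything else is the formal transport of the plethysm--composition dictionary from the ungraded to the graded setting.
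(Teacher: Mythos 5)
The paper does not actually prove this proposition: it is quoted as a standard result of operadic Koszul duality theory, with the cited references standing in for the proof, and your argument is exactly the standard one those references give --- the Euler--Poincar\'e characteristic applied to the acyclic Koszul complex $\Pp\circ\mathcal{D}$, multiplicativity of $\chi$ under plethysm (valid in characteristic $0$ by exactness of $\Sym_n$--coinvariants), and the identification $\chi_{\mathcal{D}}(t)=-f_{\Pp^!}(-t)$ coming from the weight/arity/degree bookkeeping. Your proof is correct, including the explicit caveat that the correspondence ``arity $n$ $\Leftrightarrow$ homological degree $n-1$'' presupposes binary generators: the functional equation in this form is really a statement about binary quadratic Koszul operads, which covers every operad to which the paper applies it.
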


As a general tool to show Koszulness of operads, we will use the theory of Gröbner bases for operads. We refer to \cite{GB} for a general introduction to the theory of shuffle operads and Gröbner bases for operads. We will in fact use the more general language of rewriting systems. A \emph{rewriting system} is a set of \emph{writing rules} which are pairs $(a,b)$. Let us write $x\mapsto y$ to denote a \emph{rewriting step} from $x$ to $y$, it corresponds to the application of a writing rule $(a,b)$ to $x$ such that $a$ is a sub-object of $x$ wich is replaced by $b$ to obtain $y$. In the case of operads, we will rewrite linear combinations of \emph{shuffle trees}. Since this is not the main point of this article, we will not give the definition of shuffle trees, we refer to \cite{GB} for a precise definition. Let us write $x\overset{*}{\mapsto} y$ for a finite sequence of rewriting steps from $x$ to $y$. A rewriting system is \emph{terminating} if any sequence of rewriting steps is finite. A rewriting system is \emph{locally confluent} if $y\mapsfrom x\mapsto z$ imply that we have $t$ such that $y\overset{*}{\mapsto}t \overset{*}{\mapsfrom} z$. A rewriting system is \emph{convergent} if it is terminating and locally confluent. Any Gröbner basis gives raise to a convergent rewriting system, moreover, we have a one-to-one correspondence between elements of the Gröbner basis and rewriting rules of the rewriting system. We will use the following result to show Koszulness of operads:

\begin{prp}\label{prp:QGB}
    Let $\Pp$ be an operad. If $\Pp$ admits a quadratic Gröbner basis, then $\Pp$ is Koszul. More generally, if $\Pp$ admits quadratic convergent rewriting system, then $\Pp$ is Koszul.
\end{prp}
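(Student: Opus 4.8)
The plan is to reduce the Koszulness of $\Pp$ to that of its associated monomial operad, following the standard Gröbner-basis route to operadic Koszulness (the operadic analogue of Priddy's theorem). First I would pass from the symmetric operad $\Pp$ to its underlying shuffle operad $\Pp^{\mathrm{f}}$: Koszulness of $\Pp$ is equivalent to Koszulness of $\Pp^{\mathrm{f}}$, and all the relevant combinatorics of shuffle trees and admissible orders lives on the shuffle side. The hypothesis then provides a quadratic Gröbner basis $G$ for the defining ideal of $\Pp^{\mathrm{f}}$ with respect to some admissible ordering of shuffle trees.

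The first key step is the PBW property. By Newman's lemma, a terminating and locally confluent (hence convergent) rewriting system has unique normal forms, so the irreducible shuffle trees — those containing no leading term of $G$ as a subtree — form a linear basis of $\Pp^{\mathrm{f}}$. I would then introduce the leading-term operad $\mathrm{gr}\,\Pp^{\mathrm{f}}$, the monomial shuffle operad presented by the same generators and by the leading monomials of $G$ as relations. The PBW basis shows that $\Pp^{\mathrm{f}}$ and $\mathrm{gr}\,\Pp^{\mathrm{f}}$ share the same normal forms, hence the same Hilbert series; in fact $\mathrm{gr}\,\Pp^{\mathrm{f}}$ is genuinely the associated graded of $\Pp^{\mathrm{f}}$ for the filtration induced by the monomial order. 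Since $G$ is quadratic, all these leading monomials are quadratic, so $\mathrm{gr}\,\Pp^{\mathrm{f}}$ is a quadratic monomial operad.

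Next I would show that every quadratic monomial operad is Koszul. For a monomial operad the Koszul dual is again monomial, and the Koszul (bar/cobar) complex splits as a direct sum indexed by shuffle trees; one then checks by an explicit, purely combinatorial computation that this complex is acyclic off the diagonal, exactly as in the associative case. This yields Koszulness of $\mathrm{gr}\,\Pp^{\mathrm{f}}$.

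Finally I would transfer Koszulness back to $\Pp^{\mathrm{f}}$. The weight filtration realizing $\mathrm{gr}\,\Pp^{\mathrm{f}}$ as the associated graded of $\Pp^{\mathrm{f}}$ induces a filtration on the Koszul complex of $\Pp^{\mathrm{f}}$ whose associated graded is the Koszul complex of $\mathrm{gr}\,\Pp^{\mathrm{f}}$; the resulting spectral sequence converges (the filtration is bounded in each fixed arity and weight), and since the $E_1$-page is concentrated on the diagonal by the previous step, the same holds for $\Pp^{\mathrm{f}}$, which is therefore Koszul, and hence so is $\Pp$. The more general statement for a quadratic convergent rewriting system follows from the one-to-one correspondence between Gröbner bases and convergent rewriting systems recalled above: a convergent system already supplies the normal forms and the quadratic leading monomials needed to run the same argument. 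The main obstacle is precisely this last transfer step — one must verify that the filtration is exhaustive and bounded enough for the spectral sequence to converge, and that the quadratic weight grading is respected, so that collapse on the diagonal at $E_1$ genuinely propagates from $\mathrm{gr}\,\Pp^{\mathrm{f}}$ to $\Pp^{\mathrm{f}}$.
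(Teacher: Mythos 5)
Your proposal is correct: it reproduces the standard Gröbner-basis route to operadic Koszulness (passage to the shuffle operad, unique normal forms from convergence giving a PBW basis, Koszulness of the quadratic monomial associated-graded operad, and transfer back through the filtration spectral sequence), which is precisely the argument in the literature the paper relies on. The paper itself does not prove this proposition but recalls it as a known result from the theory of shuffle operads and Gröbner bases \cite{GB}, so your argument matches the intended proof.
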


Let $\Tt$ be the free operad functor. The operad $\Lie$ is defined by the following presentation:
$$ \Tt\{l\}/\langle l\circ_1 l - l\circ_2 l - (l\circ_1 l).(2\;3)\rangle $$
where $l$ is a binary operation, and the action of $\Sym_2$ on $\K\langle l\rangle$ is given by $l.(1\; 2)=-l$. The operad $\PL$ is defined by the following presentation:
$$ \Tt\{x,y\}/\langle (x\circ_1 x - x\circ_2 x) - (x\circ_1 x - x\circ_2 x).(2\;3)\rangle $$
where $x$ and $y$ are binary operations, and the action of $\Sym_2$ on $\K\langle x,y\rangle$ is given by $x.(1\; 2)=y$. We recall that we have the following injective morphism of operads $\Lie\to\PL$ given by $l\mapsto x-y$.

\begin{dfn}
    A \emph{rooted tree} $\tau$ is a finite graph $(\V,\E)$ without cycles with a distinguished vertex called the \emph{root}. A \emph{labeling} on $\tau$ is a bijective map $l:\V\to L$ with $L$ a set of labels. The species of rooted trees is the species $\RT$ such that $\RT(L)$ is the set of rooted trees labeled by $L$. In the following, rooted trees will always be labeled. Rooted trees will be represented with the root at the bottom.
\end{dfn}

Let us recall the construction of \cite{RT} of an operad structure on the species of rooted trees.

\begin{dfn}
    Let $S$ and $T$ be two rooted trees and $i$ be a label of a vertex $v_i$ of $S$. Let $B$ be the tree bellow the vertex $v_i$ in $S$, and $C=\{C_1,\dots,C_n\}$ be the set of children of $v_i$ in $S$. 
    \[
        \vcenter{\hbox{\begin{tikzpicture}[scale=0.5]     
            \node (S) at (0, 0) {};
            \node[isosceles triangle, isosceles triangle apex angle=60, draw, rotate=270, fill=white!50, minimum size =24pt] (ST) at (S){};
            \draw[circle, fill=black] (ST.east) circle [radius=2pt];
            \node at (S) {\scalebox{0.8}{$S$}};
        \end{tikzpicture}}}
        \qquad
        =
        \qquad
        \vcenter{\hbox{\begin{tikzpicture}[scale=0.5]     
            \node (lv) at (0, 0) {};
            \node (SD) at (0, -3) {};
            \node (SU1) at (-2, 3) {};
            \node (dot) at (0, 3) {};
            \node (SUk) at (2, 3) {};
            \node[isosceles triangle, isosceles triangle apex angle=60, draw, rotate=-90, fill=white!50, minimum size =24pt] (SDT) at (SD){};
            \node[isosceles triangle, isosceles triangle apex angle=60, draw, rotate=270, fill=white!50, minimum size =24pt] (SU1T) at (SU1){};
            \node[isosceles triangle, isosceles triangle apex angle=60, draw, rotate=270, fill=white!50, minimum size =24pt] (SUkT) at (SUk){};
            \draw[thick] (SU1T.east)--(lv);
            \draw[thick] (SUkT.east)--(lv);
            \draw[thick] (lv)--(SDT.west);
            \node at (dot) {$\cdots$};
            \draw[circle, fill=white] (lv) circle [radius=24pt];
            \draw[circle, fill=black] (SDT.east) circle [radius=2pt];
            \draw[circle, fill=black] (SU1T.east) circle [radius=2pt];
            \draw[circle, fill=black] (SUkT.east) circle [radius=2pt];
            \node at (lv) {\scalebox{1}{$v_i$}};
            \node at (SU1) {\scalebox{0.8}{$C_1$}};
            \node at (SUk) {\scalebox{0.8}{$C_n$}};
            \node at (SD) {\scalebox{0.8}{$B$}};
        \end{tikzpicture}}}
        \qquad
        =
        \qquad
        \vcenter{\hbox{\begin{tikzpicture}[scale=0.5]     
            \node (lv) at (0, 0) {};
            \node (SD) at (0, -2.7) {};
            \node (F) at (0, 3) {};
            \node[isosceles triangle, isosceles triangle apex angle=60, draw, rotate=270, fill=white!50, minimum size =24pt] (SDT) at (SD){};
            \node[isosceles triangle, isosceles triangle apex angle=60, draw, rotate=270, fill=white!50, minimum size =24pt] (FT) at (F){};
            \draw[thick, double] (FT.east)--(lv);
            \draw[thick] (lv)--(SDT.west);
            \draw[circle, fill=white] (lv) circle [radius=24pt];
            \draw[circle, fill=black] (FT.east) circle [radius=2pt];
            \draw[circle, fill=black] (SDT.east) circle [radius=2pt];
            \node at (lv) {\scalebox{1}{$v_i$}};
            \node at (F) {\scalebox{0.8}{$C$}};
            \node at (SD) {\scalebox{0.8}{$B$}};
        \end{tikzpicture}}}
    \]
    We use circles to represent vertices, triangles to represent trees or sets of trees and double edges to represent that each tree of the set $C$ is grafted at $v_i$. The \emph{insertion} of $T$ in $S$ at the vertex $i$ denoted $S\circ_i T$ is the formal sum of all possible way to graft the rooted trees $C_1,\dots,C_n$ on vertices of $T$ and then grafting the result on the parent of $v_i$ in $B$.
\end{dfn}

Let us compute the following example:
\[
    \vcenter{\hbox{\begin{tikzpicture}[scale=0.25]     
        \node (3) at (0, 0) {};
        \node (1) at (0, -3) {};
        \draw[thick] (1)--(3);
        \draw[circle, fill=white] (1) circle [radius=24pt];
        \draw[circle, fill=white] (3) circle [radius=24pt];
        \node at (1) {\scalebox{1}{$1$}};
        \node at (3) {\scalebox{1}{$3$}};
    \end{tikzpicture}}} \; \circ_1 \; \vcenter{\hbox{\begin{tikzpicture}[scale=0.25]     
        \node (2) at (0, 0) {};
        \node (1) at (0, -3) {};
        \draw[thick] (1)--(2);
        \draw[circle, fill=white] (1) circle [radius=24pt];
        \draw[circle, fill=white] (2) circle [radius=24pt];
        \node at (1) {\scalebox{1}{$1$}};
        \node at (2) {\scalebox{1}{$2$}};
    \end{tikzpicture}}} \; = \; \vcenter{\hbox{\begin{tikzpicture}[scale=0.25]     
        \node (2) at (0, 0) {};
        \node (1) at (0, -3) {};
        \node (3) at (0, 3) {};
        \draw[thick] (1)--(2);
        \draw[thick] (2)--(3);
        \draw[circle, fill=white] (1) circle [radius=24pt];
        \draw[circle, fill=white] (2) circle [radius=24pt];
        \draw[circle, fill=white] (3) circle [radius=24pt];
        \node at (1) {\scalebox{1}{$1$}};
        \node at (2) {\scalebox{1}{$2$}};
        \node at (3) {\scalebox{1}{$3$}};
    \end{tikzpicture}}} \; + \; \vcenter{\hbox{\begin{tikzpicture}[scale=0.25]     
        \node (2) at (-1.5, 0) {};
        \node (1) at (0, -3) {};
        \node (3) at (1.5, 0) {};
        \draw[thick] (1)--(3);
        \draw[thick] (2)--(1);
        \draw[circle, fill=white] (1) circle [radius=24pt];
        \draw[circle, fill=white] (2) circle [radius=24pt];
        \draw[circle, fill=white] (3) circle [radius=24pt];
        \node at (1) {\scalebox{1}{$1$}};
        \node at (2) {\scalebox{1}{$2$}};
        \node at (3) {\scalebox{1}{$3$}};
    \end{tikzpicture}}}
\]

\begin{prp}\cite{RT}
    The insertions satisfy the parallel and sequential axioms. Hence, they give a structure of operad on the species of rooted trees.
\end{prp}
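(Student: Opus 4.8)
The plan is to verify directly the two operad axioms stated in \cite[Paragraph~5.3.7]{AlgOp} for the partial compositions $\circ_i$: the \emph{sequential} axiom $(S\circ_i T)\circ_j U = S\circ_i(T\circ_j U)$ when $j$ is a label of $T$, and the \emph{parallel} axiom $(S\circ_i T)\circ_j U = (S\circ_j U)\circ_i T$ when $i$ and $j$ are distinct labels of $S$. The $\Sym_n$-equivariance of the insertions is immediate, since they are defined purely through the combinatorics of the trees and never refer to the actual values of the labels. In each axiom both sides are, by definition, formal sums of rooted trees indexed by sets of grafting choices, so the strategy is to produce a bijection between the two indexing sets under which the corresponding labeled trees coincide; summing over this bijection then gives the equality.

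For the sequential axiom, write $C_1,\dots,C_n$ for the children subtrees of $v_i$ in $S$ and $D_1,\dots,D_m$ for the children subtrees of $v_j$ in $T$. In $(S\circ_i T)\circ_j U$ one first grafts each $C_k$ onto some vertex of $T$; in the intermediate tree the children of $v_j$ are the $D_l$ together with those $C_k$ that landed on $v_j$, and $\circ_j U$ then redistributes all of these onto the vertices of $U$. In $S\circ_i(T\circ_j U)$ one first redistributes the $D_l$ onto the vertices of $U$, and then grafts each $C_k$ onto a vertex of $T\circ_j U$, whose vertex set is that of $T$ with $v_j$ removed, augmented by the vertices of $U$. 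Both sides are therefore governed by the same data: a vertex of $(T\setminus\{v_j\})\sqcup U$ chosen for each $C_k$, and a vertex of $U$ chosen for each $D_l$. The matching identifies a $C_k$ grafted onto $v_j$ and later pushed to a vertex $w$ of $U$ with a $C_k$ grafted directly onto $w$ in $T\circ_j U$; under this identification the two formal sums agree term by term.

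For the parallel axiom the two insertions act at distinct vertices of $S$, and the only thing to watch is whether one of these vertices lies in the subtree relocated by the other insertion. If $v_i$ and $v_j$ are incomparable, the two operations modify disjoint parts of $S$ and visibly commute. Otherwise, say $v_j$ is a descendant of $v_i$ (the opposite case being symmetric in $(i,T)$ and $(j,U)$): then $v_j$ belongs to one child subtree $C_k$ of $v_i$, so inserting $T$ at $v_i$ carries the whole of $C_k$, and with it $v_j$ and its children, onto some vertex of $T$, while inserting $U$ at $v_j$ redistributes the children of $v_j$ onto $U$. These are two independent choices---a vertex of $T$ for $C_k$ and a vertex of $U$ for each child of $v_j$---and the grafting of $C_k$ onto $T$ does not interfere with the redistribution happening inside $C_k$. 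Hence performing the two insertions in either order produces the same sum over pairs of choices.

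The hard part is the bookkeeping in the overlapping configurations of each axiom: for the sequential axiom, correctly tracking a child of $v_i$ that is grafted exactly onto $v_j$ and is therefore moved a second time by $\circ_j U$; and for the parallel axiom, handling the case where $v_j$ sits inside a subtree that is relocated by the insertion at $v_i$. Once these cases are matched up by the bijections above, the verification reduces to the elementary fact that grafting subtrees onto disjoint targets is an order-independent operation, which is exactly what the two axioms assert.
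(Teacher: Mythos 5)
Your proof is correct and follows essentially the same route as the cited source \cite{RT} (the paper itself only cites this result): a direct verification of the two axioms by matching the grafting choices indexing each side, which is also exactly how the paper later proves the analogous statement for forests of rooted hypertrees (parallel axiom immediate since the insertions happen at distinct vertices, sequential axiom by tracking where the children subtrees land). The key bookkeeping point you isolate---a child of $v_i$ grafted onto $v_j$ and then redistributed over $U$ corresponds bijectively to grafting it directly onto a vertex of $U$---is precisely the content of that computation.
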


\begin{thm}\cite[Theorem 1.9]{RT}
    The operad $(\RT,\{\circ_i\})$ is isomorphic to $\PL$. Moreover, the isomorphism is given by $ \vcenter{\hbox{\begin{tikzpicture}[scale=0.25]     
        \node (2) at (0, 0) {};
        \node (1) at (0, -3) {};
        \draw[thick] (1)--(2);
        \draw[circle, fill=white] (1) circle [radius=24pt];
        \draw[circle, fill=white] (2) circle [radius=24pt];
        \node at (1) {\scalebox{1}{$1$}};
        \node at (2) {\scalebox{1}{$2$}};
    \end{tikzpicture}}} \mapsto x$.
\end{thm}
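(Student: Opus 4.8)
The plan is to prove the isomorphism by constructing an operad morphism $\phi\colon\PL\to\RT$ and showing it is bijective; the isomorphism of the statement will then be $\phi^{-1}$. Since $\PL$ is generated by the single binary operation $x$ (with $y=x.(1\,2)$), it suffices to set $\phi(x)=c$, where $c\in\RT(\underline 2)$ is the rooted tree with root $1$ and single child $2$, and to check that the image of the defining relation vanishes. I would verify this by the direct computation of $c\circ_1 c$ and $c\circ_2 c$ in $\RT(\underline 3)$ using the insertion rule. The example already displayed in the text is of exactly this shape: $c\circ_1 c$ is the sum of the path $1\!-\!2\!-\!3$ and the cherry with root $1$ and children $2,3$, while $c\circ_2 c$ is the single path $1\!-\!2\!-\!3$ (insertion at the leaf $2$ detaches no children). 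Hence $c\circ_1 c-c\circ_2 c$ is the cherry, which is manifestly fixed by $(2\,3)$ since the two children of the root are interchangeable. This is precisely the symmetry of the associator demanded by the pre-Lie relation, so $\phi$ is a well-defined morphism of operads.

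Next I would prove that $\phi$ is surjective, i.e.\ that every labeled rooted tree lies in the suboperad of $\RT$ generated by $c$. I would argue by induction on the number of vertices: writing a tree $\tau$ as a root carrying maximal subtrees $\tau_1,\dots,\tau_k$, each $\tau_j$ is in the image by induction, and attaching them to the root is modeled by iterated partial compositions with $c$. The subtlety is that each insertion of $c$ produces not a single tree but a \emph{sum} of trees, obtained by regrafting the detached children in all admissible positions, so one does not recover $\tau$ on the nose. I would resolve this by a triangularity argument: with respect to a suitable partial order on the rooted trees sharing a fixed vertex set, $\tau$ appears as the leading term of the relevant composite, all other terms being strictly smaller and already known to lie in the image; peeling off leading terms (a Gaussian-elimination step) then shows every basis tree is hit.

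Finally I would conclude by a dimension count in each arity. On one side, $\dim_\K \RT(\underline n)=n^{n-1}$ by Cayley's formula, or equivalently its Hilbert series $f_\RT$ is the tree function satisfying $f_\RT(t)=t\,e^{f_\RT(t)}$. On the other side, $\PL$ is Koszul with Koszul dual the permutation operad $\Perm$, of Hilbert series $f_\Perm(t)=\sum_{n\geq 1}\tfrac{n}{n!}t^n=t\,e^{t}$; feeding this into Proposition~\ref{prp:KoszulHilbertSeries} yields $f_\PL(t\,e^{-t})=t$, the same functional equation, so $\dim_\K \PL(\underline n)=n^{n-1}$ as well. As $\phi$ is a surjection of species between components of equal finite dimension in every arity, it is an isomorphism in every arity, hence an isomorphism of operads; its inverse sends $c$ to $x$, which is exactly the isomorphism asserted in the theorem.

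I expect the main obstacle to be the surjectivity step, and within it the bookkeeping of the triangularity argument, since insertion is inherently a sum of trees and one must control the non-leading terms via a carefully chosen order. By contrast, the relation check and the Hilbert-series computation are short and essentially forced.
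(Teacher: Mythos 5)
The paper offers no proof of this statement at all: it is quoted from Chapoton--Livernet \cite{RT}, so there is no internal argument to compare yours against, and your proposal must be judged on its own merits. On those merits, it is a genuinely different route from the original one (which shows that rooted trees with grafting realize free pre-Lie algebras, rather than arguing via Koszul duality), and most of it is sound. The relation check is correct: $c\circ_1 c-c\circ_2 c$ is the cherry, which is visibly $(2\;3)$-invariant. Your surjectivity sketch does complete, and the ``suitable partial order'' can be made concrete: order $n$-vertex trees by the number of children of the root and run a double induction, outer on the number of vertices, inner on the number of root children. Writing $\tau$ as $\sigma$ (the root carrying all but one of its subtrees) together with the last subtree $\tau_k$, the composite obtained by inserting $\sigma$ into the root of $c$ and then $\tau_k$ into the remaining leaf equals $\tau$ plus trees in which $\tau_k$ is grafted at a non-root vertex of $\sigma$; those error terms have strictly fewer root children, hence lie in the image by the inner induction, and the base case (one root child) is a single leaf insertion producing no extra terms.

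The one genuine gap is a circularity in your injectivity step. You invoke ``$\PL$ is Koszul with Koszul dual $\Perm$'' as known, but the classical proof of Koszulness of $\PL$ --- in the very same reference \cite{RT} that contains the theorem you are proving --- comes after Theorem~1.9 and uses the rooted-tree description of $\PL$. As written, your dimension count therefore rests on a result whose standard proof presupposes the conclusion. The repair is available within the toolkit of the present paper and leaves your architecture intact: $\Perm=\PL^!$ admits a quadratic Gr\"obner basis (the paper itself records this: the rewriting rules of its Supplementary Figure~1 not involving $c_*$ form a Gr\"obner basis of $\PL^!=\Perm$, see also \cite{Perm}), so $\Perm$ is Koszul by Proposition~\ref{prp:QGB}, hence $\PL$ is Koszul because Koszulness is preserved under Koszul duality --- the same silent step the paper takes in passing from ``$\CPL^!$ is Koszul'' to ``$\CPL$ is Koszul'' --- and only then may you apply Proposition~\ref{prp:KoszulHilbertSeries} to get $\dim\PL(\underline{n})=n^{n-1}$. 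Alternatively, you can avoid Koszul theory entirely by proving $\dim\PL(\underline{n})\le n^{n-1}$ directly: using the pre-Lie relation as a rewriting rule, the tree monomials span the free pre-Lie algebra, and combined with your surjectivity this forces equality of dimensions; that spanning argument is essentially the injectivity mechanism of the original proof in \cite{RT}.
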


Let us state a generalization of this construction. We first need to define a generalization of the rooted trees, namely the rooted Greg trees \cite{Me}.

\begin{dfn}
    A \emph{rooted Greg tree} $\tau$ is a rooted tree with black and white vertices such that the white vertices are labeled, and the black vertices are unlabeled and have at least two children. The \emph{weight} of $\tau$ is the number of black vertices of $\tau$. The species $\RGT$ of rooted Greg trees is the species such that $\RGT(L)$ is the set of rooted Greg trees labeled by $L$. The species $\RGT$ is graded by the weight, in particular $\RGT_0$ is the species of rooted trees. The set $\RGT(\underline{2})$ is depicted in Figure \ref{fig:RGT2}.
\end{dfn}

\begin{figure}[ht]
    \caption{The set $\RGT(\underline{2})$}
    \label{fig:RGT2}
    \[
        \vcenter{\hbox{\begin{tikzpicture}[scale=0.25]
            \draw[thick] (0, 0)--(0, 3);
            \draw[fill=white, thick] (0, 0) circle [radius=24pt];
            \draw[fill=white, thick] (0, 3) circle [radius=24pt];
            \node at (0, 0) {\scalebox{1}{1}};
            \node at (0, 3) {\scalebox{1}{2}};
        \end{tikzpicture}}}
        \quad,\quad
        \vcenter{\hbox{\begin{tikzpicture}[scale=0.25]
            \draw[thick] (0, 0)--(0, 3);
            \draw[fill=white, thick] (0, 0) circle [radius=24pt];
            \draw[fill=white, thick] (0, 3) circle [radius=24pt];
            \node at (0, 0) {\scalebox{1}{2}};
            \node at (0, 3) {\scalebox{1}{1}};
        \end{tikzpicture}}}
        \quad,\quad
        \vcenter{\hbox{\begin{tikzpicture}[scale=0.25]
            \draw[thick] (0, 0)--(-1.5, 3);
            \draw[thick] (0, 0)--(1.5, 3);
            \draw[fill=black, thick] (0, 0) circle [radius=24pt];
            \draw[fill=white, thick] (-1.5, 3) circle [radius=24pt];
            \draw[fill=white, thick] (1.5, 3) circle [radius=24pt];
            \node at (0, 0) { };
            \node at (-1.5, 3) {\scalebox{1}{1}};
            \node at (1.5, 3) {\scalebox{1}{2}};
        \end{tikzpicture}}}
    \]
\end{figure}

\begin{rmk}
    The condition that the black vertices have at least two children ensure that $\RGT(\underline{n})$ is finite for all $n\in\N$. The sequence of dimensions of $\RGT$ is Sequence~\oeis{A005264} of the OEIS \cite{Oeis}, the triangle of its graded dimensions is Sequence~\oeis{A048160}.
\end{rmk}

\begin{dfn}
    Let $S$ and $T$ be two rooted Greg trees and $i$ be a label of a white vertex $v_i$ of $S$. Let $B$ be the hypertree bellow the vertex $v_i$ in $S$, and $C=\{C_1,\dots,C_n\}$ be the set of children of $v_i$ in $S$. The \emph{insertion} of $T$ in $S$ at the vertex $i$ denoted $S\circ_i T$ is the formal sum of all possible way to graft the rooted trees $C_1,\dots,C_n$ on black or white vertices of $T$ and then grafting the result on the parent of $v_i$ in $B$.
\end{dfn}

\begin{prp}\cite{Me}
    The insertions satisfy the parallel and sequential axioms. Hence, they give a structure of operad on the species of rooted Greg trees.
\end{prp}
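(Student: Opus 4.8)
The plan is to verify that the insertion operations defined on rooted Greg trees satisfy the parallel and sequential axioms, following closely the strategy used for ordinary rooted trees in \cite{RT}. Since the proposition for $\RT$ is already established, the most economical approach is to reduce the Greg case to the plain case wherever possible, and to handle the genuinely new feature---the presence of black vertices---separately. First I would recall precisely what the two axioms say: the sequential axiom governs the compatibility of two nested insertions $(S\circ_i T)\circ_j U$ when $j$ is a label appearing in $T$, and the parallel axiom governs the commutation of two insertions $(S\circ_i T)\circ_j U$ when $i$ and $j$ are distinct labels of $S$.

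For the sequential axiom, the key observation is that insertion of $T$ at a white vertex $v_i$ of $S$ redistributes the children $C_1,\dots,C_n$ of $v_i$ onto the vertices (black or white) of $T$, and then reattaches the whole to the parent of $v_i$. When we subsequently insert $U$ at a white vertex $v_j$ of $T$, the children of $v_j$ in the intermediate tree consist of the original children of $v_j$ together with whichever of the $C_k$ were grafted onto $v_j$ in the first step. I would argue that summing over all distributions in the first insertion and then over all distributions in the second produces the same formal sum as performing the grafting in the order dictated by the right-hand side of the sequential axiom; this is a bijection between the two sets of grafting data, exactly as in the plain rooted tree case, with the only addition being that black vertices of $T$ are now also admissible grafting targets. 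The point to check carefully here is that the constraint ``black vertices have at least two children'' is never violated: since insertion only \emph{adds} children to existing vertices of $T$ and never removes any, a black vertex with at least two children retains at least two children, so all trees appearing in the formal sums are genuine rooted Greg trees.

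For the parallel axiom, I would note that when $i$ and $j$ are distinct white labels of $S$ with neither below the other, the two insertions act on disjoint regions of $S$, so they commute term by term; the only subtlety arises when $v_j$ lies in the subtree $B$ below $v_i$ or vice versa, but in that case the reattachment steps are still independent because each insertion only rearranges the children of its own target vertex. Again the black-vertex arity condition is preserved for the same reason as above, and the equivariance of the maps under relabeling bijections is immediate from the construction, as insertion is defined combinatorially without reference to a particular ordering of labels.

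The main obstacle, and the only place where the argument genuinely differs from \cite{RT}, is bookkeeping the distributions of the $C_k$ onto black vertices and confirming that the resulting objects remain valid rooted Greg trees throughout the nested and parallel compositions; I expect this to be routine once the bijection of grafting data is set up, since the defining conditions on Greg trees (black vertices unlabeled with at least two children) are stable under the grafting operation. One clean way to organize the entire proof would be to exhibit a compatible forgetful or inclusion relationship with the plain rooted tree operad $\RT$ on the weight-zero part and then track the extra grafting choices contributed by black vertices as an enlargement of the index set of the formal sum, so that both axioms follow from the corresponding identities of index sets.
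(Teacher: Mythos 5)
Your argument is correct and takes essentially the same route as the paper's treatment of this statement (which is quoted from \cite{Me}; the paper's own proofs of the analogous propositions for forests of rooted hypertrees follow the identical pattern): the parallel axiom is immediate since the two insertions manipulate disjoint grafting data, and the sequential axiom comes from the bijection between the two ways of distributing the children subtrees, exactly as for ordinary rooted trees, the only Greg-specific point being that grafting only adds children, so the condition that black vertices have at least two children is preserved. No gaps to report.
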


\begin{thm}\cite[Corollary 3.3]{Me}
    The operad $(\RGT,\{\circ_i\})$ is Koszul. Moreover, it admits the following binary quadratic presentation:
    \begin{multline*}
        \Tt\{x,y,g\}/\langle (x\circ_1 x - x\circ_2 x) - (x\circ_1 x - x\circ_2 x).(2\;3),\\ (x\circ_1 g - (g\circ_1 x).(2\;3) - g\circ_2 x) - (x\circ_1 g - (g\circ_1 x).(2\;3) - g\circ_2 x).(2\;3)\rangle ,
    \end{multline*}
    where $x$, $y$ and $g$ are operations of arity $2$, and the action of $\Sym_2$ on $\K\langle x,y,g\rangle$ is given by $x.(1\; 2)=y$ and $g.(1\;2)=g$. The isomorphism is given by $ \vcenter{\hbox{\begin{tikzpicture}[scale=0.25]     
        \node (2) at (0, 0) {};
        \node (1) at (0, -3) {};
        \draw[thick] (1)--(2);
        \draw[circle, fill=white] (1) circle [radius=24pt];
        \draw[circle, fill=white] (2) circle [radius=24pt];
        \node at (1) {\scalebox{1}{$1$}};
        \node at (2) {\scalebox{1}{$2$}};
    \end{tikzpicture}}} \mapsto x$ and $\vcenter{\hbox{\begin{tikzpicture}[scale=0.25]
        \draw[thick] (0, 0)--(-1.5, 3);
        \draw[thick] (0, 0)--(1.5, 3);
        \draw[fill=black, thick] (0, 0) circle [radius=24pt];
        \draw[fill=white, thick] (-1.5, 3) circle [radius=24pt];
        \draw[fill=white, thick] (1.5, 3) circle [radius=24pt];
        \node at (0, 0) { };
        \node at (-1.5, 3) {\scalebox{1}{$1$}};
        \node at (1.5, 3) {\scalebox{1}{$2$}};
    \end{tikzpicture}}} \mapsto g$.
\end{thm}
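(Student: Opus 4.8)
Write $\Pp$ for the operad defined by the displayed quadratic presentation, and let $\phi\colon\Pp\to(\RGT,\{\circ_i\})$ be the morphism of operads determined by sending $x$ to the labelled edge and $g$ to the black vertex carrying two white children. The plan is to show that $\phi$ is a well-defined isomorphism while simultaneously exhibiting a quadratic convergent rewriting system for $\Pp$, so that Koszulness is delivered by Proposition~\ref{prp:QGB} and then transported along $\phi$ to $\RGT$. Thus there are three things to establish: that the relators die in $\RGT$, that $\phi$ is surjective, and that $\dim\Pp(\underline n)\le\dim\RGT(\underline n)$.

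First I would check that $\phi$ is well defined, i.e.\ that the two families of relators map to $0$ in $\RGT$. This is a finite computation in arity $3$: one expands the relevant insertions of the generators and reads off the resulting rooted Greg trees. The first relator is the pre-Lie relator, and it vanishes because it already does so inside $\RGT_0\cong\PL$, the rooted-tree operad of \cite{RT} (insertions of white vertices into white vertices are exactly the rooted-tree insertions). The second relator records the compatibility between grafting a white vertex below the black vertex ($x\circ_1 g$) and grafting it onto one of the two children or onto the black vertex itself ($g\circ_2 x$ and $(g\circ_1 x).(2\,3)$); expanding $x\circ_1 g$ one sees in particular the ternary black vertex appear, and a term-by-term comparison shows the asserted $(2\,3)$-symmetry. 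The same expansion is what drives surjectivity: since the ternary black vertex equals $x\circ_1 g$ minus terms already built from $x$ and $g$, every black vertex of higher valence can be solved for inductively, and an induction on the number of vertices then shows that $x$ and $g$ generate all of $\RGT$.

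The core of the argument, and the step I expect to be the main obstacle, is the upper bound $\dim\Pp(\underline n)\le\dim\RGT(\underline n)$. Here I would pass to the shuffle-operad setting, fix a path-lexicographic monomial order on shuffle trees, and orient the two relators into rewriting rules so that, say, $x\circ_2 x$, $g\circ_2 x$ and $(g\circ_1 x).(2\,3)$ become the leading terms. Termination is immediate from the order; local confluence is the delicate point, and it reduces to checking that every critical pair arising from an overlap of two rules in arity $4$ resolves to a common normal form. There are only finitely many such overlaps, coming from the monomials built on the generator triples $(x,x,x)$, $(x,x,g)$ and $(x,g,g)$, but the overlaps mixing the $x$--$g$ rule with itself or with the second copy of $g$ are where the sign conventions and the $(2\,3)$-symmetrisations make the bookkeeping genuinely delicate. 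Establishing convergence certifies that the relators form a quadratic Gröbner basis, so that the shuffle monomials avoiding the three leading terms form a basis of $\Pp$.

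It then remains to match the count of these normal monomials with the dimensions of $\RGT$. I would exhibit a weight-preserving bijection between normal monomials and rooted Greg trees (the normal form of a tree monomial reading off a canonical planar representative), giving $\dim\Pp(\underline n)=\dim\RGT(\underline n)$ in every arity, consistent with Sequence~\oeis{A005264} and its graded refinement \oeis{A048160}. Combined with the surjectivity of $\phi$ this forces $\phi$ to be an isomorphism; alternatively one may observe that surjectivity already gives $\dim\Pp\ge\dim\RGT$, so the two bounds sandwich to equality and make the confluence verification and the isomorphism simultaneous. Finally, Proposition~\ref{prp:QGB} yields Koszulness of $\Pp$, hence of $(\RGT,\{\circ_i\})$, completing the proof.
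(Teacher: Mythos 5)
You should first note that the paper itself contains no proof of this statement: it is imported verbatim from \cite[Corollary 3.3]{Me}, so there is no internal argument to compare yours against; the closest things in the paper are its own proofs of the analogous results for $\CPL$ and $\CG$. Judged against those, your plan is sound and its concrete claims check out: the arity-$3$ computation indeed shows that $x\circ_1 g - (g\circ_1 x).(2\;3) - g\circ_2 x$ equals the ternary black corolla (hence is $(2\;3)$-invariant, killing the second relator), and the same identity lets one solve for $g_{n+1}$ in terms of $g_n$ and $x$, giving surjectivity exactly as you sketch. Where your route genuinely differs from the paper's methodology is the side on which the rewriting theory is done: for $\CPL$ and $\CG$ the paper never builds a rewriting system for the operad itself, but for the Koszul dual ($\CPL^!$, $\CG^!$), where dimensions are small, normal forms are easy to count, and an explicit algebra ($LC(X)$, $LCA(X)$) supplies the lower bound that certifies the Gröbner basis; you instead work directly on the operad, matching normal monomials against rooted Greg trees. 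Both are covered by Proposition~\ref{prp:QGB}, and your closing observation --- that surjectivity onto $\RGT$ gives the lower bound, so equality of counts certifies confluence without resolving critical pairs --- is precisely the paper's own trick, transplanted from the dual side; the dual approach buys small, hand-checkable numbers (e.g.\ $24$ and $27$ in arity $4$), while your direct approach buys an immediate identification of the underlying species but requires the full Greg-tree bijection. Two caveats keep this a proposal rather than a proof. First, both decisive ingredients are deferred: the explicit terminating quadratic rewriting system and the bijection between its normal monomials and rooted Greg trees. Second, a small but real imprecision: in arity $3$ the relation space has dimension $5$ (the free operad on $\{x,y,g\}$ has dimension $27$ there, $\dim\RGT(\underline{3})=22$; the pre-Lie orbit accounts for $3$ dimensions and the orbit of the second relator for $2$), so a shuffle-level rewriting system needs five oriented rules with five leading monomials, not the three you list --- your count is right for the $g$-relator but short by two for the pre-Lie part.
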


\section{The operad $\greg$ and the operadic twisting of $\PL$}\label{sec:greg}

Let us relate the operad $\greg$ to the operadic twisting of $\PL$. We will use the cohomological convention for the degree, hence a differential is a map a degree $1$ that square to zero. Let us describe the operadic twisting, we refer to \cite{TW} for a general introduction to the operadic twisting. Let $\g$ be a differential graded Lie algebra, a Maurer-Cartan element of $\g$ is a degree $1$ element $\alpha\in\g$ such that $d\alpha+\frac{1}{2}[\alpha,\alpha]=0$. This condition ensure that the map $d^\alpha = d+[\alpha,\cdot]$ is a differential on $\g$.

\begin{dfn}
    Let $\Pp$ be an operad. The pre-Lie algebra $(\g,\star)$ associated to $\Pp$ is the graded vector space $\g=\bigoplus_{n\in\N}\Pp(\underline{n})$ with the product $\star$ defined by:
    $$ \mu\star\nu = \sum_{i=1}^n \mu\circ_i\nu $$
    In particular, this is a Lie algebra. If the operad is differential graded, $(\g,\star)$ is a differential graded pre-Lie algebra. The Maurer-Cartan equation can be written as:
    $$ d\mu + \mu\star\mu =0 $$
    This impose that $\mu$ is an arity $1$, degree $1$ element, hence $\mu\star\mu=\mu\circ_1\mu$. Such an element is called an \emph{operadic Maurer-Cartan element}.
\end{dfn}

\begin{dfn}
    Let $\Pp$ be an operad, and $\varphi:\Lie\to\Pp$ a morphism of operads from the operad $\Lie$ to $\Pp$ and $\tilde{l}$ the image of $l$ in $\Pp$. The operadic twisting of $\Pp$ by $\varphi$ is the differential graded operad $(\Tw\Pp,d_\Tw)$ defined by as follows:
    \begin{itemize}
        \item Let $\alpha$ be a formal Maurer-Cartan element, $\alpha$ is an arity $0$, degree $1$ operation symbol.
        \item Let $\Tw\Pp=\Pp\hat{\vee}\alpha$ be the operad $\Pp$ extended by the operation symbol $\alpha$ without any relation. The symbol $\vee$ denotes the coproduct in the category of operads. We use the notation $\hat{\vee}$ since we need complete to the operad $\Pp\vee\alpha$ because of the appearance of potentially infinite sums in the general theory developed in \cite{TW}. In our case, this technicality is not relevant.
        \item The differential $d_\MC$ is defined by: $d_\MC(\alpha)=-\frac{1}{2}\tilde{l}(\alpha,\alpha)$ which is $-\frac{1}{2}(\tilde{l}\circ_1\alpha)\circ_2\alpha$ when written with the partial compositions, and for any $p\in\Pp$, $d_\MC(p)=0$. It extends to the whole operad $\Tw\Pp$ by compatibility with the composition. With this differential, any $(\Tw\Pp,d_\MC)$--algebra is a graded differential $\Pp$--algebra, with a marked Maurer-Cartan element which is the image of $\alpha$. 
        \item Let $\mu$ be an operadic Maurer-Cartan element of $(\Tw\Pp,d_\MC)$. The operadic Maurer-Cartan equation ensure that the map $d_\MC + \mu\star\cdot - \cdot\star\mu$ is a differential on $\Tw\Pp$. The differential $d_\Tw$ is defined by $d_\Tw = d_\MC + \mu\star\cdot - \cdot\star\mu$ with $\mu=\tilde{l}(\alpha,\cdot)$ which is an operadic Maurer-Cartan element.
    \end{itemize}
    We refer to \cite{TW} for the general theory of the operadic twisting.
\end{dfn}   

Let us now explicitly describe the operadic twisting of $\PL$ by the morphism $\varphi:\Lie\to\PL$ given by $\varphi(l)=x-y$, using the combinatorial description of $\PL$. Let $\alpha$ be a formal Maurer-Cartan element, $\alpha$ is an arity $0$, degree $1$ operation symbol, let us denote it by a black vertex. Then, the underlying species of $\PL\hat{\vee}\alpha$ is the species of rooted trees with black and white vertex such that white vertices are labeled, black vertices are unlabeled, and such that they have no non-trivial tree automorphism. As example, the element 
$$ ((x\circ_1 x- x\circ_2 x)\circ_2 \alpha)\circ_2\alpha $$
would be represented by the rooted tree with the white vertex as the root having two black children. However, computations show that this element is equal to its opposite, hence is zero. This species contain the species of rooted Greg trees as a subspecies, however, it is infinite dimensional in each arity. The differential $d_\MC$ is defined by $d_\MC(\alpha)=-\frac{1}{2}\tilde{l}(\alpha,\alpha)$, hence, $d_\MC(\alpha)=-\frac{1}{2}(x(\alpha,\alpha)-y(\alpha,\alpha))=-x(\alpha,\alpha)$ by the Koszul sign rule. The vector space of arity $1$ degree $1$ elements is spanned by $x\circ_1\alpha$ and $x\circ_2\alpha$, let us compute their differential:
$$ d_\MC(x\circ_1\alpha) = x\circ_1d_\MC(\alpha)=-x\circ_1((x\circ_1\alpha)\circ_2\alpha) $$
$$ d_\MC(x\circ_2\alpha) = x\circ_2d_\MC(\alpha)=-x\circ_2((x\circ_1\alpha)\circ_2\alpha) $$
Those computations can be represented using rooted trees, see Figure~\ref{fig:DiffMC}, however one need to be careful with the order in which the black vertices are ``filled'', indeed $(x\circ_1\alpha)\circ_2\alpha$ and $(x\circ_2\alpha)\circ_1\alpha$ are represented by the same rooted tree but have opposite signs, we use the convention bottom to top and left to right.

\begin{figure}
    \caption{Example of computation of $d_\MC$ on some trees.}
    \label{fig:DiffMC}
    \[
    d_\MC\;\vcenter{\hbox{\begin{tikzpicture}[scale=0.25]     
        \node (3) at (0, 0) {};
        \draw[circle, fill=black] (3) circle [radius=24pt];
    \end{tikzpicture}}}=-\vcenter{\hbox{\begin{tikzpicture}[scale=0.25]     
        \node (3) at (0, 0) {};
        \node (1) at (0, -3) {};
        \draw[thick] (1)--(3);
        \draw[circle, fill=black] (1) circle [radius=24pt];
        \draw[circle, fill=black] (3) circle [radius=24pt];
    \end{tikzpicture}}}\quad;\quad
    d_\MC\;\vcenter{\hbox{\begin{tikzpicture}[scale=0.25]     
        \node (3) at (0, 0) {};
        \node (1) at (0, -3) {};
        \draw[thick] (1)--(3);
        \draw[circle, fill=white] (3) circle [radius=24pt];
        \draw[circle, fill=black] (1) circle [radius=24pt];
        \node at (3) {\scalebox{1}{$1$}};
    \end{tikzpicture}}}=-\;\vcenter{\hbox{\begin{tikzpicture}[scale=0.25]     
        \node (3) at (0, 0) {};
        \node (1) at (0, -3) {};
        \node (2) at (0, -6) {};
        \draw[thick] (1)--(3);
        \draw[thick] (2)--(3);
        \draw[circle, fill=white] (3) circle [radius=24pt];
        \draw[circle, fill=black] (1) circle [radius=24pt];
        \draw[circle, fill=black] (2) circle [radius=24pt];
        \node at (3) {\scalebox{1}{$1$}};
    \end{tikzpicture}}}-\vcenter{\hbox{\begin{tikzpicture}[scale=0.25]     
        \node (3) at (-1.5, 0) {};
        \node (1) at (0, -3) {};
        \node (2) at (1.5, 0) {};
        \draw[thick] (1)--(3);
        \draw[thick] (2)--(1);
        \draw[circle, fill=white] (2) circle [radius=24pt];
        \draw[circle, fill=black] (1) circle [radius=24pt];
        \draw[circle, fill=black] (3) circle [radius=24pt];
        \node at (2) {\scalebox{1}{$1$}};
    \end{tikzpicture}}}\quad;\quad
    d_\MC\;\vcenter{\hbox{\begin{tikzpicture}[scale=0.25]     
        \node (1) at (0, 0) {};
        \node (3) at (0, -3) {};
        \draw[thick] (1)--(3);
        \draw[circle, fill=white] (3) circle [radius=24pt];
        \draw[circle, fill=black] (1) circle [radius=24pt];
        \node at (3) {\scalebox{1}{$1$}};
    \end{tikzpicture}}}=-\;\vcenter{\hbox{\begin{tikzpicture}[scale=0.25]     
        \node (2) at (0, 0) {};
        \node (1) at (0, -3) {};
        \node (3) at (0, -6) {};
        \draw[thick] (1)--(3);
        \draw[thick] (2)--(3);
        \draw[circle, fill=white] (3) circle [radius=24pt];
        \draw[circle, fill=black] (1) circle [radius=24pt];
        \draw[circle, fill=black] (2) circle [radius=24pt];
        \node at (3) {\scalebox{1}{$1$}};
    \end{tikzpicture}}}
    \]
\end{figure}

\begin{figure}
    \caption{Composition of the arity $1$ degree $1$ elements.}
    \label{fig:Compo11}
    \[
        (x\circ_1\alpha)\circ_1(x\circ_1\alpha) = \;\vcenter{\hbox{\begin{tikzpicture}[scale=0.25]     
            \node (3) at (0, 0) {};
            \node (1) at (0, -3) {};
            \node (2) at (0, -6) {};
            \draw[thick] (1)--(3);
            \draw[thick] (2)--(3);
            \draw[circle, fill=white] (3) circle [radius=24pt];
            \draw[circle, fill=black] (1) circle [radius=24pt];
            \draw[circle, fill=black] (2) circle [radius=24pt];
            \node at (3) {\scalebox{1}{$1$}};
        \end{tikzpicture}}}\quad;\quad (x\circ_2\alpha)\circ_1(x\circ_1\alpha)= -\;\vcenter{\hbox{\begin{tikzpicture}[scale=0.25]     
            \node (2) at (0, 0) {};
            \node (3) at (0, -3) {};
            \node (1) at (0, -6) {};
            \draw[thick] (1)--(3);
            \draw[thick] (2)--(3);
            \draw[circle, fill=white] (3) circle [radius=24pt];
            \draw[circle, fill=black] (1) circle [radius=24pt];
            \draw[circle, fill=black] (2) circle [radius=24pt];
            \node at (3) {\scalebox{1}{$1$}};
        \end{tikzpicture}}}-\vcenter{\hbox{\begin{tikzpicture}[scale=0.25]     
            \node (3) at (-1.5, 0) {};
            \node (1) at (0, -3) {};
            \node (2) at (1.5, 0) {};
            \draw[thick] (1)--(3);
            \draw[thick] (2)--(1);
            \draw[circle, fill=white] (2) circle [radius=24pt];
            \draw[circle, fill=black] (1) circle [radius=24pt];
            \draw[circle, fill=black] (3) circle [radius=24pt];
            \node at (2) {\scalebox{1}{$1$}};
        \end{tikzpicture}}}
    \]
    \[
        (x\circ_1\alpha)\circ_1(x\circ_2\alpha) = \;\vcenter{\hbox{\begin{tikzpicture}[scale=0.25]     
            \node (1) at (0, 0) {};
            \node (3) at (0, -3) {};
            \node (2) at (0, -6) {};
            \draw[thick] (1)--(3);
            \draw[thick] (2)--(3);
            \draw[circle, fill=white] (3) circle [radius=24pt];
            \draw[circle, fill=black] (1) circle [radius=24pt];
            \draw[circle, fill=black] (2) circle [radius=24pt];
            \node at (3) {\scalebox{1}{$1$}};
        \end{tikzpicture}}}\quad;\quad (x\circ_2\alpha)\circ_1(x\circ_2\alpha)= -\;\vcenter{\hbox{\begin{tikzpicture}[scale=0.25]     
            \node (2) at (0, 0) {};
            \node (1) at (0, -3) {};
            \node (3) at (0, -6) {};
            \draw[thick] (1)--(3);
            \draw[thick] (2)--(3);
            \draw[circle, fill=white] (3) circle [radius=24pt];
            \draw[circle, fill=black] (1) circle [radius=24pt];
            \draw[circle, fill=black] (2) circle [radius=24pt];
            \node at (3) {\scalebox{1}{$1$}};
        \end{tikzpicture}}}
    \]
\end{figure}

Let us find a Maurer-Cartan element. We need to compute $(x\circ_i\alpha)\circ_1(x\circ_j\alpha)$ with $i,j\in\{1,2\}$, see Figure~\ref{fig:Compo11}. Since an operadic Maurer-Cartan element is a degree one arity one element, this shows that the unique operadic Maurer-Cartan element (up to multiplication by a scalar) is $\mu=(x\circ_1\alpha)-(x\circ_2\alpha)$. This allows to describe the differential $d_\Tw$ on the rooted trees:

\begin{prp}\cite[Subsection~6.7]{PLTW}\label{prp:descrdGreg}
    Let $T$ be a rooted tree with black and white vertex, then $d_\Tw(T)$ is given by:
    \begin{enumerate}
        \item The sum of all possible ways to split a white vertex of $T$ into a white vertex retaining the label and a black vertex above it and to connect the incoming edges to one of the two vertices, taken with a minus sign.
        \item The sum of all possible ways to split a white vertex of $T$ into a white vertex retaining the label and a black vertex bellow it and to connect the incoming edges to one of the two vertices, taken with a minus sign.
        \item The sum of all possible ways to split a black vertex of $T$ into two black vertices and to connect the incoming edges to one of the two vertices, taken with a plus sign.
        \item The sum of all possible ways to graft an additional black leaf to $T$, taken with a plus sign.
        \item Grafting $T$ on top of a new black root, taken with a minus sign.
    \end{enumerate}
    Moreover, many terms cancel due to the signs. In particular, if $T$ has more than one vertex, all contributions from $4$ and $5$ get cancelled by contributions from $1$, $2$ and $3$.
\end{prp}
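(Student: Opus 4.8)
The plan is to evaluate the differential $d_\Tw = d_\MC + \mu\star\cdot - \cdot\star\mu$ term by term on an arbitrary rooted tree $T$ with black and white vertices, using the explicit operadic Maurer--Cartan element $\mu = (x\circ_1\alpha) - (x\circ_2\alpha)$ found above together with the insertion rule describing the partial compositions of $\RT\cong\PL$. The point is that each of the three summands is responsible for a definite subset of the five listed operations: $d_\MC$ will produce (3), the term $\mu\star\cdot$ will produce (4) and (5), and the term $-\cdot\star\mu$ will produce (1) and (2). The final cancellation assertion will then come from matching the generic terms of (1)--(3) against extremal (degenerate) terms.

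First I would treat the two bracket terms. Since $\mu$ has arity $1$, one has $\mu\star T = \mu\circ_1 T$. Unwinding the two summands of $\mu$ as rooted trees, $x\circ_1\alpha$ is a black root carrying one white leaf and $x\circ_2\alpha$ is a white root carrying one black leaf. Inserting $T$ at the single white vertex of $x\circ_1\alpha$ grafts $T$ on top of a new black root, which is operation (5); inserting $T$ at the white root of $x\circ_2\alpha$ forces the former black leaf to be distributed over all vertices of $T$, which is operation (4). Dually, $T\star\mu = \sum_{v} T\circ_v\mu$ ranges over the white vertices $v$ of $T$: inserting $x\circ_2\alpha$ at $v$ keeps a white vertex with the label of $v$ and places a black vertex above it, distributing the former children of $v$ between the two, which is operation (1), while inserting $x\circ_1\alpha$ gives the same splitting with the black vertex below $v$, which is operation (2).

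It remains to compute $d_\MC(T)$. As $d_\MC$ is a derivation vanishing on the white generators with $d_\MC(\alpha) = -\tilde{l}(\alpha,\alpha) = -x(\alpha,\alpha)$, applying it at each black vertex $b$ of $T$ replaces $b$ by two stacked black vertices and distributes the children of $b$ over the two in all possible ways; this is operation (3). Collecting the three computations, together with the sign $-1$ in front of $\cdot\star\mu$, reproduces the five signed operations of the statement.

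The delicate part, and the one I expect to cost the most effort, is the sign bookkeeping: the Koszul rule introduces signs whenever the degree-one symbols $\alpha$ and $\mu$ are moved past one another or past $T$, and one must fix once and for all the bottom-to-top, left-to-right filling order already used in Figure~\ref{fig:DiffMC} and Figure~\ref{fig:Compo11} in order to make the signs of (1)--(5) come out as stated. With the signs pinned down, the cancellation becomes a matter of identifying the degenerate terms: the term of (1) in which the new upper black vertex receives none of the children of $v$ is precisely a black leaf grafted at the white vertex $v$, the term of (3) in which the new upper black vertex receives no children is a black leaf grafted at the black vertex $b$, and the term of (2) applied at the root of $T$ in which the new lower black vertex receives no children is exactly $T$ grafted on a black root. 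The Koszul sign attached to creating a second degree-one black vertex makes each of these opposite to the matching term of (4), respectively (5), so that every term of (4) and (5) is cancelled once $T$ has at least two vertices; I would finish by checking the one-vertex base cases directly against the figures, which is where these degenerate terms instead survive.
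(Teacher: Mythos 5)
The paper never proves this proposition: it is quoted as known from \cite[Subsection~6.7]{PLTW}, with the sign conventions relegated to Remark~\ref{rmk:sign}, so there is no internal proof to compare yours against. That said, the skeleton of your argument is the standard (and surely intended) verification, and its structural content is correct: with $\mu=(x\circ_1\alpha)-(x\circ_2\alpha)$, the summand $\mu\star T=\mu\circ_1T$ produces the new-black-root term and the black-leaf terms, the summand involving $T\star\mu=\sum_v T\circ_v\mu$ produces the two white-vertex splittings (with $x\circ_2\alpha$ giving the black vertex above $v$ and $x\circ_1\alpha$ giving it below), and $d_\MC$, as a derivation vanishing on $\PL$ with $d_\MC(\alpha)=-x(\alpha,\alpha)$, produces the black-vertex splittings. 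Your pairing for the cancellation claim --- (4) at a white vertex against the degenerate term of (1), (4) at a black vertex against the degenerate term of (3), and (5) against the degenerate term of (2) at the root --- is also exactly the right bookkeeping.

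The genuine gap is the point you flag and then wave through: the signs, which here are not a routine afterthought but the entire content of the statement. Observe that (3) and (4) are both ``taken with a plus sign'', and (2) and (5) are both ``taken with a minus sign''; so the asserted cancellations are literally invisible at the level of the stated signs and hold only because the matching terms, once rewritten in the canonical bottom-to-top, left-to-right filling order of the black vertices, acquire opposite Koszul signs. Your sentence ``the Koszul sign attached to creating a second degree-one black vertex makes each of these opposite to the matching term'' is therefore precisely the claim that needs proof, not a proof of it; a complete argument must fix the filling convention, carry each family of terms to canonical form (e.g.\ $(x\circ_1\alpha)\circ_1T=(-1)^{|T|}(x\circ_2T)\circ_1\alpha$, followed by a further $(-1)^{|T|}$ to move the root $\alpha$ to the front), and only then compare coefficients. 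Two smaller inaccuracies: $d_\MC(\alpha)=-\tfrac12\tilde l(\alpha,\alpha)$, not $-\tilde l(\alpha,\alpha)$ (the factor $2$ from $y(\alpha,\alpha)=-x(\alpha,\alpha)$ is what removes the $\tfrac12$); and your closing remark that in the one-vertex cases the degenerate terms ``survive'' is only half right --- for the single white vertex everything must still cancel, since $d_\Tw$ of the operadic unit is zero, and it is only for the single black vertex $\alpha$ that a term survives.
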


\begin{rmk}\label{rmk:sign}
    The signs given in the previous proposition depend on the order in which the black vertices are ``filled''. In this description, we assume that the newly created black vertex is filled first. The signs created when changing the ordering can be computed using the Koszul sign rule.
\end{rmk}

A direct computation show that:

\begin{prp}
    Let us denote $g=-d_\Tw(x)$ then we have that:
    $$g=\vcenter{\hbox{\begin{tikzpicture}[scale=0.25]     
            \node (3) at (-1.5, 0) {};
            \node (1) at (0, -3) {};
            \node (2) at (1.5, 0) {};
            \draw[thick] (1)--(3);
            \draw[thick] (2)--(1);
            \draw[circle, fill=white] (2) circle [radius=24pt];
            \draw[circle, fill=black] (1) circle [radius=24pt];
            \draw[circle, fill=white] (3) circle [radius=24pt];
            \node at (2) {\scalebox{1}{$2$}};
            \node at (3) {\scalebox{1}{$1$}};
        \end{tikzpicture}}}$$
    Moreover, $x$ and $g$ satisfies the relation:
    $$(x\circ_1 g - (g\circ_1 x).(2\;3) - g\circ_2 x) - (x\circ_1 g - (g\circ_1 x).(2\;3) - g\circ_2 x).(2\;3)$$
\end{prp}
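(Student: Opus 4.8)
The plan is to establish both assertions by direct computation in the pre-Lie algebra $(\g,\star)$ associated to $\Tw\PL$, reading off each partial composition from the insertion rule on rooted trees.

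First I would compute $d_\Tw(x)$. Since $x$ lies in $\PL$ we have $d_\MC(x)=0$, so $d_\Tw(x)=\mu\star x-x\star\mu$ with $\mu=(x\circ_1\alpha)-(x\circ_2\alpha)$; as $\mu$ has arity $1$, expanding the star products into partial compositions gives $d_\Tw(x)=\mu\circ_1 x-x\circ_1\mu-x\circ_2\mu$. I would then substitute $\mu$ and evaluate each of the six partial compositions $(x\circ_j\alpha)\circ_1 x$ and $x\circ_i(x\circ_j\alpha)$ by the insertion rule, each being a sum over graftings of the displaced white leaf onto the vertices, black or white, of the inserted or host tree. Laying the six terms side by side, every tree carrying a black leaf, a black vertex with a single white child, or a linear black--white--white branch occurs with both signs and cancels; the only survivor is the tree with a black root carrying the two white leaves $1$ and $2$, with overall sign $-1$. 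This gives $d_\Tw(x)=-g$, hence $g=-d_\Tw(x)$, and reads off the displayed picture. Alternatively, one may feed the two-vertex tree $x$ into the combinatorial description of $d_\Tw$ of Proposition~\ref{prp:descrdGreg} and check the same cancellation.

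Next I would verify the relation, writing $g$ for the black corolla on the leaves $1,2$. Inserting $x$ into either white leaf of $g$ lengthens that branch without introducing a sum, so $g\circ_1 x$ and $g\circ_2 x$ are each a single tree, and $(g\circ_1 x).(2\,3)$ is the tree whose branch through leaf $1$ carries the extra leaf $3$. On the other hand $x\circ_1 g$ replaces the root of $x$ by $g$ and grafts the displaced leaf $3$ onto each of the three vertices of $g$ in turn, producing three trees: the two in which $3$ hangs below leaf $1$ or below leaf $2$, together with the fully symmetric black corolla on $\{1,2,3\}$. Subtracting, the two branched trees cancel exactly against $(g\circ_1 x).(2\,3)$ and $g\circ_2 x$, leaving $x\circ_1 g-(g\circ_1 x).(2\,3)-g\circ_2 x$ equal to the single black corolla on $\{1,2,3\}$. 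Since that tree is invariant under $(2\,3)$, antisymmetrizing it under $(2\,3)$ yields zero, which is precisely the asserted relation.

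The main obstacle is the sign bookkeeping in the first part: because $\alpha$, and hence $\mu$, carries homological degree $1$, the Koszul sign rule governs the star products, and one must fix the order in which black vertices are filled (Remark~\ref{rmk:sign}) in order to compare trees consistently and see the cancellations. Once the shape $g=-d_\Tw(x)$ is pinned down, the second part is effectively sign-free: every tree occurring there has a single black vertex, so no filling-order sign intervenes, and the relation reduces to the manifest $(2\,3)$-symmetry of the black corolla on three leaves.
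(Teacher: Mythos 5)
Your proposal is correct: both the computation $d_\Tw(x)=\mu\circ_1 x-x\circ_1\mu-x\circ_2\mu=-g$ (with the stated cancellations) and the verification that $x\circ_1 g-(g\circ_1 x).(2\;3)-g\circ_2 x$ equals the $(2\;3)$-invariant black corolla on three leaves check out. This is exactly the paper's approach — the paper simply asserts the proposition follows from "a direct computation," and your write-up is that computation carried out in detail, with the sign subtleties (Remark on filling order) correctly identified as the only delicate point.
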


\begin{dfn}
    Let us define the differential graded operad $\greg_{-1}$ as the operad $\greg$ such that $g$ is of degree $1$ and with the differential $d$ such that $d(x)=d(y)=-g$.
\end{dfn}

The above proposition proves that we have a morphism of differential graded operads $\greg_{-1}\to\Tw\PL$, moreover since rooted Greg trees have no non-trivial tree automorphisms, this morphism is injective.

\begin{thm}\cite[Theorem 5.1]{PLTW}
    The embedding of differential graded operads $$(\Lie,0)\to\Tw\PL$$ induces an isomorphism in the cohomology.
\end{thm}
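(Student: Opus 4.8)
The plan is to compute the cohomology of $\Tw\PL$ arity by arity and to identify it with $\Lie$ through the cocycle $x-y\mapsto l$. Fix an arity $n$. Each of the five operations of Proposition~\ref{prp:descrdGreg} preserves the number of white vertices and raises the number of black vertices by exactly one, so $(\Tw\PL(\underline{n}),d_\Tw)$ is a cochain complex graded by the black-vertex count, and the morphism $\varphi\colon\Lie\to\PL\subset\Tw\PL$ lands in degree $0$ as the cocycle $x-y$, since $d_\Tw(x-y)=-g-(-g)=0$.

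\textbf{Step 1: reduction to $\greg_{-1}$.} The trees occurring in $\Tw\PL(\underline{n})$ that are not rooted Greg trees are exactly those carrying a black leaf, a unary black vertex, or a black root — precisely the features created by the grafting operations (4) and (5) of Proposition~\ref{prp:descrdGreg}. The cancellation already recorded there, that for a tree with more than one vertex every contribution of (4) and (5) is annihilated by a contribution of (1)--(3), strongly suggests that the span of the non-Greg trees is acyclic. I would make this rigorous by constructing an explicit contracting homotopy $h$ on the non-Greg part, erasing the most recently created black leaf or absorbing a unary black vertex into its neighbour, and verifying $d_\Tw h+h\,d_\Tw=\mathrm{id}$ there. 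The sign bookkeeping, governed by the filling order of the black vertices (Remark~\ref{rmk:sign}) and the Koszul rule, is the delicate point, and I expect the main obstacle of the whole argument to lie here: the homotopy and the differential must be matched term by term. Once this is achieved, the inclusion $\greg_{-1}\hookrightarrow\Tw\PL$ is a quasi-isomorphism and the problem reduces to the finite-dimensional complex $\greg_{-1}(\underline{n})$.

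\textbf{Step 2: cohomology of $\greg_{-1}$.} Passing to the generators $l=x-y$, $x$ and $g=-d_\Tw(x)$, the differential becomes $d(l)=0$, $d(x)=-g$, $d(g)=0$; at the level of generators the pair $(x,g)$ is contractible while $l$ is a genuine cocycle, so one expects $H^\bullet(\greg_{-1})$ to be the suboperad generated by $l$ subject to the Jacobi relation, that is $\Lie$ in degree $0$. To promote this generator-level statement to the whole operad I would exploit the Koszulness of $\greg$: its quadratic presentation and Koszul dual give, through Proposition~\ref{prp:KoszulHilbertSeries}, the full bigraded Hilbert series, from which the Euler characteristic $\sum_k(-1)^k\dim\greg_k(\underline{n})$ is computed and checked against $\dim\Lie(\underline{n})=(n-1)!$. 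A spectral sequence on a suitable auxiliary grading then shows the cohomology is concentrated in degree $0$; together with the injectivity of $l\mapsto x-y$ on $H^0$ and the matching Euler characteristic this forces $H^\bullet(\greg_{-1})\cong\Lie$.

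Combining the two steps, the composite $(\Lie,0)\to\greg_{-1}\to\Tw\PL$ induces the desired isomorphism on cohomology, the class of $x-y$ corresponding to $l$. As indicated, the heart of the proof is Step 1: controlling the signs in the homotopy that annihilates the black leaves, unary black vertices and black roots is what makes the passage from the unwieldy infinite-dimensional $\Tw\PL$ to the transparent $\greg_{-1}$ work.
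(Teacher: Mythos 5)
Your proposed strategy --- split $\Tw\PL$ as $\greg_{-1}\oplus\mathcal{X}$ with $\mathcal{X}$ spanned by the non-Greg trees, show $\mathcal{X}$ is acyclic, and compute $H^*(\greg_{-1})$ independently --- is sound in outline, and the splitting itself is unproblematic (it is exactly the observation the paper makes: the differential cannot repair a black vertex having fewer than two children). But be aware that this theorem is not proven in the paper at all: it is quoted from the literature ([PLTW, Theorem~5.1]), and the paper runs the logic in the \emph{opposite} direction, using the quoted theorem together with the splitting to deduce that $H^*(\greg_{-1})\cong\Lie$. Your proposal therefore amounts to reproving the external theorem from scratch, and both of the steps that carry the actual difficulty are left unestablished.

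Concretely, there are two genuine gaps. First, in Step~1 you never construct the contracting homotopy $h$ on $\mathcal{X}$; you name a candidate (erase the most recently created black leaf, absorb a unary black vertex) and yourself flag the verification of $d_\Tw h + h\,d_\Tw=\mathrm{id}$ as the main obstacle. Since $d_\Tw$ acts on \emph{every} vertex of a tree simultaneously, $d_\Tw h + h\,d_\Tw$ produces a large number of cross terms, and a vertex-by-vertex homotopy of this kind will not satisfy the identity on the nose; one needs at least a filtration making the cross terms lower order, followed by a spectral-sequence or perturbation argument, none of which is supplied. Second, in Step~2 the key claim --- that $H^*(\greg_{-1})$ is concentrated in degree~$0$ --- is asserted via ``a spectral sequence on a suitable auxiliary grading'' with no grading and no spectral sequence exhibited. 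The Euler-characteristic computation you describe only converts concentration into the dimension count $(n-1)!$; it cannot prove concentration, which is the entire content of the theorem. Within this paper, concentration in degree~$0$ for $\greg_{-1}$ is itself a \emph{corollary} of the theorem you are trying to prove, so invoking it (or hand-waving it) makes the argument circular relative to the paper's logic. A minor further slip: your characterization of the non-Greg trees is off, since a black root with at least two children is a perfectly good Greg tree (e.g.\ $g$ itself); non-Greg means precisely that some black vertex has fewer than two children, which is what the paper uses to see that $\mathcal{X}$ is a subcomplex.
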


\begin{crl}
    The embedding of differential graded operads $$(\Lie,0)\to\greg_{-1}$$ induces an isomorphism $\Lie\to H^*(\greg_{-1})$.
\end{crl}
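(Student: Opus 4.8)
The plan is to factor the embedding as $(\Lie,0)\xrightarrow{\iota}\greg_{-1}\xrightarrow{j}\Tw\PL$, where $\iota$ is the map of the corollary and $j$ is the injective morphism of differential graded operads produced just above. The composite $j\circ\iota$ is exactly the embedding $(\Lie,0)\to\Tw\PL$ of \cite[Theorem~5.1]{PLTW}, which induces an isomorphism in cohomology. By the two-out-of-three property for quasi-isomorphisms it then suffices to prove that $j$ is itself a quasi-isomorphism: granting this, $\iota_*=(j_*)^{-1}\circ(j\circ\iota)_*$ is an isomorphism, and it is by construction the map induced by $\iota$, so the cohomology is concentrated in degree $0$ and equal to $\Lie$ there.

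To prove that $j$ is a quasi-isomorphism I would argue arity by arity from the short exact sequence of cochain complexes $0\to\greg_{-1}\xrightarrow{j}\Tw\PL\xrightarrow{\pi}Q\to0$, in which $Q$ is the quotient with basis the trees of $\Tw\PL$ carrying at least one \emph{defective} black vertex, that is, a black vertex with fewer than two children (a black leaf, or a black vertex with a single child). That $j$ is an honest inclusion of complexes is already built into the statement that $\greg_{-1}\to\Tw\PL$ is a morphism of differential graded operads: $d_\Tw$ must preserve the subspecies of Greg trees, so on a genuine Greg tree every term of $d_\Tw$ that would create a defective vertex must cancel. The long exact sequence in cohomology then reduces the whole claim to the acyclicity of $Q$.

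Constructing a contracting homotopy on $Q$ is the step I expect to be the main obstacle. The natural candidate $h$ inverts the leaf-grafting part of the differential: contribution~(4) of Proposition~\ref{prp:descrdGreg} grafts a new black leaf, and $h$ should undo this, summed over the defective black vertices in a fixed order. The cancellation recorded in Proposition~\ref{prp:descrdGreg} — that on every tree with more than one vertex the grafting contributions~(4) and~(5) are compensated by the splitting contributions~(1)--(3) — is precisely what should make $dh+hd$ collapse to the identity on $Q$. The delicate points are the sign bookkeeping dictated by Remark~\ref{rmk:sign} (the order in which the black vertices are ``filled'') and the interaction with the no-nontrivial-automorphism constraint, which forbids two identical black leaves at one vertex and so makes the relevant local pieces square-zero rather than polynomial; I would control both by filtering $Q$ by the number of defective black vertices and verifying the homotopy identity on the associated graded, where only the leaf-grafting term of $d_\Tw$ survives. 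Note that the other direction is automatic, and is a useful sanity check: since $(j\circ\iota)_*$ is an isomorphism, any class killed by $\iota_*$ is killed by $(j\circ\iota)_*$ and hence vanishes, so $\iota_*$ is injective and only the surjectivity — equivalently the acyclicity of $Q$ — is genuinely at stake.
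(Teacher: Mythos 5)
Your reduction is sound as far as it goes: $\greg_{-1}$ is a subcomplex of $\Tw\PL$ because the dg-operad morphism exists, the composite $(\Lie,0)\to\Tw\PL$ is a quasi-isomorphism by \cite[Theorem 5.1]{PLTW}, and by two-out-of-three everything comes down to the acyclicity of your quotient $Q$. The genuine gap is that you never prove this acyclicity, and the strategy you outline for it fails as stated. Filter a fixed arity of $\Tw\PL$ by the number of defective black vertices: every one of the five terms of Proposition~\ref{prp:descrdGreg} changes this number by $0$ or $+1$ (a grafted black leaf is itself defective, a new black root has one child, a black vertex created by splitting a white vertex may receive too few edges), so the differential induced on the associated graded is the degree-preserving part, and this is far from being only the leaf-grafting term: it contains the white-vertex splittings in which the new black vertex receives at least two incoming edges, the black-vertex splittings creating no new defective vertex, and exactly those leaf-graftings that land on a black vertex with a single child; meanwhile the generic leaf-grafting and the root-grafting raise the count by one and vanish in the associated graded. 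So the homotopy identity cannot be ``verified where only the leaf-grafting term survives'' — with this filtration you are facing essentially the same difficulty as in the proof of \cite[Theorem 5.1]{PLTW} itself, which is precisely what the corollary is supposed to avoid.

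The observation you are missing — and it is the paper's entire proof — is that the span $\mathcal{X}$ of the non-Greg trees is itself closed under $d_\Tw$: if $T$ has a black vertex with at most one child, then every tree occurring in $d_\Tw T$ still has such a vertex (splitting other vertices does not affect it; splitting the defective vertex itself, or grafting a black leaf onto it, always leaves some defective black vertex in the result). Hence $\Tw\PL=\greg_{-1}\oplus\mathcal{X}$ as complexes, not merely as an extension $0\to\greg_{-1}\to\Tw\PL\to Q\to 0$. Once you have this splitting, no contracting homotopy is needed at all: $H^*(\Tw\PL)=H^*(\greg_{-1})\oplus H^*(\mathcal{X})$, the isomorphism $\Lie\to H^*(\Tw\PL)$ factors through the direct summand $H^*(\greg_{-1})$, so that summand is everything; this simultaneously gives $H^*(\mathcal{X})=0$ and that $\Lie\to H^*(\greg_{-1})$ is an isomorphism. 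In your setup this says $Q\cong\mathcal{X}$ as complexes, and its acyclicity is a consequence of \cite[Theorem 5.1]{PLTW} rather than an input you must establish by hand.
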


\begin{proof}
    It is enough to prove that the complex $\Tw\PL$ splits in $\greg_{-1}\oplus \mathcal{X}$ with $\mathcal{X}$ spanned by all non-Greg trees in $\Tw\PL$. Let $T$ be a non-Greg tree, then it has a black vertex with one child or without children, from the explicit description of the differential, in each tree appearing in $dT$, this black vertex will still have one child or no children.
\end{proof}

\section{Forests of rooted hypertrees and the operad $\CPL$}\label{sec:FRHT}

Let us now give a description \emph{à la} Chapoton--Livernet of the operad $\CPL$. The operad $\CPL$ was first introduced in \cite{DefCPL}, it is defined by the following presentation:
\begin{multline*}
    \Tt\{x,y,c\}/\langle (x\circ_1 x - x\circ_2 x) - (x\circ_1 x - x\circ_2 x).(2\;3),\\ x\circ_1 c - (c\circ_1 x).(1\;3) - c\circ_2 x,\; c\circ_1 c - c\circ_2 c \rangle
\end{multline*}
where $x$, $y$ and $c$ are binary operations, and the action of $\Sym_2$ on $\K\langle x,y,c\rangle$ is given by $x.(1\; 2)=y$ and $c.(1\;2)=c$. Its Koszul dual, the operad $\CPL^!$, is defined by the following presentation:
\begin{multline*}
    \Tt\{x_*,y_*,c_*\}/\langle x_*\circ_1 x_* - x_*\circ_2 x_*,\; x_*\circ_1 x_* - (x_*\circ_2 x_*).(2\;3),\\ x_*\circ_2 c_*,\; x_*\circ_1 c_* - (c_*\circ_1 x_*).(2\;3),\; c_*\circ_1 c_* - c_*\circ_2 c_* - c_*\circ_1 c_*.(2\;3) \rangle
\end{multline*}
where $x_*$, $y_*$ and $c_*$ are binary operations, and the action of $\Sym_2$ on $\K\langle x_*,y_*,c_*\rangle$ is given by $x_*.(1\;2)=y_*$ and $c_*.(1\;2)=-c_*$. We refer to \cite[Subsection~7.2]{AlgOp} for a detailed description of the Koszul dual of a quadratic operad generated by arity $2$ elements.

In order to compute arity-wise dimensions of $\CPL^!$, let us introduce the following $\CPL^!$--algebra admitting an explicit description: 

\begin{dfn}
  Let $X$ a finite set, $\Lie(X)$ the free Lie algebra generated by $X$ and $\uCom(X)$ the free unitary commutative associative algebra generated by $X$. Let $LC(X)=\Lie(X)\otimes \uCom(X)$. For $a_1\otimes a_2$ and $b_1\otimes b_2$ in $LC(X)$, let us define two binary operations $[\cdot, \cdot]$ and $\cdot. \cdot$ by:
  \begin{itemize}
    \item $(a_1\otimes a_2).(b_1\otimes b_2)=a_1\otimes( a_2.b_1. b_2)$ if $b_1\in \Vect(X)$;
    \item $(a_1\otimes a_2).(b_1\otimes b_2)=0$ if $b_1\notin\Vect(X)$;
    \item $[(a_1\otimes a_2), (b_1\otimes b_2)]=[a_1, b_1]\otimes( a_2. b_2)$.
  \end{itemize}
  One can check that $LC(X)$ is a $(\CPL)^!$--algebra with $[\cdot, \cdot]$ the image of $c_*$ and $\cdot. \cdot$ the image of $x_*$. Moreover, it is generated as a $(\CPL)^!$--algebra by the elements of the form $a\otimes1$ with $a\in X$ and $1$ the unit of $\uCom(X)$.
\end{dfn}

\begin{dfn}
    Let $u_n$ be the sequence of logarithmic numbers, see Sequence~\oeis{A002104} in the OEIS \cite{Oeis}. This sequence is defined by:
    $$\sum_{n\geq1}\frac{u_n}{n!}t^n=-\log(1-t)\exp(t)$$
\end{dfn}

\begin{prp}
    We have that $u_n=\dim(\Mult(LC(\{a_1,\dots,a_n\})))$ with $\Mult$ the multilinear part, in particular $u_4=24$.
\end{prp}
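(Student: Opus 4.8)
=== PROOF PROPOSAL ===

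The plan is to compute the dimension of the multilinear part $\Mult(LC(X))$ for $X=\{a_1,\dots,a_n\}$ directly from the tensor product structure $LC(X)=\Lie(X)\otimes\uCom(X)$, and to match the resulting generating series against the defining series for $u_n$. First I would observe that the Schur functor / species viewpoint turns the multilinear dimension count into a count of the graded pieces of the underlying species. Since $LC$ is built from the free Lie algebra and the free unital commutative algebra, its underlying species is the plethysm-type composition coming from the tensor (Cauchy) product of the species $\Lie$ and $\uCom$. Concretely, a multilinear element of $LC(X)$ on the full label set $\underline n$ is a sum of tensors $w\otimes m$ where $w$ is a multilinear Lie word and $m$ is a multilinear commutative monomial, and each label of $\underline n$ appears exactly once across the two factors. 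This means $\Mult(LC(\underline n))$ decomposes over all partitions $\underline n = A\sqcup B$ as $\bigoplus_{A\sqcup B=\underline n}\Mult(\Lie(A))\otimes\Mult(\uCom(B))$, i.e. the Cauchy product of the species $\Lie$ and $\uCom$ evaluated on $\underline n$.

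Next I would plug in the known multilinear dimensions of the two factors. The multilinear part of the free Lie algebra on $k$ labels has dimension $(k-1)!$ for $k\geq 1$ (and $0$ for $k=0$), so its exponential generating function is $\sum_{k\geq1}\frac{(k-1)!}{k!}t^k=\sum_{k\geq1}\frac{t^k}{k}=-\log(1-t)$. The multilinear part of the free \emph{unital} commutative algebra on $j$ labels is one-dimensional for every $j\geq 0$ (the single squarefree monomial, with the unit $1$ contributing in degree $0$), so its exponential generating function is $\sum_{j\geq0}\frac{1}{j!}t^j=\exp(t)$. By the standard fact that the Hilbert/exponential generating series of a Cauchy product of species is the product of the series (recalled in the excerpt for the Cauchy product), the generating function of $\dim\Mult(LC(\underline n))$ is the product $-\log(1-t)\exp(t)$, which is exactly the defining series $\sum_{n\geq1}\frac{u_n}{n!}t^n$ for the logarithmic numbers. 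Comparing coefficients of $t^n/n!$ gives $u_n=\dim\Mult(LC(\{a_1,\dots,a_n\}))$.

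Finally I would verify the stated value $u_4=24$ as a sanity check, either by extracting the $t^4$ coefficient of $-\log(1-t)\exp(t)$ or by the combinatorial count $\sum_{k=1}^{4}\binom{4}{k}(k-1)!\cdot 1 = 4+12+8+4=24$, corresponding to choosing which $k$ labels go into the Lie factor.

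The one genuine subtlety I expect — and the step to state carefully rather than wave through — is the unitality of the commutative factor, since it is what allows the $\uCom$ part to be nonempty (one-dimensional) even on the empty label set and thereby produces the $\exp(t)$ rather than $\exp(t)-1$; this is exactly what makes the multilinear pieces pair up correctly with the Lie factor carrying at least one label. I would check that the $\CPL^!$-algebra operations indeed restrict to genuine multilinear operations compatible with this bigraded decomposition (in particular that the product lands in the claimed summands), so that $\Mult(LC(\underline n))$ really is the full Cauchy product and not a proper subspace; granting the explicit description of $LC(X)$ from the preceding definition, this is the only point requiring attention.
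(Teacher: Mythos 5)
Your proposal is correct and follows essentially the same route as the paper: both identify $\Mult(LC(\underline{n}))$ with the Cauchy product of the species $\Lie$ and $\uCom$ via the decomposition over $\underline{n}=A\sqcup B$, and conclude that its exponential generating series is $-\log(1-t)\exp(t)$, matching the definition of $u_n$ (your version just spells out the factor series $-\log(1-t)$ and $\exp(t)$ and the unitality point more explicitly). One trivial slip in your sanity check: the terms of $\sum_{k=1}^{4}\binom{4}{k}(k-1)!$ are $4+6+8+6$, not $4+12+8+4$, though the total is indeed $24$.
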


\begin{proof}
    Since $LC(\{a_1,\dots,a_n\})=\Lie(\{a_1,\dots,a_n\})\otimes\uCom(\{a_1,\dots,a_n\})$, we have that:
    $$\Mult(LC(\{a_1,\dots,a_n\}))=\bigoplus_{I\sqcup J=\{a_1,\dots,a_n\}}\Mult(\Lie(I))\otimes\Mult(\uCom(J))$$
    Hence, $n\to\Mult(LC(\{a_1,\dots,a_n\}))$ give raise to a species which is the Cauchy product of $\Lie$ and $\uCom$. Hence, its exponential generating series is $-\log(1-t)\exp(t)$.
\end{proof}

The operad $\CPL^!$ admits a terminating quadratic rewriting system displayed in the appendix Supplementary Figure~1, it has $19$ rules. This rewriting system is obtained using the ``quantum$(x_*,y_*\; c_*)$ permutation degree-lexicographic order'' see \cite{GB,Quant} for further details on shuffle operads and monomial orders. Moreover, this rewriting system has the following property:

\begin{prp}
    The sequence of numbers of normal form of the rewriting system displayed in Supplementary Figure~1 is the sequence of logarithmic numbers $u_n$.
\end{prp}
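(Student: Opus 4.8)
The plan is to squeeze $N_n$, the number of normal forms in arity $n$, between $u_n$ on both sides. First I would record the two automatic inequalities. Since the rewriting system is terminating, every shuffle monomial reduces to a $\K$-linear combination of normal forms, so the classes of the normal forms span $\CPL^{!}(\underline{n})$ and therefore $\dim_\K\CPL^{!}(\underline{n}) \le N_n$. For the opposite bound, the $\CPL^{!}$-algebra $LC(\{a_1,\dots,a_n\})$ is generated by the $n$ elements $a_i\otimes 1$, one attached to each generator, so the canonical map from the free $\CPL^{!}$-algebra on $n$ generators is surjective and, restricted to multilinear components, exhibits $\Mult(LC(\{a_1,\dots,a_n\}))$ as a quotient of $\CPL^{!}(\underline{n})$. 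With the preceding proposition this gives $u_n = \dim_\K\Mult(LC(\{a_1,\dots,a_n\})) \le \dim_\K\CPL^{!}(\underline{n})$. Chaining the two reduces the whole statement to the single inequality $N_n \le u_n$.

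To obtain $N_n \le u_n$ I would describe the normal forms explicitly and inject them into a basis of the Cauchy product $\Lie\otimes\uCom$. A normal form is a shuffle-tree monomial in $x_*,y_*,c_*$ containing none of the $19$ leading monomials as a divisor. Reading off the relations of $\CPL^{!}$: the two relations $x_*\circ_1 x_* - x_*\circ_2 x_*$ and $x_*\circ_1 x_* - (x_*\circ_2 x_*).(2\;3)$ rigidify any purely $x_*$ (equivalently $y_*$) region into a single canonical shape on each leaf set, reproducing the one-dimensional multilinear components of $\uCom$; the relations $x_*\circ_2 c_*$ and $x_*\circ_1 c_* - (c_*\circ_1 x_*).(2\;3)$ fix the relative position of the $x_*$- and $c_*$-vertices, so that a normal form splits into an $x_*$-region and a $c_*$-region on complementary leaf sets; and the relation $c_*\circ_1 c_* - c_*\circ_2 c_* - (c_*\circ_1 c_*).(2\;3)$ is precisely the Jacobi relation, so the admissible purely $c_*$ sub-monomials on $k$ leaves are counted by a basis of $\Lie(\underline{k})$, of which there are $(k-1)!$. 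Concretely, this associates to each normal form a subset $I\subseteq\underline{n}$ carrying a Lie-basis bracketing and the complementary set $J=\underline{n}\setminus I$ carrying the commutative $x_*$-product, i.e.\ a basis element of $(\Lie\otimes\uCom)(\underline{n})$. Injectivity of this encoding yields
\[
    N_n \le \sum_{k=1}^{n}\binom{n}{k}(k-1)! = u_n ,
\]
the right-hand side being the arity-$n$ coefficient read off from the generating series $-\log(1-t)\exp(t)$.

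Combining the three inequalities gives $u_n \le \dim_\K\CPL^{!}(\underline{n}) \le N_n \le u_n$, so all are equalities and $N_n = u_n$; moreover the injection above is then automatically a bijection and the normal forms form a basis of $\CPL^{!}(\underline{n})$. In particular the terminating rewriting system is confluent, which via Proposition~\ref{prp:QGB} gives the Koszulness of $\CPL^{!}$, hence of $\CPL$. The main obstacle is the middle paragraph: turning the $19$ leading monomials into a precise combinatorial description of the normal forms, and proving that the resulting encoding into $\Lie\otimes\uCom$ is injective — in particular checking that the $x_*$-region is genuinely rigidified to the commutative shape and that the surviving $c_*$-bracketings are exactly those of a Lie basis, with no admissible monomial left uncounted.
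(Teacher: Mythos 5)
Your proposal is correct in substance, and its combinatorial core coincides with the paper's, but the packaging is genuinely different. The paper proves this proposition by a direct bijection, with no dimension bounds at all: it observes that the rules involving only $c_*$ are exactly the known Gr\"obner basis of $\Lie$ for the permutation degree-lexicographic order (so the pure-$c_*$ normal forms on $k$ leaves are counted by a basis of $\Lie$, i.e.\ by $(k-1)!$, by citation of \cite[Example~5.6.1.1]{GB}), that the rules involving only $x_*,y_*$ form a Gr\"obner basis of $\Perm$, and that the mixed rules force every normal form to be a pair $(a,b)$ with $a$ a $\Lie$ normal form composed in the non-symmetric input of a $\Perm$ normal form $b$; such a pair is precisely a basis element of $\Mult(LC(\{a_1,\dots,a_n\}))$, whence the count $u_n$. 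Your sandwich $u_n\le\dim_\K\CPL^!(\underline{n})\le N_n\le u_n$ replaces the bijection by an injection, which is a real economy, and it buys more: once closed, it yields confluence in every arity and hence Koszulness of $\CPL^!$ directly, whereas the paper keeps this proposition purely combinatorial and proves Koszulness separately --- indeed your first two inequalities are exactly the ones the paper deploys there, but only in arity $4$, where the critical-pair argument for a quadratic system lives. The one point to be careful about is the step you yourself flag as the main obstacle: the bound $N_n\le u_n$ cannot be obtained by any termination-type argument, since termination only gives spanning, i.e.\ inequalities in the opposite direction ($\dim\le N_n$, and likewise $(k-1)!\le$ the number of pure-$c_*$ normal forms). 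So your claim that the admissible purely $c_*$ monomials on $k$ leaves number at most $(k-1)!$ genuinely requires the explicit description of the $\Lie$ normal forms for this monomial order --- precisely the fact the paper imports by citation --- and similarly the rigidity of the $x_*$-region is the statement that the restricted system is the $\Perm$ Gr\"obner basis. With those two known descriptions invoked, your injection closes exactly as the paper's bijection does, so the approach is sound; without them, the middle inequality is an assertion rather than a proof.
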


\begin{proof}
    Since $\CPL^!$ is graded by the number of occurrences of $c_*$, it is clear that the suboperad generated by $c_*$ is the $\Lie$ operad. The rewriting system displayed in Supplementary Figure~1 restricted to $c_*$ is the rewriting system associated to the ``permutation degree-lexicographic order'', which is known to be a Gröbner basis for the $\Lie$ operad, see \cite[Example~5.6.1.1]{GB}. In particular, normal forms of $\Lie(n)$ are in bijection with a basis of $\Lie(\{a_1,\dots,a_n\})$.\\
    An analogous observation shows that the rewriting system displayed in Supplementary Figure~1 restricted to $x_*$ and $y_*$ is a Gröbner basis of the operad $\PL^!$, also known as $\mathrm{Perm}$, see \cite{Perm}.\\
    Moreover, a normal form is given by a pair $(a,b)$ with $a$ a normal form of $\Lie$ and $b$ a normal form of $\mathrm{Perm}$, with $a$ composed in the non-symmetric input of $b$. This allows us to build a bijection between normal forms of $\CPL^!(n)$ and $\Mult(LC(\{a_1,\dots,a_n\}))$. Hence, the number of normal forms of $\CPL^!(n)$ is the number of multilinear elements of $LC(\{a_1,\dots,a_n\})$. This concludes the proof.
\end{proof}

\begin{thm}
    The operad $\CPL^!$ is Koszul. Moreover, its Hilbert series is given by:
    $$ f_{\CPL^!}(t) = -\log(1-t)\exp(t) $$
\end{thm}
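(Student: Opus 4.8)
The plan is to prove both assertions at once by squeezing $\dim\CPL^!(\underline n)$ between two bounds that each equal the logarithmic number $u_n$, and to read off Koszulness from the fact that the squeeze forces the rewriting system to be confluent. I would not attempt to verify local confluence by hand, since checking all critical-pair overlaps of the $19$ rules of Supplementary Figure~1 is unpleasant; instead the dimension count does this for free.

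First I would extract an upper bound. The rewriting system of Supplementary Figure~1 is terminating and its rules are consequences of the quadratic relations, so the images of the normal-form monomials span $\CPL^!(\underline n)$. Since the number of normal forms in arity $n$ has already been shown to be $u_n$, this gives $\dim\CPL^!(\underline n)\leq u_n$.

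Next I would produce the matching lower bound from the explicit $\CPL^!$--algebra $LC(X)$. The algebra $LC(\{a_1,\dots,a_n\})$ is generated by the $n$ elements $a_i\otimes 1$, so the universal property of the free $\CPL^!$--algebra yields a surjection from the free $\CPL^!$--algebra on $n$ generators onto $LC(\{a_1,\dots,a_n\})$. The multilinear component of the free $\CPL^!$--algebra on $n$ generators is exactly $\CPL^!(\underline n)$, and this surjection restricts to multilinear parts, giving a surjection $\CPL^!(\underline n)\twoheadrightarrow\Mult(LC(\{a_1,\dots,a_n\}))$. Hence $\dim\CPL^!(\underline n)\geq\dim\Mult(LC(\{a_1,\dots,a_n\}))=u_n$.

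Combining the two bounds forces $\dim\CPL^!(\underline n)=u_n$, so the normal-form monomials, which already span, must be linearly independent and therefore form a basis. Linear independence of the normal forms is precisely confluence of the rewriting system; being also terminating it is convergent, and being quadratic, Proposition~\ref{prp:QGB} gives that $\CPL^!$ is Koszul. The Hilbert series then follows at once, since $f_{\CPL^!}(t)=\sum_{n\geq1}\frac{\dim\CPL^!(\underline n)}{n!}t^n=\sum_{n\geq1}\frac{u_n}{n!}t^n=-\log(1-t)\exp(t)$ by the defining equation of the logarithmic numbers. The load-bearing step is this confluence-by-dimension argument: one must be careful that the $LC(X)$ bound is genuinely a lower bound on $\CPL^!(\underline n)$ (i.e. that the surjection exists and that the multilinear part of the free algebra is identified with $\CPL^!(\underline n)$), since any slack there would break the squeeze and leave confluence unproven.
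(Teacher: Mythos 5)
Your proposal is correct and is essentially the paper's own argument: the same squeeze between the normal-form count of the terminating rewriting system (upper bound) and the dimension of $\Mult(LC(\{a_1,\dots,a_n\}))$ obtained from the surjection of the free $\CPL^!$--algebra onto $LC(X)$ (lower bound), concluding confluence from linear independence of normal forms and Koszulness via Proposition~\ref{prp:QGB}. The only difference is cosmetic: the paper runs the comparison in arity $4$ alone, which suffices because critical pairs of a quadratic rewriting system on binary generators live in arity $4$, whereas you run it in every arity, which is equally valid since the normal-form count $u_n$ was established for all $n$.
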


\begin{proof}
    The inequality $\dim(\CPL^!(4))\geq 24$ and the fact that the rewriting system admit $24$ normal forms in arity $4$ ensures that it is a Gröbner basis. Hence, $\CPL^!$ is Koszul.
\end{proof}

\begin{crl}\label{thm:CPLK}
    The operad $\CPL$ is Koszul. Moreover, its Hilbert series is given by:
    $$ f_{\CPL}(t) = \rev(\log(1+t)\exp(-t)) $$
    with $\rev$ the compositional inverse in $t$ of a series.
\end{crl}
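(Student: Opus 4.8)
The plan is to deduce both assertions from the theorem just proved for $\CPL^!$, using the two standard pillars of operadic Koszul theory. First I would invoke the fact that a quadratic operad is Koszul if and only if its Koszul dual is Koszul, together with the involutivity of quadratic Koszul duality, namely $(\CPL^!)^! = \CPL$ (the presentation of $\CPL^!$ given above is precisely the Koszul dual of the presentation of $\CPL$). Since the preceding theorem establishes that $\CPL^!$ is Koszul, these two facts give immediately that $\CPL$ is Koszul, settling the first claim with no further computation.

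For the Hilbert series, I would apply Proposition~\ref{prp:KoszulHilbertSeries} to the Koszul pair $(\CPL,\CPL^!)$, which yields the functional equation $f_{\CPL}\bigl(-f_{\CPL^!}(-t)\bigr) = t$. Substituting the known value $f_{\CPL^!}(t) = -\log(1-t)\exp(t)$ and replacing $t$ by $-t$ gives $f_{\CPL^!}(-t) = -\log(1+t)\exp(-t)$, hence $-f_{\CPL^!}(-t) = \log(1+t)\exp(-t)$. The equation therefore reads $f_{\CPL}\bigl(\log(1+t)\exp(-t)\bigr) = t$. Writing $g(t) = \log(1+t)\exp(-t)$, one checks that $g(0)=0$ and that the coefficient of $t$ in $g$ equals $1$, so $g$ admits a compositional inverse as a formal power series; the identity $f_{\CPL}\circ g = \mathrm{id}$ is then exactly the statement $f_{\CPL} = \rev\bigl(\log(1+t)\exp(-t)\bigr)$, as claimed.

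I do not expect a genuine obstacle here. All the real content was already spent in proving the Koszulness and Hilbert series of $\CPL^!$; the corollary is a formal consequence. The only points requiring a little care are the sign bookkeeping in the substitution $t\mapsto -t$ inside Proposition~\ref{prp:KoszulHilbertSeries}, and the observation that $g$ is invertible as a power series (which follows from its linear coefficient being $1$). The transfer of Koszulness across the duality is imported as a black box from the cited general theory, so once $\CPL^!$ is understood the argument is purely mechanical.
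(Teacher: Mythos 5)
Your proposal is correct and matches the paper's (implicit) argument: the corollary is stated as an immediate consequence of the theorem that $\CPL^!$ is Koszul, relying exactly on the transfer of Koszulness across quadratic duality together with Proposition~\ref{prp:KoszulHilbertSeries}. Your sign bookkeeping $-f_{\CPL^!}(-t) = \log(1+t)\exp(-t)$ and the invertibility check of the resulting series are exactly the computation the paper leaves to the reader.
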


With this result, we know the dimensions of the operad $\CPL$, indeed it is Sequence~\oeis{A052888} of the OEIS \cite{Oeis}. Let us now give a combinatorial description of the operad $\CPL$. To do so, let us introduce the hypergraphs and hypertrees from \cite{HT}.

\begin{dfn}
    An \emph{hypergraph} is a pair $(\V,\E)$ where $\V$ is a finite set and $\E$ is a subset of $\mathcal{P}_{\geq2}(\V)$, the subsets of $\V$ of cardinality at least $2$. The elements of $\V$ are called \emph{vertices} and the elements of $\E$ are called \emph{edges}. The \emph{weight} of an edge is its cardinality minus $2$. The edges of positive weight are the \emph{hyperedges}, the edges of weight zero are the \emph{simple edge}.
\end{dfn}

\begin{dfn}
    A \emph{path} of length $n$ is a pair $((v_0,\dots, v_n),(e_1,\dots,e_n))$ where $(v_0,\dots, v_n)$ is a sequence of vertices and $(e_1,\dots,e_n)$ is a sequence of edges such that for all $i\in\{1,\dots,n\}$, $v_{i-1}\in e_i$ and $v_i\in e_i$. A \emph{cycle} is a path such that $v_0=v_n$, and all the edges $e_i$ are distinct. 
\end{dfn}

\begin{dfn}
    A \emph{rooted hypertree} $\tau$ is a hypergraph $(\V,\E)$ without cycles, and with a distinguished vertex called the \emph{root}. The \emph{weight} of a rooted hypertree is the sum of the weights of its hyperedges. A \emph{labeling} on $\tau$ is a bijective map $l:\V\to L$ with $L$ a set of labels. The species of rooted hypertrees is the species $\RHT$ such that $\RHT(L)$ is the set of rooted hypertrees labeled by the finite set $L$. The species $\RHT$ is graded by the weight, in particular, $\RHT_0=\RT$. In the following, rooted hypertrees will be labeled. Rooted hypertrees will be represented with the root at the bottom. 
\end{dfn}

\begin{rmk}
    In particular, one may remark that with this definition two vertices of a hypertree share at most one edge.
\end{rmk}

\begin{dfn}
    A \emph{forest of rooted hypertrees} is a non-empty finite set of rooted hypertrees labeled over disjoint sets of labels. The \emph{weight} of a forest of rooted hypertrees is the number of rooted hypertrees in the forest minus $1$ plus the sum of the weight of the rooted hypertrees in the forest. The species of forests of rooted hypertrees is the species $\FRHT$ such that $\FRHT(L)$ is the set of forests of rooted hypertrees labeled by the finite set $L$. The species $\FRHT$ is graded by the weight, in particular, $\FRHT_0=\RT$. The set $\FRHT(\underline{2})$ is depicted in Figure \ref{fig:FRHT2}.
\end{dfn}

\begin{figure}[ht]
    \caption{The set $\FRHT(\underline{2})$}
    \label{fig:FRHT2}
    \[
        \vcenter{\hbox{\begin{tikzpicture}[scale=0.25]
            \draw[thick] (0, 0)--(0, 3);
            \draw[fill=white, thick] (0, 0) circle [radius=24pt];
            \draw[fill=white, thick] (0, 3) circle [radius=24pt];
            \node at (0, 0) {\scalebox{1}{1}};
            \node at (0, 3) {\scalebox{1}{2}};
        \end{tikzpicture}}}
        \quad,\quad
        \vcenter{\hbox{\begin{tikzpicture}[scale=0.25]
            \draw[thick] (0, 0)--(0, 3);
            \draw[fill=white, thick] (0, 0) circle [radius=24pt];
            \draw[fill=white, thick] (0, 3) circle [radius=24pt];
            \node at (0, 0) {\scalebox{1}{2}};
            \node at (0, 3) {\scalebox{1}{1}};
            \end{tikzpicture}}}
        \quad,\quad
        \vcenter{\hbox{\begin{tikzpicture}[scale=0.25]
            \draw[fill=white, thick] (0, 0) circle [radius=24pt];
            \draw[fill=white, thick] (3, 0) circle [radius=24pt];
            \node at (0, 0) {\scalebox{1}{1}};
            \node at (3, 0) {\scalebox{1}{2}};
        \end{tikzpicture}}}
    \]
\end{figure}

Let $u$ be a formal variable encoding the weight grading. This means that $f_\FRHT(t,u)=\sum a_{i,j}\frac{t^iu^j}{i!}$ such that $a_{i,j}$ is the number of forests of rooted hypertrees with $i$ vertices and of weight $j$. 

\begin{prp}
    We have:
    $$ f_\FRHT(t,u)=\rev\left(\frac{\ln(1+ut)}{u}\exp(-t)\right) $$
    The triangle of number $\dim(\FRHT_k(n))$ is given by Sequence~\oeis{A364709}. 
\end{prp}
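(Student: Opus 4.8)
The plan is to set up a functional equation for the weight-graded exponential generating series of a single rooted hypertree and then solve it explicitly. Write $R(t,u)=\sum_{n,k}\dim(\RHT_k(\underline n))\frac{t^n u^k}{n!}$ for the series counting rooted hypertrees, where $u$ records the weight. First I would decompose a rooted hypertree along its root $r$: since the hypergraph has no cycles, deleting $r$ splits the tree into the edges incident to $r$, and each such edge consists of $r$ together with a non-empty set of vertices, each of which is the root of a sub-hypertree. Thus a rooted hypertree is a labelled root carrying a (possibly empty) set of edges, where an edge is a non-empty set of rooted hypertrees. An edge containing $r$ and $m$ branches has cardinality $m+1$, hence weight $m-1$, and the weight of the whole hypertree is the sum of the weights of its edges plus the weights internal to the branches. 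Reading this decomposition off at the level of generating series gives, for one edge, $\sum_{m\geq 1}u^{m-1}\frac{R^m}{m!}=\frac{e^{uR}-1}{u}$, and placing a set of such edges on top of the labelled root yields
\begin{equation*}
  R=t\exp\!\left(\frac{e^{uR}-1}{u}\right).
\end{equation*}
As a sanity check, letting $u\to 0$ recovers $R=te^R$, the equation for rooted trees, consistent with $\RHT_0=\RT$.

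Next I would treat the forest. A forest is a non-empty set of, say, $j$ rooted hypertrees, and its weight is $(j-1)$ plus the total weight of its constituent hypertrees; the summand $(j-1)$ contributes a factor $u^{j-1}$, so its series is
\begin{equation*}
  f_\FRHT(t,u)=\sum_{j\geq 1}u^{j-1}\frac{R^j}{j!}=\frac{e^{uR}-1}{u}=:F.
\end{equation*}
Comparing with the functional equation above, the inner exponential is exactly $F$, so $R=te^{F}$. This gives two relations, $uF=e^{uR}-1$ and $R=te^{F}$, from which I would eliminate $R$.

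Finally, solving $uF=e^{uR}-1$ for $R$ gives $R=\frac{1}{u}\ln(1+uF)$, and substituting into $R=te^{F}$ yields $t=\frac{\ln(1+uF)}{u}\,e^{-F}$. This expresses $t$ as a power series in $F$ starting with $F+O(F^2)$, so it is precisely the compositional inverse of the series $F=F(t)$, which itself starts with $t+O(t^2)$. Since reversion of a series of the form $t+\cdots$ is an involution, the identity $\rev(F)=\frac{\ln(1+ut)}{u}e^{-t}$ is equivalent to $F=\rev\!\left(\frac{\ln(1+ut)}{u}e^{-t}\right)$, which is the claimed formula; setting $u=1$ also recovers $f_\CPL$. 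The agreement with Sequence~\oeis{A364709} is then confirmed by expanding to low order. I expect the only delicate point to be the combinatorial decomposition of the first paragraph—identifying an edge incident to the root with a non-empty set of branches and checking that each edge's weight $\lvert e\rvert-2$ is exactly accounted for by the factor $u^{m-1}$—after which the computation is purely algebraic.
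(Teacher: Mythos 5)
Your proposal is correct and follows essentially the same route as the paper: the paper encodes your two decompositions as the species identities $\FRHT=\frac{1}{u}E_{\geq 1}(u.\RHT)$ and $\RHT=X.E(\FRHT)$, which translate into exactly your pair of equations $uF=e^{uR}-1$ and $R=te^{F}$, and then eliminates $R$ to get $\frac{\ln(1+uF)}{u}e^{-F}=t$. The only cosmetic difference is that you phrase the root decomposition directly at the level of generating series while the paper states it at the species level first.
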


\begin{proof}
    The graded species $\FRHT$ is given by the following formula:
    $$ \FRHT=\frac{1}{u}E_{\geq 1}(u.\RHT) $$
    with the notation that $E_{\geq 1}$ is the non-empty set species. Hence we have:
    $$ f_\FRHT=\frac{1}{u}(\exp(u.f_\RHT)-1) $$
    Hence:
    $$ f_\RHT=\frac{\ln(1+u.f_\FRHT)}{u} $$
    Moreover, the graded species $\RHT$ is given by the following formula:
    $$ \RHT=X.E(\FRHT) $$
    with the notation that $X$ is the singleton species and $E$ the set species. Hence we have:
    $$ \frac{\ln(1+u.f_\FRHT)}{u}\exp(-f_\FRHT)=t $$
    which concludes the proof.
\end{proof}

\begin{dfn}
    Let $S$ and $T$ be two forests of rooted hypertrees and $i$ be a label of a vertex $v_i$ of $S$. Let $B$ be the tree bellow the vertex $v_i$ in $S$, and $C=\{C_1,\dots,C_n\}$ be the set of forests of children of $v_i$ in $S$ such that each rooted hypertrees that are grafted at $v_i$ by the same edge are in the same forest. The \emph{insertion} of $T$ in $S$ at the vertex $i$ denoted $S\circ_i T$ is the formal sum of all possible way to graft the set of forests of rooted hypertrees $C_1,\dots,C_n$ on vertices of $T$ such that each rooted hypertrees of $C_j$ are grafted at $T$ by the same edge, and then grafting the result on the parent of $v_i$ in $B$. If $T$ is a forest this creates a unique hyperedge that connects all its rooted hypertrees to the parent of $v_i$. 
\end{dfn}

Let us compute the following examples:
\[
    \vcenter{\hbox{\begin{tikzpicture}[scale=0.25]     
        \node (3) at (0, 0) {};
        \node (1) at (0, -3) {};
        \draw[thick] (1)--(3);
        \draw[circle, fill=white] (1) circle [radius=24pt];
        \draw[circle, fill=white] (3) circle [radius=24pt];
        \node at (1) {\scalebox{1}{$1$}};
        \node at (3) {\scalebox{1}{$3$}};
    \end{tikzpicture}}} \; \circ_1 \; \vcenter{\hbox{\begin{tikzpicture}[scale=0.25]     
        \node (1) at (0, 0) {};
        \node (2) at (3, 0) {};
        \draw[circle, fill=white] (1) circle [radius=24pt];
        \draw[circle, fill=white] (2) circle [radius=24pt];
        \node at (1) {\scalebox{1}{$1$}};
        \node at (2) {\scalebox{1}{$2$}};
    \end{tikzpicture}}} \; = \; \vcenter{\hbox{\begin{tikzpicture}[scale=0.25]     
        \node (3) at (0, 0) {};
        \node (2) at (0, -3) {};
        \node (1) at (3, -3) {};
        \draw[thick] (3)--(2);
        \draw[circle, fill=white] (1) circle [radius=24pt];
        \draw[circle, fill=white] (2) circle [radius=24pt];
        \draw[circle, fill=white] (3) circle [radius=24pt];
        \node at (1) {\scalebox{1}{$1$}};
        \node at (2) {\scalebox{1}{$2$}};
        \node at (3) {\scalebox{1}{$3$}};
    \end{tikzpicture}}} \; + \; \vcenter{\hbox{\begin{tikzpicture}[scale=0.25]     
        \node (2) at (3, -3) {};
        \node (1) at (0, -3) {};
        \node (3) at (0, 0) {};
        \draw[thick] (1)--(3);
        \draw[circle, fill=white] (1) circle [radius=24pt];
        \draw[circle, fill=white] (2) circle [radius=24pt];
        \draw[circle, fill=white] (3) circle [radius=24pt];
        \node at (1) {\scalebox{1}{$1$}};
        \node at (2) {\scalebox{1}{$2$}};
        \node at (3) {\scalebox{1}{$3$}};
    \end{tikzpicture}}}
\]
and:
\[
    \vcenter{\hbox{\begin{tikzpicture}[scale=0.25]     
        \node (3) at (0, 0) {};
        \node (1) at (0, -3) {};
        \draw[thick] (1)--(3);
        \draw[circle, fill=white] (1) circle [radius=24pt];
        \draw[circle, fill=white] (3) circle [radius=24pt];
        \node at (1) {\scalebox{1}{$1$}};
        \node at (3) {\scalebox{1}{$2$}};
    \end{tikzpicture}}} \; \circ_2 \; \vcenter{\hbox{\begin{tikzpicture}[scale=0.25]     
        \node (1) at (0, 0) {};
        \node (2) at (3, 0) {};
        \draw[circle, fill=white] (1) circle [radius=24pt];
        \draw[circle, fill=white] (2) circle [radius=24pt];
        \node at (1) {\scalebox{1}{$2$}};
        \node at (2) {\scalebox{1}{$3$}};
    \end{tikzpicture}}} \; = \; \vcenter{\hbox{\begin{tikzpicture}[scale=0.25]     
        \node (2) at (-1.5, 0) {};
        \node (1) at (0, -3) {};
        \node (3) at (1.5, 0) {};
        \fill[gray] (1.north)--(2.south)--(3.south)--cycle;
        \draw[thick] (1.north)--(2.south);
        \draw[thick] (1.north)--(3.south);
        \draw[thick] (2.south)--(3.south);
        \draw[circle, fill=white] (1) circle [radius=24pt];
        \draw[circle, fill=white] (2) circle [radius=24pt];
        \draw[circle, fill=white] (3) circle [radius=24pt];
        \node at (1) {\scalebox{1}{$1$}};
        \node at (2) {\scalebox{1}{$2$}};
        \node at (3) {\scalebox{1}{$3$}};
    \end{tikzpicture}}} 
\]
As one can remark, a hyperedge is created in the second example.

\begin{prp}
    The insertions satisfy the parallel and sequential axioms. Hence, they give a structure of operad on the species of forest of rooted hypertrees. 
\end{prp}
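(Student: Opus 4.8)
The plan is to verify the sequential and parallel axioms of \cite[Paragraph~5.3.7]{AlgOp} directly, by writing each side of an axiom as a sign-free formal sum indexed by combinatorial \emph{grafting data} and exhibiting an explicit bijection between the two indexing sets. The key preliminary step is to repackage the insertion: to form $S\circ_i T$, let $C_1,\dots,C_n$ be the children-forests of $v_i$ grouped by the edge attaching them to $v_i$; then a single summand is determined by a \emph{grafting function} $g\colon\{1,\dots,n\}\to\V(T)$ that attaches each forest $C_k$ to the vertex $g(k)$ of $T$ by one new edge, after which the whole of $T$ is reconnected to the parent of $v_i$ in $B$, its roots replacing $v_i$ in the edge above $v_i$ and creating a hyperedge when $T$ has several roots. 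Thus $S\circ_i T=\sum_g S\circ_i^g T$, and each operad axiom becomes the assertion that a certain pair of grafting functions on one side is matched bijectively with a pair on the other, producing the same forest of rooted hypertrees. As a sanity check, when every forest involved is a single tree no hyperedge is ever created and the construction restricts to the insertion of \cite{RT}, which is already known to be an operad; the genuinely new content lies entirely in the bookkeeping of multi-rooted forests and the hyperedges they generate.

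For the sequential axiom $(S\circ_i T)\circ_j U=S\circ_i(T\circ_j U)$, with $j$ a label of $T$, I would first observe that every summand on either side has the same underlying vertex set $(\V(S)\setminus\{v_i\})\sqcup(\V(T)\setminus\{v_j\})\sqcup\V(U)$, so it suffices to match the edges. A left-hand summand is a pair $(g,h)$, where $g$ grafts the children-forests of $v_i$ onto $\V(T)$ and $h$ then grafts the children-forests of $v_j$ in $S\circ_i^g T$ --- which consist of the original children of $v_j$ in $T$ together with those $C_k$ having $g(k)=v_j$ --- onto $\V(U)$. A right-hand summand is a pair $(h',g')$, where $h'$ grafts the children of $v_j$ in $T$ onto $\V(U)$ to build $T\circ_j^{h'}U$, and $g'$ grafts the children of $v_i$ onto $\V(T\circ_j U)=(\V(T)\setminus\{v_j\})\sqcup\V(U)$. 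The bijection keeps the restriction of $h$ to the children of $v_j$ as $h'$, and sets $g'(k)=g(k)$ when $g(k)\neq v_j$ and $g'(k)=h(k)\in\V(U)$ when $g(k)=v_j$; one then checks that the single new edge created for each $C_k$, and the reconnection above $v_i$, are identical in the two bookkeepings.

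For the parallel axiom $(S\circ_i T)\circ_j U=(S\circ_j U)\circ_i T$, with $i\neq j$ distinct labels of $S$, the point is that the two graftings are independent, so a left-hand summand $(g,h)$ --- $g$ grafting the children of $v_i$ onto $\V(T)$, $h$ grafting the children of $v_j$ onto $\V(U)$ --- is sent to the right-hand summand $(h,g)$. One must verify that the two insertions really do not interfere; the only delicate configuration is when one of $v_i,v_j$ is a descendant of the other, in particular when one is the root of a child-forest of the other. There, inserting (say) $U$ at $v_j$ modifies that child-forest internally and replaces $v_j$ by the roots of $U$ inside the edge joining the forest to $v_i$; since this changes neither the edge-group to which the forest belongs nor the set of admissible targets in $T$, grafting onto $g(k)$ before or after the $U$-insertion yields the same hypertree, and the two orders commute.

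The step I expect to be the main obstacle is exactly this bookkeeping of the newly created hyperedges through iterated insertions in the boundary cases: when the second insertion vertex is the root of a child-forest of the first, the two insertions share the edge above that vertex, and one must confirm that replacing that vertex by a forest of roots is carried out identically on both sides (and similarly when an insertion vertex is a root of $S$, where the number of connected components, hence the weight, changes). Once the grafting-function formalism is set up these verifications are routine, and every configuration that does not involve a shared edge reduces immediately to the already-established rooted-tree case.
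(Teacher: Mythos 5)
Your proposal is correct and takes essentially the same approach as the paper: the paper dismisses the parallel axiom as clear because the two insertions occur at distinct vertices, and proves the sequential axiom by displaying (in a figure) the common generic summand of both sides, which is exactly your bijection of grafting data in pictorial form. Your write-up is simply a more explicit, formula-level version of this direct verification, spelling out the hyperedge bookkeeping in the boundary cases that the paper leaves implicit.
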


\begin{proof}
    It is clear that the parallel axiom is verified since we are inserting forests of rooted hypertrees in different vertices. The proof of the sequential axiom is the computation shown in Figure~\ref{fig:FRHTseq} with the convention that double edges means that all the rooted hypertrees of the forest are grafted at the same vertex via the same edge. Indices are omitted for readability.
\end{proof}

\begin{figure}[ht]
    \caption{The sequential axiom for $\FRHT$}\label{fig:FRHTseq}
    \begin{subfigure}{.45\textwidth}\subcaption{Notation for $T$}
        \centering
        \[
            \vcenter{\hbox{\begin{tikzpicture}[scale=0.43]     
                \node (S) at (0, 0) {};
                \node[isosceles triangle, isosceles triangle apex angle=60, draw, rotate=270, fill=white!50, minimum size=24pt] (ST) at (S) {};
                \draw[circle, fill=black] (ST.east) circle [radius=2pt];
                \node at (S) {\scalebox{1}{$T$}};
            \end{tikzpicture}}} =
            \vcenter{\hbox{\begin{tikzpicture}[scale=0.43]     
                \node (R) at (0, 0) {};
                \node (v) at (0, 3) {};
                \node (1) at (-3, 6) {};
                \node (dots) at (0, 6) {};
                \node (2) at (3, 6) {};
                \node[isosceles triangle, isosceles triangle apex angle=60, draw, rotate=270, fill=white!50, minimum size=24pt] (RT) at (R) {};
                \node[isosceles triangle, isosceles triangle apex angle=60, draw, rotate=270, fill=white!50, minimum size=24pt] (1T) at (1) {};
                \node[isosceles triangle, isosceles triangle apex angle=60, draw, rotate=270, fill=white!50, minimum size=24pt] (2T) at (2) {};
                \draw[thick] (RT.west)--(v);
                \draw[thick,double] (1T.east)--(v);
                \draw[thick,double] (2T.east)--(v);
                \node at (dots) {$\cdots$};
                \draw[circle, fill=white] (v) circle [radius=24pt];
                \draw[circle, fill=black] (1T.east) circle [radius=2pt];
                \draw[circle, fill=black] (2T.east) circle [radius=2pt];
                \draw[circle, fill=black] (RT.east) circle [radius=2pt];
                \node at (1) {\scalebox{1}{$C_1$}};
                \node at (2) {\scalebox{1}{$C_k$}};
                \node at (R) {\scalebox{1}{$A$}};
                \node at (v) {\scalebox{1}{$v$}};
            \end{tikzpicture}}}
        \]
    \end{subfigure}
    \begin{subfigure}{.45\textwidth}\subcaption{Notation for $S$}
        \centering
        \[
            \vcenter{\hbox{\begin{tikzpicture}[scale=0.43]     
                \node (S) at (0, 0) {};
                \node[isosceles triangle, isosceles triangle apex angle=60, draw, rotate=270, fill=white!50, minimum size=24pt] (ST) at (S) {};
                \draw[circle, fill=black] (ST.east) circle [radius=2pt];
                \node at (S) {\scalebox{1}{$S$}};
            \end{tikzpicture}}} =
            \vcenter{\hbox{\begin{tikzpicture}[scale=0.43]     
                \node (R) at (0, 0) {};
                \node (v) at (0, 3) {};
                \node (1) at (-3, 6) {};
                \node (dots) at (0, 6) {};
                \node (2) at (3, 6) {};
                \node[isosceles triangle, isosceles triangle apex angle=60, draw, rotate=270, fill=white!50, minimum size=24pt] (RT) at (R) {};
                \node[isosceles triangle, isosceles triangle apex angle=60, draw, rotate=270, fill=white!50, minimum size=24pt] (1T) at (1) {};
                \node[isosceles triangle, isosceles triangle apex angle=60, draw, rotate=270, fill=white!50, minimum size=24pt] (2T) at (2) {};
                \draw[thick] (RT.west)--(v);
                \draw[thick,double] (1T.east)--(v);
                \draw[thick,double] (2T.east)--(v);
                \node at (dots) {$\cdots$};
                \draw[circle, fill=white] (v) circle [radius=24pt];
                \draw[circle, fill=black] (1T.east) circle [radius=2pt];
                \draw[circle, fill=black] (2T.east) circle [radius=2pt];
                \draw[circle, fill=black] (RT.east) circle [radius=2pt];
                \node at (1) {\scalebox{1}{$D_1$}};
                \node at (2) {\scalebox{1}{$D_\ell$}};
                \node at (R) {\scalebox{1}{$B$}};
                \node at (v) {\scalebox{1}{$w$}};
            \end{tikzpicture}}}
        \]
    \end{subfigure}
    \begin{subfigure}{\textwidth}\subcaption{Computation of $T\circ_v S\circ_w R$}
        \centering
        \[
            \vcenter{\hbox{\begin{tikzpicture}[scale=0.43]     
                \node (C1) at (-13.5, 12) {};
                \node (dot1) at (-11.5, 12) {};
                \node (C2) at (-9.5, 12) {};
                \node (C3) at (-6.5, 12) {};
                \node (dot2) at (-4.5, 12) {};
                \node (C4) at (-2.5, 12) {};
                \node (D1) at (-11.5, 9) {};
                \node (dot3) at (-8, 9) {};
                \node (D2) at (-4.5, 9) {};
                \node (C5) at (-1.5, 9) {};
                \node (dot4) at (0.5, 9) {};
                \node (C6) at (2.5, 9) {};
                \node (R) at (-3, 6) {};
                \node (C7) at (1, 6) {};
                \node (dot5) at (3, 6) {};
                \node (C8) at (5, 6) {};
                \node (B) at (0, 3) {};
                \node (A) at (0, 0) {};
                \node[isosceles triangle, isosceles triangle apex angle=60, draw, rotate=270, fill=white!50, minimum size=24pt] (C1T) at (C1) {};
                \node[isosceles triangle, isosceles triangle apex angle=60, draw, rotate=270, fill=white!50, minimum size=24pt] (C2T) at (C2) {};
                \node[isosceles triangle, isosceles triangle apex angle=60, draw, rotate=270, fill=white!50, minimum size=24pt] (C3T) at (C3) {};
                \node[isosceles triangle, isosceles triangle apex angle=60, draw, rotate=270, fill=white!50, minimum size=24pt] (C4T) at (C4) {};
                \draw[thick,double] (C1T.east)--(D1);
                \draw[thick,double] (C2T.east)--(D1);
                \draw[thick,double] (C3T.east)--(D2);
                \draw[thick,double] (C4T.east)--(D2);
                \node[isosceles triangle, isosceles triangle apex angle=60, draw, rotate=270, fill=white!50, minimum size=24pt] (C5T) at (C5) {};
                \node[isosceles triangle, isosceles triangle apex angle=60, draw, rotate=270, fill=white!50, minimum size=24pt] (C6T) at (C6) {};
                \node[isosceles triangle, isosceles triangle apex angle=60, draw, rotate=270, fill=white!50, minimum size=24pt] (C7T) at (C7) {};
                \node[isosceles triangle, isosceles triangle apex angle=60, draw, rotate=270, fill=white!50, minimum size=24pt] (C8T) at (C8) {};
                \node[isosceles triangle, isosceles triangle apex angle=60, draw, rotate=270, fill=white!50, minimum size=24pt] (D1T) at (D1) {};
                \node[isosceles triangle, isosceles triangle apex angle=60, draw, rotate=270, fill=white!50, minimum size=24pt] (D2T) at (D2) {};
                \draw[thick,double] (C5T.east)--(R);
                \draw[thick,double] (C6T.east)--(R);
                \draw[thick,double] (D1T.east)--(R);
                \draw[thick,double] (D2T.east)--(R);
                \draw[thick,double] (C7T.east)--(B);
                \draw[thick,double] (C8T.east)--(B);
                \node[isosceles triangle, isosceles triangle apex angle=60, draw, rotate=270, fill=white!50, minimum size=24pt] (RT) at (R) {};
                \draw[thick,double] (RT.east)--(B);
                \node[isosceles triangle, isosceles triangle apex angle=60, draw, rotate=270, fill=white!50, minimum size=24pt] (BT) at (B) {};
                \draw[thick,double] (BT.east)--(A);
                \node[isosceles triangle, isosceles triangle apex angle=60, draw, rotate=270, fill=white!50, minimum size=24pt] (AT) at (A) {};
                \node at (dot1) {$\cdots$};
                \node at (dot2) {$\cdots$};
                \node at (dot3) {$\cdots$};
                \node at (dot4) {$\cdots$};
                \node at (dot5) {$\cdots$};
                \draw[circle, fill=black] (C1T.east) circle [radius=2pt];
                \draw[circle, fill=black] (C2T.east) circle [radius=2pt];
                \draw[circle, fill=black] (C3T.east) circle [radius=2pt];
                \draw[circle, fill=black] (C4T.east) circle [radius=2pt];
                \draw[circle, fill=black] (C5T.east) circle [radius=2pt];
                \draw[circle, fill=black] (C6T.east) circle [radius=2pt];
                \draw[circle, fill=black] (C7T.east) circle [radius=2pt];
                \draw[circle, fill=black] (C8T.east) circle [radius=2pt];
                \draw[circle, fill=black] (D1T.east) circle [radius=2pt];
                \draw[circle, fill=black] (D2T.east) circle [radius=2pt];
                \draw[circle, fill=black] (RT.east) circle [radius=2pt];
                \draw[circle, fill=black] (BT.east) circle [radius=2pt];
                \draw[circle, fill=black] (AT.east) circle [radius=2pt];
                \node at (C1) {\scalebox{1}{$C$}};
                \node at (C2) {\scalebox{1}{$C$}};
                \node at (C3) {\scalebox{1}{$C$}};
                \node at (C4) {\scalebox{1}{$C$}};
                \node at (C5) {\scalebox{1}{$C$}};
                \node at (C6) {\scalebox{1}{$C$}};
                \node at (C7) {\scalebox{1}{$C$}};
                \node at (C8) {\scalebox{1}{$C$}};
                \node at (D1) {\scalebox{1}{$D$}};
                \node at (D2) {\scalebox{1}{$D$}};
                \node at (R) {\scalebox{1}{$R$}};
                \node at (B) {\scalebox{1}{$B$}};
                \node at (A) {\scalebox{1}{$A$}};
            \end{tikzpicture}}}
        \]
    \end{subfigure}
\end{figure}

Let us denote:
\[
x_n=\vcenter{\hbox{\begin{tikzpicture}[scale=0.25]     
    \node (R) at (3,-3) {};
    \node (1) at (0, 0) {};
    \node (dots) at (3, 0) {};
    \node (2) at (6, 0) {};
    \draw[thick] (R) -- (1);
    \draw[thick] (R) -- (2);
    \node at (dots) {$\cdots$};
    \draw[circle, fill=white] (R) circle [radius=24pt];
    \draw[circle, fill=white] (1) circle [radius=24pt];
    \draw[circle, fill=white] (2) circle [radius=24pt];
    \node at (R) {\scalebox{1}{$1$}};
    \node at (1) {\scalebox{1}{$2$}};
    \node at (2) {\scalebox{1}{$n$}};
\end{tikzpicture}}}  
\quad ; \quad
c_n=\vcenter{\hbox{\begin{tikzpicture}[scale=0.25]     
    \node (1) at (0, 0) {};
    \node (dots) at (3, 0) {};
    \node (2) at (6, 0) {};
    \node at (dots) {$\cdots$};
    \draw[circle, fill=white] (1) circle [radius=24pt];
    \draw[circle, fill=white] (2) circle [radius=24pt];
    \node at (1) {\scalebox{1}{$1$}};
    \node at (2) {\scalebox{1}{$n$}};
\end{tikzpicture}}}
\]
As in the operad $\PL$, the elements $x_n$ are the symmetric braces, see \cite{Brace}.

\begin{prp}
    The operad $\FRHT$ is generated by arity $2$ elements. It means that $\FRHT$ is generated by $x_2$ and $c_2$.
\end{prp}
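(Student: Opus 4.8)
The plan is to argue by strong induction on the number of vertices $n$ of an element $F\in\FRHT(\underline n)$, showing that $F$ lies in the suboperad generated by $x_2$ and $c_2$. The whole strategy rests on one observation about the insertion: when the chosen vertex $v_i$ has no children, the formal sum defining $S\circ_i T$ collapses to a \emph{single} term, since there are no subforests $C_j$ to redistribute over the vertices of $T$. This lets me peel off structure exactly, with no error terms to control, and is what keeps the argument free of any secondary induction.

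First I would dispose of the two families of building blocks. The forests $c_n$ are produced from $c_2$ by $c_n=c_2\circ_1 c_{n-1}$: inserting the forest $c_{n-1}$ at the isolated vertex $1$ of $c_2$ has empty $B$ and no children, so it merely places the components of $c_{n-1}$ alongside the remaining isolated vertex, giving $n$ isolated vertices. For the corollas $x_n$, I use that the weight-$0$ part $\FRHT_0=\RT$ is a suboperad (composing two rooted trees creates only simple edges, hence stays weight $0$), whose insertion operad is isomorphic to $\PL$ by the cited theorem; since $\PL$ is generated by its single binary operation, which corresponds to $x_2$, every rooted tree — in particular every $x_n$ — lies in $\langle x_2\rangle$.

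Now the induction. If $F=\tau_1\sqcup\cdots\sqcup\tau_p$ is disconnected with $p\geq2$, then $F=(\cdots(c_p\circ_p\tau_p)\cdots)\circ_1\tau_1$, because each insertion at an isolated vertex of $c_p$ is deterministic and simply glues in one component; every $\tau_i$ has fewer than $n$ vertices, so I conclude from the inductive hypothesis and $c_p\in\langle c_2\rangle$. If $F$ is a single rooted hypertree with no hyperedge, it is a rooted tree and the previous paragraph applies. The remaining case is a single rooted hypertree $\tau$ carrying a hyperedge $e$; let $u$ be the vertex of $e$ closest to the root, let $w_1,\dots,w_m$ (with $m\geq2$) be the other vertices of $e$, and let $G$ be the forest of the subtrees rooted at the $w_j$. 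Let $\tau''$ be $\tau$ with $e$ and everything above it deleted and replaced by a single new leaf $\star$ attached to $u$ by a simple edge. Then $\tau=\tau''\circ_\star G$: the insertion at the childless leaf $\star$ is deterministic, and by the defining rule a forest grafted at a vertex creates exactly one new edge joining all its roots to the parent $u$ of $\star$, which reconstructs the hyperedge $e$ together with its subtrees. Since $m\geq2$, both $\tau''$ and $G$ have strictly fewer than $n$ vertices, so the inductive hypothesis closes the argument.

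The only genuinely delicate points, which I would spell out carefully, are: that each edge of a rooted hypertree has a well-defined vertex closest to the root (so that $u$, and hence $\tau''$, is well defined), which follows from acyclicity; and the precise reading of the insertion rule in the two boundary regimes actually used — insertion at a childless leaf and insertion of an entire forest — which is what guarantees the single-term identities $c_n=c_2\circ_1 c_{n-1}$ and $\tau=\tau''\circ_\star G$. I expect the main conceptual obstacle to be organizing these reductions so that every step is an exact equality rather than an equality ``up to terms of the same size''; the choice to always insert at leaves or isolated vertices is exactly what removes that difficulty, after which everything else is bookkeeping on vertex counts.
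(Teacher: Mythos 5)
Your proof is correct and takes essentially the same route as the paper: induction, with $c_n$ obtained by iterated compositions of $c_2$, all rooted trees (hence the $x_n$) handled via the fact that $\FRHT_0=\RT\simeq\PL$ is generated by $x_2$, and general forests decomposed through insertions at childless vertices, which are single-term operations. Where the paper's inductive step merely asserts that any remaining forest ``can be obtained by inductively composing copies of $x_i$ and of $c_j$ at the leaves,'' you make this explicit by splitting off connected components through $c_p$ and peeling one hyperedge at a time via the exact identity $\tau=\tau''\circ_\star G$ --- a more detailed rendering of the same argument rather than a different one.
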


\begin{proof}
    Let $\Pp$ be the suboperad of $\FRHT$ generated by $\FRHT(2)$. Let us prove inductively that $\Pp=\FRHT$:
    \begin{itemize}
        \item Initial case: $\Pp(2)=\FRHT(2)$ by definition.
        \item Induction step: If $T=x_n$, then $T$ is a rooted tree, and thus in the suboperad generated by $x_2$ since $\PL$ is generated by arity $2$ elements. If $T=c_n$, then $T=(\dots(c_2\circ_1c_2)\circ_1\dots)\circ_1c_2$. Else, $T$ can be obtained by inductively composing copies of $x_i$ and of $c_j$ at the leaves.
    \end{itemize}
\end{proof}

\begin{thm}
    The operad $(\FRHT,\{\circ_i\})$ is isomorphic to the operad $\CPL$. Moreover, the morphism is given by $ x_2\mapsto x$ and $c_2 \mapsto c$.
\end{thm}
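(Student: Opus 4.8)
The plan is to construct an explicit morphism of operads $\Phi\colon\CPL\to\FRHT$ out of the presentation of $\CPL$, to prove that it is surjective, and then to upgrade it to an isomorphism by a dimension count coming from the two Hilbert series. First I would set $\Phi(x)=x_2$ and $\Phi(c)=c_2$ and check compatibility with the $\Sym_2$-action on the generators: the tree $x_2$ has root $1$ and child $2$, so $x_2.(1\,2)$ is the tree with root $2$ and child $1$ and must be the image of $y$; the forest $c_2$ is the unordered pair of singletons, hence fixed by $(1\,2)$, matching $c.(1\,2)=c$.

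The heart of the first step is to verify that the three defining relations of $\CPL$ hold between $x_2$ and $c_2$ under the insertion product $\{\circ_i\}$ of $\FRHT$. The pre-Lie relation holds because inserting a one-edge tree into a one-edge tree never leaves the rooted trees, so the suboperad generated by $x_2$ is the Chapoton--Livernet operad $\RT\cong\PL$, in which this relation is exactly the pre-Lie identity. The relation $c\circ_1 c-c\circ_2 c$ holds because inserting the forest $c_2$ at either vertex of $c_2$ produces, in both cases, the forest of three isolated vertices, so $c_2$ is a commutative and associative juxtaposition. The derivation relation is checked by computing $x\circ_1 c$, $c\circ_1 x$ and $c\circ_2 x$ directly from the insertion rule: inserting $c_2$ at the root of $x_2$ yields the two forests $\{1\!\to\!3\}\sqcup\{2\}$ and $\{1\}\sqcup\{2\!\to\!3\}$, the second of which is $c\circ_2 x$, while the first is the appropriate relabelling of $c\circ_1 x$, so that the three terms cancel. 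This produces a well-defined morphism $\Phi$.

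Next, $\Phi$ is surjective: by the preceding Proposition the operad $\FRHT$ is generated by $x_2$ and $c_2$, and both lie in the image, so $\Phi$ is onto in every arity. Finally I would compare Hilbert series. Putting $u=1$ in $f_\FRHT(t,u)=\rev\!\left(\tfrac{\ln(1+ut)}{u}\exp(-t)\right)$ gives $f_\FRHT(t,1)=\rev(\ln(1+t)\exp(-t))$, which is precisely the series $f_{\CPL}(t)=\rev(\log(1+t)\exp(-t))$ obtained in Corollary~\ref{thm:CPLK}. Hence $\dim_\K\CPL(\underline n)=\dim_\K\FRHT(\underline n)$ for all $n$, and a surjection between finite-dimensional spaces of equal dimension is a bijection; thus $\Phi$ is an isomorphism in each arity and therefore an isomorphism of operads, identifying $x_2$ with $x$ and $c_2$ with $c$.

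I expect the main obstacle to be the verification of the derivation relation. The difficulty is purely combinatorial bookkeeping: one must track the relabellings imposed by the partial compositions and apply the correct element of $\Sym_3$ when symmetrising $c\circ_1 x$, so that the two forests coming from $x\circ_1 c$ are matched exactly against $c\circ_1 x$ and $c\circ_2 x$. This is also the place where one must be careful that the forest-insertion rule --- which in general creates a new hyperedge when a forest is grafted below an existing vertex --- is applied correctly, even though for these particular quadratic terms no hyperedge is produced.
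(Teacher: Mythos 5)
Your proposal is correct and follows essentially the same route as the paper: verify that $x_2$ and $c_2$ satisfy the $\CPL$ relations under the insertions (the paper does this via its displayed example computations), obtain a surjection $\CPL\to\FRHT$ from the generation-in-arity-$2$ proposition, and conclude bijectivity from the equality of Hilbert series. Your caution about the ``correct element of $\Sym_3$'' is warranted: the derivation relation holds with $(c\circ_1 x).(2\;3)$, and the $(1\;3)$ appearing in the paper's stated presentation of $\CPL$ is a typo (the paper itself uses $(2\;3)$ for this same relation in Section~\ref{sec:red}), but this does not affect the argument.
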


\begin{proof}
    The example of computation show that $\FRHT$ satisfies the relations of $\CPL$. Since, $\FRHT$ is generated by arity $2$ elements, we have a surjective morphism $\CPL\to\FRHT$. The equality of the Hilbert series show that this morphism is bijective.
\end{proof}

\section{Forests of rooted Greg hypertrees}
Now that we have a combinatorial description of the operad $\CPL$, we want an analogue of the operad $\greg$ in this context. Let us define the $\CG$ operad by the following presentation:
\begin{multline*}
    \Tt\{x,y,c,g\}/\langle (x\circ_1 x - x\circ_2 x) - (x\circ_1 x - x\circ_2 x).(2\;3),\\ x\circ_1 c - (c\circ_1 x).(1\;3) - c\circ_2 x,\; c\circ_1 c - c\circ_2 c,\\ x\circ_1 g - (g\circ_1 x).(2\;3) - g\circ_2 x \rangle
\end{multline*}
with $x$, $y$, $c$ and $g$ binary operations, and the action of $\Sym_2$ on $\K\langle x,y,c,g\rangle$ is given by $x.(1\; 2)=y$, $c.(1\;2)=c$ and $g.(1\;2)=g$. Its Koszul dual, the operad $\CG^!$ is defined by the following presentation:
\begin{multline*}
    \Tt\{x_*,y_*,c_*,g_*\}/\langle x_*\circ_1 x_* - x_*\circ_2 x_*,\; x_*\circ_1 x_* - (x_*\circ_1 x_*).(2\;3),\\ x_*\circ_2c_*,\; c_*\circ_2 x_*-x_*\circ_1 c_* ,\; c_*\circ_1 c_* - (c_*\circ_1c_*).(2\;3) - c_*\circ_2 c_*,\\ x_*\circ_1 g_* - g_*\circ_2 x_*,\; x_*\circ_2g_*,\; c_*\circ_1 g_*,\; g_*\circ_1 c_*,\;g_*\circ_1 g_* \rangle
\end{multline*}

\begin{rmk}
    Let us denote $\vee$ the coproduct of operads, and for $\Pp$ an operad, $\vee_\Pp$ the fibered coproduct of operads over $\Pp$. One may remark that $\CG=\CPL\vee_{\PL}\greg$. This is not enough to show that $\CG$ is Koszul, however as we know a description of the free $\CPL^!$--algebras and $\greg^!$--algebras see \cite{Me}, we can guess a description of the free $\CG^!$--algebras, and show that $\CG^!$ is Koszul.
\end{rmk}

\begin{dfn}
    Let $X$ be a finite alphabet. Let $\mathbf{Ar}(X)$ be the span of finite words on $X$ with the following extra decoration: there is an arrow from one letter to another. Let $Ar(X)$ be the quotient of $\mathbf{Ar}(X)$ by the following relations: letters commute with each other (the arrow follows the letters), reverting the arrow change the sign and 
    $$\overset{\curvearrowright}{ab}cv=\overset{\curvearrowright}{cb}av+\overset{\curvearrowright}{ac}bv$$
    for any $a,b,c\in X$ and $v$ a finite word. Because the letters commute, we can write the elements of $Ar(X)$ with the arrow going from the first letter to the second one.\\
    Let $LCA(X)=LC(X)\oplus Ar(X)$ and let us define $\cdot.\cdot$, $[\cdot,\cdot]$ and $\{\cdot,\cdot\}$ on $LCA(X)$ by:
    \begin{itemize}
        \item $\{a\otimes v,b\otimes w\}=\overset{\curvearrowright}{ab}vw$ for $a,b\in X$,
        \item $(\overset{\curvearrowright}{ab}v).(c\otimes w)=\overset{\curvearrowright}{ab}vcw$ for $c\in X$,
        \item and $\cdot.\cdot$ and $[\cdot,\cdot]$ are the same as in $LC(X)$.
    \end{itemize}
    All other cases give $0$.
\end{dfn}

\begin{prp}
    The algebra $(LCA(X),\cdot.\cdot,[\cdot,\cdot],\{\cdot,\cdot\})$ is a $(\CG)^!$--algebra generated by $X$.
\end{prp}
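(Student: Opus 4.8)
The plan is to verify directly that the operations $\cdot.\cdot$, $[\cdot,\cdot]$ and $\{\cdot,\cdot\}$ on $LCA(X)$ satisfy all the defining relations of $\CG^!$, with $x_*$ acting as $\cdot.\cdot$, $c_*$ as $[\cdot,\cdot]$, and $g_*$ as $\{\cdot,\cdot\}$. Since $\CG^!$ is presented by generators and relations, giving such an algebra structure amounts to checking that the ten quadratic relations hold on arbitrary elements of $LCA(X)$. The relations involving only $x_*$ and $c_*$ are already known to hold on the subalgebra $LC(X)\subset LCA(X)$, since $(LC(X),\cdot.\cdot,[\cdot,\cdot])$ is a $\CPL^!$--algebra and the $x_*,c_*$ relations of $\CG^!$ coincide with those of $\CPL^!$; so I would first dispatch these by invoking that earlier result, being careful that the new summand $Ar(X)$ does not interfere (the key point is that $\cdot.\cdot$ and $[\cdot,\cdot]$ annihilate elements of $Ar(X)$ and output elements of $LC(X)$, so no relation mixing them with $Ar(X)$ can fail).

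Next I would check the relations involving $g_*$, namely $x_*\circ_1 g_* - g_*\circ_2 x_*$, $x_*\circ_2 g_*$, $c_*\circ_1 g_*$, $g_*\circ_1 c_*$, and $g_*\circ_1 g_*$. Most of these are vanishing relations and follow almost immediately from the definition ``all other cases give $0$'': for instance $x_*\circ_2 g_*$ corresponds to computing $(a\otimes v).\{b\otimes w, d\otimes u\}$, and since $\{b\otimes w, d\otimes u\}=\overset{\curvearrowright}{bd}wu\in Ar(X)$, which is not of the form $(\text{letter})\otimes(\text{word})$ with first tensor factor in $\Vect(X)$, the product $\cdot.\cdot$ returns $0$; similarly $c_*\circ_1 g_*$, $g_*\circ_1 c_*$ and $g_*\circ_1 g_*$ vanish because $[\cdot,\cdot]$ and $\{\cdot,\cdot\}$ both return $0$ when fed an element of $Ar(X)$. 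The one genuinely substantive relation is $x_*\circ_1 g_* - g_*\circ_2 x_*$, which unwinds to the identity $\{a\otimes v, b\otimes w\}.(c\otimes u) = \{a\otimes v, (b\otimes w).(c\otimes u)\}$; the left side is $(\overset{\curvearrowright}{ab}vw).(c\otimes u)=\overset{\curvearrowright}{ab}vwcu$ while the right side is $\{a\otimes v,\, b\otimes(w.c.u)\}=\overset{\curvearrowright}{ab}v(wcu)$, and these agree because the letters commute in $Ar(X)$.

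I would then confirm that $\CG^!$ is in fact the correct target, i.e.\ that the relation $c_*\circ_2 x_* - x_*\circ_1 c_*$ and the Lie--Jacobi relation $c_*\circ_1 c_* - (c_*\circ_1 c_*).(2\;3) - c_*\circ_2 c_*$ hold; these are again inherited from the $\CPL^!$--structure on $LC(X)$, so no new computation beyond the commutative-associative and Lie bracket identities is needed. Finally I must argue that $X$ generates $LCA(X)$: the elements $a\otimes 1$ already generate $LC(X)$ as a $\CPL^!$--algebra (by the earlier definition), and every generator $\overset{\curvearrowright}{ab}$ of $Ar(X)$ is obtained as $\{a\otimes 1, b\otimes 1\}$, after which the $\cdot.\cdot$ action builds up the trailing word $v$; so the whole of $LCA(X)$ lies in the subalgebra generated by $X$.

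The main obstacle I anticipate is not any single relation but ensuring the bracket $\{\cdot,\cdot\}$ is \emph{well-defined} modulo the defining relations of $Ar(X)$ — in particular that the antisymmetry of the arrow and the ``Arnold-type'' relation $\overset{\curvearrowright}{ab}cv=\overset{\curvearrowright}{cb}av+\overset{\curvearrowright}{ac}bv$ are respected by, and consistent with, the $g_*$ relations. Concretely, the relation $x_*\circ_1 g_* - g_*\circ_2 x_*$ being symmetric in its arguments places a compatibility constraint, and I would need to confirm that the quotient relations imposed on $Ar(X)$ are exactly those forced by the $\CG^!$ presentation, so that $\{\cdot,\cdot\}$ descends to a well-defined operation rather than an overdetermined or underdetermined one. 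This amounts to checking that the symmetry/Arnold relations defining $Ar(X)$ match the consequences of applying the $\CG^!$ relations to the symmetrized generators, which is the one place where a careful bookkeeping argument is required.
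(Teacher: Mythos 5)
Your overall strategy --- verifying the ten defining relations of $\CG^!$ directly and then arguing generation --- is exactly the paper's proof (which is literally ``direct computations'' plus the observation that $\overset{\curvearrowright}{ab}v=\{a\otimes v,b\otimes \varepsilon\}$), and your treatment of the $g_*$ relations and of generation is essentially right. However, your dispatching of the $x_*,c_*$ relations rests on a false claim: it is \emph{not} true that $\cdot.\cdot$ ``annihilates elements of $Ar(X)$ and outputs elements of $LC(X)$''. The paper defines $(\overset{\curvearrowright}{ab}v).(c\otimes w)=\overset{\curvearrowright}{ab}vcw$, a nonzero element of $Ar(X)$ --- indeed your own verification of $x_*\circ_1 g_* - g_*\circ_2 x_*$ uses precisely this. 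Consequently, the relations involving only $x_*$ (associativity $(p.q).r=p.(q.r)$ and the permuted relation $(p.q).r=(p.r).q$) and the relation $x_*\circ_2 c_*$ are not automatic when the \emph{first} argument lies in $Ar(X)$, and your argument as written skips these cases. They do hold --- $\overset{\curvearrowright}{ab}vcwdu=\overset{\curvearrowright}{ab}vducw$ by commutativity of letters, and $(\overset{\curvearrowright}{ab}v).[q,r]=0$ because the first tensor factor $[q_1,r_1]$ of a bracket does not lie in $\Vect(X)$ --- so the gap is easily repaired, but the checks are genuinely needed and use the same letter-commutativity computation you reserve for the $g_*$ relation. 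The same imprecision occurs in your justification of $g_*\circ_1 c_*=0$: the inner output $[p,q]$ lies in $LC(X)$, not in $Ar(X)$; the vanishing of $\{[p,q],r\}$ is instead because the Lie factor of $[p,q]$ has degree at least two, so it is not of the form $(\text{letter})\otimes(\text{word})$.

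Your closing concern is also somewhat misplaced. For this proposition one does not need the relations defining $Ar(X)$ to be ``exactly those forced by the $\CG^!$ presentation'' --- that would amount to a freeness statement, which the paper only establishes later through the dimension count $\dim(\Mult(LCA(\{a,b,c,d\})))=27$ matched against the $27$ normal forms of the rewriting system. What is actually required here is much weaker: that the operations are well defined on the quotient $Ar(X)$, i.e.\ that appending a fixed word of letters (the left $\cdot.\cdot$ action) and the bracket $\{\cdot,\cdot\}$ are compatible with letter commutativity, arrow reversal, and the relation $\overset{\curvearrowright}{ab}cv=\overset{\curvearrowright}{cb}av+\overset{\curvearrowright}{ac}bv$. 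This is immediate, since all three relations are preserved by appending a word; note also that arrow reversal together with commutativity gives $\{p,q\}=-\{q,p\}$, matching the sign of the $\Sym_2$-action on $g_*$. With the missing $Ar(X)$-input cases checked and the well-definedness phrased this way, your proof is complete and coincides with the paper's.
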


\begin{proof}
    Direct computations show that $LCA(X)$ is a $(\CG)^!$--algebra. Moreover, it is generated by $X$ since $LC(X)$ is generated by $X$ and $\overset{\curvearrowright}{ab}v=\{a\otimes v,b\otimes \varepsilon\}$.
\end{proof}

\begin{prp}
    We have $\dim(\Mult(LCA(\{a,b,c,d\})))=27$.
\end{prp}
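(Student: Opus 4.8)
The plan is to exploit the direct-sum decomposition $LCA(X)=LC(X)\oplus Ar(X)$ from the definition. This decomposition is by construction compatible with the multidegree (the number of occurrences of each letter), so the multilinear part splits as $\Mult(LCA(\{a,b,c,d\}))=\Mult(LC(\{a,b,c,d\}))\oplus\Mult(Ar(\{a,b,c,d\}))$, and it suffices to compute the two summands separately. The first is immediate: by the proposition computing the multilinear part of $LC$, one has $\dim\Mult(LC(\{a,b,c,d\}))=u_4=24$. So the whole content is the computation of the second summand, which I claim equals $3$.

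To compute $\dim\Mult(Ar(\{a,b,c,d\}))$, first I would observe that a multilinear element uses each of $a,b,c,d$ exactly once and that, since the letters commute (the arrow following them), such a monomial is completely determined by the ordered pair (tail, head) of its arrow, the remaining two letters being passengers. Before imposing relations this gives $4\cdot3=12$ monomials, which I identify with the oriented edges of the complete graph on $\{a,b,c,d\}$; write $(p,q)$ for the class of the monomial whose arrow runs from $p$ to $q$. Reverting the arrow (the second defining relation) is exactly antisymmetry $(p,q)=-(q,p)$, cutting the span down to the $\binom42=6$-dimensional space of unoriented edges. The relation $\overset{\curvearrowright}{ab}cv=\overset{\curvearrowright}{cb}av+\overset{\curvearrowright}{ac}bv$, with $v$ the fourth letter, specializes to $(a,b)=(c,b)+(a,c)$; rewritten via antisymmetry this is the vanishing of the oriented sum around the triangle on $\{a,b,c\}$. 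Thus $\Mult(Ar(\{a,b,c,d\}))$ is the edge space of the complete graph modulo the four triangle relations, one per triple of letters.

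Next I would pin down the rank of these four triangle relations. They span a $3$-dimensional subspace of the $6$-dimensional edge space: the four relation vectors satisfy exactly one linear dependence (the alternating sum of the four triangles vanishes), and any three of them are independent. Hence $\dim\Mult(Ar(\{a,b,c,d\}))=6-3=3$. Conceptually this records the familiar fact that antisymmetric, ``curl-free'' edge labelings of the complete graph on $X$ are precisely the coboundaries of a potential $\phi\colon X\to\K$ defined up to an additive constant, so that $\dim\Mult(Ar(X))=|X|-1$; here $4-1=3$. Combining the two summands gives $\dim\Mult(LCA(\{a,b,c,d\}))=24+3=27$.

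The hard part is the bookkeeping inside $Ar$: I must check that, in the multilinear regime, the defining relation reduces to nothing more than antisymmetry together with the triangle (cocycle) relations, and in particular that because the letters commute, singling out the \emph{other} passenger to move into the arrow merely reproduces the relation attached to a different triangle rather than producing a genuinely new constraint. Once this is established, determining the rank of the four triangle relations (equivalently, the single dependence among them) is a short linear-algebra check, and the $LC$ contribution requires no further work beyond citing the logarithmic-number count already proven.
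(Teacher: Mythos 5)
Your proposal is correct and takes essentially the same route as the paper: the paper likewise splits $\Mult(LCA(\{a,b,c,d\}))=\Mult(LC(\{a,b,c,d\}))\oplus\Mult(Ar(\{a,b,c,d\}))$, cites $u_4=24$ for the first summand, and asserts the second summand has dimension $3$. The only difference is that the paper dismisses the computation of $\dim(\Mult(Ar(\{a,b,c,d\})))=3$ as ``not difficult to check,'' whereas you carry it out explicitly (and correctly) by reducing the defining relation in the multilinear regime to antisymmetry plus the four triangle relations on the edge space of the complete graph on four vertices, whose rank is $3$.
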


\begin{proof}
    We have:
    $$\Mult(LCA(\{a,b,c,d\}))=\Mult(LC(\{a,b,c,d\}))\oplus\Mult(Ar(\{a,b,c,d\}))$$
    We already know the dimension of $\Mult(LC(\{a,b,c,d\}))$, and it is not difficult to check that $\Mult(Ar(\{a,b,c,d\}))$ is of dimension $3$. Hence, $\Mult(LCA(\{a,b,c,d\}))$ is of dimension $27$.
\end{proof}

The rewriting system of $\CG^!$ is displayed in the appendix Supplementary Figures~1~and~2 with the rules not involving $g_*$ in the first one and the ones involving $g_*$ in the second one, it has $38$ rules. This rewriting system is obtained using the ``quantum$(x_*,y_*\; c_*\; g_*)$ permutation degree-lexicographic order''; see \cite{GB,Quant} for further details on shuffle operads and monomial orders. Moreover, this rewriting system has the following property:

\begin{prp}
    The exponential generating function of the number of normal forms of the rewriting system displayed in Supplementary Figures~1~and~2 is given by:
    $$f=-\ln(1-t)\exp(t)+t\exp(t)-exp(t)+1$$
    In particular it has $27$ normal forms in arity $4$.
\end{prp}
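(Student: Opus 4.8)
The plan is to split the normal forms of the rewriting system according to the number of occurrences of $g_*$, exactly as the normal forms of $\CPL^!$ were split using the grading by $c_*$ in the proposition above. First I would observe that the nineteen rules of Supplementary Figure~1 are \emph{verbatim} the rewriting system of $\CPL^!$, since they are precisely the rules of the $\CG^!$ system that do not involve $g_*$. Hence a normal form of $\CG^!$ containing no $g_*$ is literally a normal form of $\CPL^!$. By the preceding proposition these are enumerated by the logarithmic numbers $u_n$, so they contribute the summand $-\ln(1-t)\exp(t)$, matching the $\Mult(LC(X))$ part of $\Mult(LCA(X))$.

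The remaining task is to count the normal forms that do contain $g_*$, and to match them with the $\Mult(Ar(X))$ summand. Here I would first exploit the monomial rules of Supplementary Figure~2. The relation $g_*\circ_1 g_*$, together with the antisymmetry $g_*.(1\;2)=-g_*$ which forces the shuffle-partner $g_*\circ_2 g_*$ to reduce as well, excludes any normal form with two $g_*$ placed one above the other; the rule $x_*\circ_2 g_*$ (and the completed rules it generates) excludes two $g_*$ sitting in parallel branches, since any binary operation joining them feeds a $g_*$ into a forbidden slot. Likewise $c_*\circ_1 g_*$ and $g_*\circ_1 c_*$ and their completions forbid a $c_*$ from occurring together with a $g_*$. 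Thus every normal form containing $g_*$ has exactly one $g_*$, no $c_*$, and is built only from $x_*,y_*$ and a single $g_*$; this is the operadic shadow of the fact that in $LCA(X)=LC(X)\oplus Ar(X)$ every element of $Ar(X)$ carries exactly one arrow.

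To finish I would identify these one-$g_*$ normal forms with a basis of $\Mult(Ar(X))$, reading each rewriting step on a tree with one $g_*$ as an operation on the underlying word-with-arrow. The relation $x_*\circ_1 g_* - g_*\circ_2 x_*$ is exactly what moves the arrow onto a canonical pair of letters, the monomial $x_*\circ_2 g_*$ matches the vanishing of all but one placement of the commutative product $x_*$ on the target of the arrow, and the completion of these with the $\Perm$-rules on $\{x_*,y_*\}$ reproduces, sign for sign, the three-term relation $\overset{\curvearrowright}{ab}cv=\overset{\curvearrowright}{cb}av+\overset{\curvearrowright}{ac}bv$ defining $Ar(X)$. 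Since $\dim\Mult(Ar(\{a_1,\dots,a_n\}))=n-1$, these contribute $\sum_{n\geq1}(n-1)\frac{t^n}{n!}=t\exp(t)-\exp(t)+1$, and adding the two contributions yields the stated generating function, with arity-$4$ coefficient $24+3=27$.

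The hard part will be the middle step: rigorously ruling out all configurations with two $g_*$ and all configurations mixing $c_*$ with $g_*$, because the quadratic monomial rules only forbid the immediate compositions, so one must invoke the full completed system (and the antisymmetry of $g_*$) to kill the parallel and non-adjacent placements. Equally delicate is checking that the reductions among one-$g_*$ trees reproduce the defining relations of $Ar(X)$ \emph{exactly}, so that the count is precisely $n-1$ and not merely bounded by it; this is what turns the inequality on dimensions into the exact enumeration of normal forms.
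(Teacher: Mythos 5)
Your decomposition by the presence of $g_*$, and your treatment of the $g_*$-free part, coincide with the paper's proof: the rules not involving $g_*$ are exactly those of $\CPL^!$, so those normal forms are counted by the logarithmic numbers $u_n$, and your structural observations (no normal form mixes $c_*$ with $g_*$, nor contains two occurrences of $g_*$) are also the paper's first remark. The gap is in how you count the normal forms containing $g_*$. Counting normal forms of a terminating rewriting system is a purely syntactic matter --- a tree monomial is a normal form if and only if it contains no left-hand side of a rule as a divisor --- and what is needed here is an \emph{upper} bound: every irreducible tree with one $g_*$ must be shown to lie in an explicit list of $n-1$ elements. Your route through $\Mult(Ar(X))$ cannot deliver this. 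For a terminating system one only knows that the dimension of the quotient is at most the number of normal forms, with equality precisely when the system is confluent; so matching the one-$g_*$ reductions with the defining relations of $Ar(X)$ and invoking $\dim\Mult(Ar(\{a_1,\dots,a_n\}))=n-1$ yields at best the lower bound that the number of one-$g_*$ normal forms is at least $n-1$, which is the wrong direction, and promoting it to an equality presupposes confluence --- exactly what the subsequent theorem derives from this very count, so the argument would be circular. You flag this difficulty yourself in your last paragraph, but you do not resolve it.

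The paper's resolution is direct and syntactic: with the chosen monomial order, an irreducible tree containing $g_*$ is necessarily a left comb whose spine consists of $x_*$'s with the unique $g_*$ at the top, and such a comb is entirely determined by the label of the second leaf of $g_*$; hence there are exactly $n-1$ of them, giving the contribution $t\exp(t)-\exp(t)+1$ and $24+3=27$ in arity $4$. Your remark that $x_*\circ_1 g_* - g_*\circ_2 x_*$ ``moves the arrow onto a canonical pair of letters'' is the right intuition, but to close the argument it must be recast as a reducibility statement about trees (every one-$g_*$ tree other than these combs contains the leading term of some rule of the completed system), not as a comparison with $Ar(X)$; the algebra $LCA(X)$ plays its role only afterwards, in the inequality $\dim\CG^!(4)\geq 27$ that combines with this count to prove the system is a Gr\"obner basis.
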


\begin{proof}
    One may remark that $c_*$ and $g_*$ cannot appear at the same time in a normal form. Hence, either $g_*$ appears or not. If $g_*$ does not appear, then we have a normal form of $\CPL^!$. If $g_*$ appears, then we have a left comb with only $g_*$ and $x_*$ appearing, and only one occurrence of $g_*$ at the top of the left comb. Hence, a normal form with $g_*$ appearing is entirely determined by the label of the second leave of $g_*$, hence we have $n-1$ such normal form in arity $n$. Computation of the exponential generating series show that it is:
    $$-\ln(1-t)\exp(t)+t\exp(t)-exp(t)+1$$
\end{proof}

\begin{thm}
    The operad $\CG^!$ is Koszul. 
\end{thm}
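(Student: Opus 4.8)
The plan is to deduce Koszulness from Proposition~\ref{prp:QGB} by showing that the quadratic rewriting system of $\CG^!$ recorded in Supplementary Figures~1~and~2 is convergent, hence a quadratic Gröbner basis. Termination is already guaranteed by the chosen monomial order, so the entire difficulty is confluence. Since $\CG^!$ is generated by binary operations, the leading term of every rewriting rule is an arity-$3$ shuffle tree (a composite of two binary operations), so two rules can overlap only inside an arity-$4$ shuffle tree. Consequently all critical pairs of the system live in arity $4$, and it suffices to check that every arity-$4$ critical pair is confluent; this is exactly the reduction used in the proof that $\CPL^!$ is Koszul.

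Rather than resolving the $38$ rules by hand, I would establish this confluence by a dimension count in arity $4$. The normal forms of a terminating rewriting system always span each component, so $\dim\CG^!(4)\leq 27$, the number of arity-$4$ normal forms computed in the previous proposition. For the reverse inequality I would use the explicit $\CG^!$-algebra $LCA(X)$: since it is generated by $X$, evaluating arity-$4$ operations on the four generators of $X=\{a,b,c,d\}$ yields a surjection $\CG^!(4)\twoheadrightarrow\Mult(LCA(\{a,b,c,d\}))$, whence $\dim\CG^!(4)\geq\dim\Mult(LCA(\{a,b,c,d\}))=27$. Combining the two bounds gives $\dim\CG^!(4)=27$.

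The equality $\dim\CG^!(4)=27$ forces the $27$ spanning normal forms to be a basis of $\CG^!(4)$, which is precisely what is needed. Indeed, given any arity-$4$ critical pair $p\mapsfrom m\mapsto q$, both branches reduce to normal forms representing the same class as $m$ in $\CG^!(4)$; linear independence of the normal forms then makes these two normal forms coincide, so the critical pair is confluent. Confluence of all (arity-$4$) critical pairs yields local confluence of the system, which, being terminating, is therefore convergent, and Proposition~\ref{prp:QGB} gives that $\CG^!$ is Koszul.

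The step I expect to carry the real content is the lower bound $\dim\CG^!(4)\geq 27$: it pins the dimension down and rules out any collapse among the normal forms, and it relies on $LCA(X)$ being genuinely realized as a $\CG^!$-algebra generated by $X$, so that the evaluation map is surjective onto the multilinear part. The reduction of confluence to arity $4$ and the passage from a dimension equality to confluence are formal; the crux is that $LCA$ supplies a tight lower bound matching the normal-form count exactly.
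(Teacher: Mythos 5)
Your proposal is correct and takes essentially the same approach as the paper: the paper's own (much terser) proof likewise combines the lower bound $\dim\CG^!(4)\geq 27$ obtained from the $\CG^!$--algebra $LCA(X)$ with the count of $27$ normal forms of the terminating rewriting system to conclude that the system is a Gr\"obner basis, hence that $\CG^!$ is Koszul by Proposition~\ref{prp:QGB}. You have simply made explicit the standard steps the paper leaves implicit (critical pairs living in arity $4$, and the dimension equality forcing the normal forms to be a basis, hence confluence).
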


\begin{proof}
    The inequality $\dim(\CG^!(4))\geq 27$ and the fact that the rewriting system admit $27$ normal forms in arity $4$ ensure that it is a Gröbner basis. Hence, $\CG^!$ is Koszul.
\end{proof}

\begin{crl}
    The operad $\CG$ is Koszul. Moreover, its Hilbert series is given by:
    $$ f_{\CG}(t) = \rev(\ln(1+t)\exp(-t)+t\exp(-t)+\exp(-t)-1) $$
    with $\rev$ the compositional inverse in $t$ of a series.
\end{crl}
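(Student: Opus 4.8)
The plan is to derive both assertions directly from the Koszulness of $\CG^!$ established in the previous theorem, together with Proposition~\ref{prp:KoszulHilbertSeries}. For the Koszulness of $\CG$ itself, I would invoke the standard fact that an operad is Koszul if and only if its Koszul dual is Koszul, combined with the self-duality $(\CG^!)^! = \CG$, valid because $\CG$ is quadratic and generated in arity $2$. Thus the Koszulness of $\CG^!$ transfers immediately to $\CG$, with no further work.

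For the Hilbert series, the first step is to identify $f_{\CG^!}$. Since the rewriting system of $\CG^!$ is a Gröbner basis---this is exactly what was used to prove that $\CG^!$ is Koszul---its normal forms form a basis of $\CG^!(n)$ in every arity $n$, not merely in arity $4$. Hence the exponential generating function of the number of normal forms, computed in the preceding proposition, coincides with the Hilbert series:
$$ f_{\CG^!}(t) = -\ln(1-t)\exp(t) + t\exp(t) - \exp(t) + 1 . $$

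The second step is to apply Proposition~\ref{prp:KoszulHilbertSeries} to the Koszul operad $\CG$, which yields the functional equation $f_{\CG}(-f_{\CG^!}(-t)) = t$. This identifies $f_{\CG}$ as the compositional inverse in $t$ of the series $-f_{\CG^!}(-t)$. Performing the substitution $t \mapsto -t$ and the overall sign change gives
$$ -f_{\CG^!}(-t) = \ln(1+t)\exp(-t) + t\exp(-t) + \exp(-t) - 1 , $$
so that $f_{\CG}(t) = \rev\!\left(\ln(1+t)\exp(-t) + t\exp(-t) + \exp(-t) - 1\right)$, exactly as claimed.

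The argument is essentially bookkeeping, so I do not expect a genuine obstacle: the mathematical content has already been spent in producing the Gröbner basis (guaranteeing that normal forms constitute a basis in all arities) and in the self-duality of the Koszul property. The only point demanding care is the sign tracking in passing from $f_{\CG^!}(t)$ to $-f_{\CG^!}(-t)$, where one must correctly turn the argument of the logarithm from $1-t$ into $1+t$ and negate each summand; a single slip there would corrupt the final formula.
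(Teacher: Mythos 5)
Your proposal is correct and takes essentially the same route as the paper, which states this corollary without further argument precisely because it follows from the Koszulness of $\CG^!$ by the standard facts you invoke: $(\CG^!)^!\cong\CG$ for a quadratic operad with binary generators, Koszulness passing to the Koszul dual, the identification of $f_{\CG^!}$ with the generating function of normal forms of the Gröbner basis, and Proposition~\ref{prp:KoszulHilbertSeries}. Your sign bookkeeping is also right: $-f_{\CG^!}(-t)=\ln(1+t)\exp(-t)+t\exp(-t)+\exp(-t)-1$, so $f_{\CG}=\rev\bigl(\ln(1+t)\exp(-t)+t\exp(-t)+\exp(-t)-1\bigr)$ as claimed.
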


Now that we know that the arity-wise dimensions of the operad $\CG$, we can describe the underlying species. Let us define a species that allows us to combine the Greg tree and the hypertrees. 

\begin{dfn}
    A \emph{forest of rooted Greg hypertrees} is a forest of rooted hypertrees with two kinds of vertices, the white and the black vertices such that the white vertices are labeled, and the black vertices are unlabeled and have at least two incoming edges. The species $\FRGHT$ of forests of rooted Greg hypertrees is the species such that $\FRGHT(L)$ is the set of forests of rooted Greg hypertrees labeled by the finite set $L$. The species $\FRGHT$ is bi-graded by the weight of the forests of rooted hypertrees and by the number of black vertices, to avoid confusion, let us call then respectively the \emph{hypertree weight} and the \emph{Greg weight}. In particular, $\FRGHT_{0,0}=\RT$, $\FRGHT_{0,k}=\RGT_k$ and $\FRGHT_{k,0}=\FRHT_k$. 
\end{dfn}

Let $u$ be a formal variable encoding the hypertree weight grading, and $v$ be a formal variable encoding the Greg weight grading. This means that $f_\FRGHT(t,u,v)=\sum a_{i,j,k}\frac{t^iu^jv^k}{i!}$ where $a_{i,j,k}$ is the number of forests of rooted Greg hypertrees with $i$ white vertices, of hypertree weight $j$ and of Greg weight $k$. 

\begin{prp}
    We have:
    $$ f_\FRGHT(t,u,v)=\rev\left(\left(\frac{\ln(1+ut)}{u}-v(\exp(t)-t-1) \right) \exp(-t) \right) $$
    In particular the sequence of forests of rooted Greg hypertrees with $n$ white vertices is given by Sequence~\oeis{A364816}.
\end{prp}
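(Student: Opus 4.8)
The plan is to follow verbatim the proof of the formula for $f_\FRHT$, carrying along the extra grading variable $v$ that records black vertices. Write $\mathcal{RGH}$ for the species of (single) rooted Greg hypertrees, so that a forest is a non-empty set of such trees, weighted so that passing from a set of trees to a forest costs one unit of hypertree weight per extra tree. This is expressed by $\FRGHT = \frac{1}{u}E_{\geq 1}(u.\mathcal{RGH})$, exactly as in the hypertree case. Passing to generating functions gives $f_\FRGHT = \frac{1}{u}(\exp(u f_{\mathcal{RGH}})-1)$, equivalently $f_{\mathcal{RGH}} = \frac{\ln(1+u f_\FRGHT)}{u}$; the variable $v$ is carried along transparently, since Greg weight is additive under disjoint union and the $E_{\geq 1}$ construction respects it.

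Next I would decompose a rooted Greg hypertree according to the colour of its root. If the root is white it carries a label and an arbitrary set of forests grafted above it, one forest per incident edge, contributing $X.E(\FRGHT)$. If the root is black it is unlabelled, contributes one unit of Greg weight, and by definition has at least two incident edges, hence at least two grafted forests, contributing $v.E_{\geq 2}(\FRGHT)$. This gives the species identity $\mathcal{RGH} = X.E(\FRGHT) + v.E_{\geq 2}(\FRGHT)$, which is precisely the hypertree identity $\RHT = X.E(\FRHT)$ augmented by a black-root term. On generating functions this reads $f_{\mathcal{RGH}} = t\exp(f_\FRGHT) + v(\exp(f_\FRGHT)-1-f_\FRGHT)$, the subtracted terms removing the forbidden sets of size $0$ and $1$ at a black vertex.

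Finally I would eliminate $f_{\mathcal{RGH}}$ between the two relations. Writing $F=f_\FRGHT$, the equality $\frac{\ln(1+uF)}{u} = t\exp(F)+v(\exp(F)-1-F)$ can be solved for $t$ by multiplying through by $\exp(-F)$, yielding $t = \left(\frac{\ln(1+uF)}{u} - v(\exp(F)-F-1)\right)\exp(-F)$, which is exactly the assertion that $F$ is the claimed compositional inverse. The one genuinely delicate point is the bookkeeping in the second step: one must check that the hyperedge joining the root to an attached forest of $k$ trees has weight $k-1$, so that the total hypertree weight of a tree is correctly recovered as the sum of the weights of its attached forests, and that the condition ``at least two incoming edges'' on black vertices matches $E_{\geq 2}$ rather than a count of individual subtrees grafted at the root. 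Once this is settled the algebra is routine, and setting $u=v=1$ and reading off coefficients identifies the white-vertex counts with Sequence~\oeis{A364816}.
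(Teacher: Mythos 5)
Your proposal is correct and takes essentially the same route as the paper: the paper's proof uses exactly your two species identities, $\FRGHT=\frac{1}{u}E_{\geq 1}(u.\mathcal{GH})$ and $\mathcal{GH}=X.E(\FRGHT)+vE_{\geq 2}(\FRGHT)$ for the species $\mathcal{GH}$ of single rooted Greg hypertrees, and then eliminates $f_{\mathcal{GH}}$ to solve for $t$. If anything, your write-up is more careful than the paper's, whose displayed decomposition contains a slip (it writes $vE_{\geq 2}(\FRHT)$ and mixes $f_\FRHT$ with $f_\FRGHT$ in the resulting functional equation, though the final formula corresponds to the consistent choice $\FRGHT$ that you made), and your remark on the weight bookkeeping for a hyperedge carrying $k$ trees is precisely the point that makes the first identity valid.
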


\begin{proof}
    Let us inspect the species $\FRGHT$ and $\mathcal{GH}$ of rooted Greg hypertrees. We have:
    $$ \FRGHT=\frac{1}{u}E_{\geq 1}(u.\mathcal{GH}) $$
    and:
    $$ \mathcal{GH}=X.E(\FRGHT) + vE_{\geq 2}(\FRHT) $$
    Hence, we have:
    $$ f_\mathcal{GH}=\frac{\ln(1+u.tf_\FRGHT)}{u}$$
    and:
    $$ \frac{\ln(1+u.tf_\FRGHT)}{u}=t\exp(f_\FRGHT)+v\exp(f_\FRHT)-vf_\FRGHT-v $$
    We get that:
    $$ f_\FRGHT(t,u,v)=\rev\left(\left(\frac{\ln(1+ut)}{u}-v(\exp(t)-t-1) \right) \exp(-t) \right) $$
\end{proof}

Same as in Section~\ref{sec:FRHT}, one can define insertions and show that they define an operad structure on $\FRGHT$.

\begin{dfn}
    Let $S$ and $T$ be two forests of rooted Greg hypertrees and $i$ be a label of a vertex $v_i$ of $S$. Let $B$ be the tree bellow the vertex $v_i$ in $S$, and $C=\{C_1,\dots,C_n\}$ be the set of forests of children of $v_i$ in $S$ such that each rooted Greg hypertrees that are grafted at $v_i$ by the same edge are in the same forest. The \emph{insertion} of $T$ in $S$ at the vertex $i$ denoted $S\circ_i T$ is the formal sum of all possible way to graft the set of forests of rooted Greg hypertrees $C_1,\dots,C_n$ on black or white vertices of $T$ such that each rooted Greg hypertrees of $C_j$ are grafted at $T$ by the same edge, and then grafting the result on the parent of $v_i$ in $B$. If $T$ is a forest it creates a unique hyperedge that connects all its rooted Greg hypertrees to the parent of $v_i$. 
\end{dfn}

The same computations show that:
 
\begin{prp}
    The insertions satisfy the parallel and sequential axioms. Hence, they give a structure of an operad on the species of forest of rooted Greg hypertrees.
\end{prp}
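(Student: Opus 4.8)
The plan is to follow, essentially verbatim, the verification already carried out for $\FRHT$ (Figure~\ref{fig:FRHTseq}) and for $\greg$, since the two features added here—black Greg vertices and hyperedges—interact with the composition independently. First I would dispose of the parallel axiom. If $i$ and $k$ are two distinct labels of $S$, then the insertions $\circ_i$ and $\circ_k$ alter disjoint pieces of $S$: the children-forests of $v_i$ together with the part below $v_i$, versus the corresponding data at $v_k$. Since these neighbourhoods do not overlap, the two insertions commute and $(S\circ_i T)\circ_k R=(S\circ_k R)\circ_i T$. This is word-for-word the argument used for forests of rooted hypertrees and needs no change in the Greg setting.

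The substance is the sequential axiom, which I would establish by reproducing the computation of Figure~\ref{fig:FRHTseq} with black vertices allowed. Take $T$ with white vertex $v$ (below it the part $A$, children-forests $C_1,\dots,C_k$) and $S$ with white vertex $w$ (below it the part $B$, children-forests $D_1,\dots,D_\ell$); the axiom asserts $(T\circ_v S)\circ_w R=T\circ_v(S\circ_w R)$. Both sides unfold to the single formal sum depicted in Figure~\ref{fig:FRHTseq}(c): each forest $D_b$ is grafted, via one hyperedge, onto a vertex of $R$, each forest $C_a$ is grafted, via one hyperedge, onto a vertex of the tree already obtained from $S$ after $R$ has been inserted at $w$, and the whole is reconnected below the parent of $v$. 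The only new freedom relative to $\FRHT$ is that a grafting vertex may now be black as well as white; but the collection of available vertices, and the hyperedge that each grafted forest creates, are the same no matter in which order the two compositions are performed, so the two sums agree term by term.

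The hard part will be purely a matter of simultaneous bookkeeping rather than a conceptual difficulty: one must check at once that the hyperedge structure is respected—each $C_a$ and each $D_b$ is grafted by a single (hyper)edge, exactly as enforced in the $\FRHT$ computation—and that the Greg condition is preserved, i.e.\ every black vertex still has at least two incoming edges after insertion. The latter is automatic, since we only ever insert at a labelled, hence white, vertex: no incoming edge of a black vertex is ever deleted, and grafting a forest onto a black vertex can only increase its number of incoming edges, so each black vertex keeps the valence it had in $T$, $S$ or $R$. Because the forest/hyperedge distribution is identical to the $\FRHT$ case and the choice of grafting onto black-or-white vertices is identical to the $\greg$ case, superposing the two already-verified computations yields the sequential axiom, and hence the desired operad structure on $\FRGHT$.
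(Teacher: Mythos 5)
Your proof is correct and takes essentially the same route as the paper, whose entire argument for this proposition is that ``the same computations'' as in the $\FRHT$ case apply: the parallel axiom by disjointness of the modified regions, and the sequential axiom by reproducing the computation of Figure~\ref{fig:FRHTseq} with graftings now also permitted on black vertices. Your explicit check that every black vertex retains at least two incoming edges (so the insertions land in $\FRGHT$) is a well-definedness point the paper leaves implicit, not a divergence in method.
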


Let us denote:
\[
x_n=\vcenter{\hbox{\begin{tikzpicture}[scale=0.25]     
    \node (R) at (3,-3) {};
    \node (1) at (0, 0) {};
    \node (dots) at (3, 0) {};
    \node (2) at (6, 0) {};
    \draw[thick] (R) -- (1);
    \draw[thick] (R) -- (2);
    \node at (dots) {$\cdots$};
    \draw[circle, fill=white] (R) circle [radius=24pt];
    \draw[circle, fill=white] (1) circle [radius=24pt];
    \draw[circle, fill=white] (2) circle [radius=24pt];
    \node at (R) {\scalebox{1}{$1$}};
    \node at (1) {\scalebox{1}{$2$}};
    \node at (2) {\scalebox{1}{$n$}};
\end{tikzpicture}}}  
\quad ; \quad
c_n=\vcenter{\hbox{\begin{tikzpicture}[scale=0.25]     
    \node (1) at (0, 0) {};
    \node (dots) at (3, 0) {};
    \node (2) at (6, 0) {};
    \node at (dots) {$\cdots$};
    \draw[circle, fill=white] (1) circle [radius=24pt];
    \draw[circle, fill=white] (2) circle [radius=24pt];
    \node at (1) {\scalebox{1}{$1$}};
    \node at (2) {\scalebox{1}{$n$}};
\end{tikzpicture}}}
\quad ; \quad
g_n=\vcenter{\hbox{\begin{tikzpicture}[scale=0.25]     
    \node (R) at (3,-3) {};
    \node (1) at (0, 0) {};
    \node (dots) at (3, 0) {};
    \node (2) at (6, 0) {};
    \draw[thick] (R) -- (1);
    \draw[thick] (R) -- (2);
    \node at (dots) {$\cdots$};
    \draw[circle, fill=black] (R) circle [radius=24pt];
    \draw[circle, fill=white] (1) circle [radius=24pt];
    \draw[circle, fill=white] (2) circle [radius=24pt];
    \node at (1) {\scalebox{1}{$1$}};
    \node at (2) {\scalebox{1}{$n$}};
\end{tikzpicture}}} 
\]

\begin{prp}
    The operad $\FRGHT$ is generated by arity $2$ elements.
\end{prp}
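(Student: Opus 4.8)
The plan is to adapt the proof that $\FRHT$ is generated in arity $2$, enlarging the collection of basic building blocks by the black corollas $g_n$. I would set $\Pp$ to be the suboperad of $\FRGHT$ generated by $\FRGHT(2)$, that is by $x_2$, $c_2$ and $g_2$, and prove $\Pp=\FRGHT$ by induction on the number of white vertices; arity at most $2$ holds by definition of $\Pp$.

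For the three families of corollas I would argue separately. Since $x_n$ is an ordinary rooted tree it lies in the suboperad generated by $x_2$, because $\PL$ is generated in arity $2$; and $c_n=(\cdots(c_2\circ_1 c_2)\circ_1\cdots)\circ_1 c_2$ lies in the suboperad generated by $c_2$. For $g_n$ I would use that composing single trees never produces a hyperedge, so the suboperad generated by $x_2$ and $g_2$ is exactly $\greg$; as $\greg$ has a binary presentation, every $g_n$ lies in $\Pp$. Concretely the insertion $x_2\circ_1 g_2$ equals $g_3$ plus the two trees $(g_2\circ_1 x_2).(2\;3)$ and $g_2\circ_2 x_2$, whence $g_3\in\Pp$, and iterating yields all $g_n$.

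The substance of the argument is the remaining case of a forest that is none of the above corollas. I would peel structure off the top. A forest of several trees is obtained from $c_m$ by inserting each of its trees at the isolated vertices of $c_m$, so one reduces to a single rooted Greg hypertree $\tau$ with root $r$. I would first reconstruct the depth-one neighbourhood of $r$: starting from $x_{s+h+1}$ when $r$ is white, or from $g_{s+h}$ when $r$ is black, where $s$ and $h$ are the numbers of simple edges and of hyperedges at $r$, I insert a copy of $c_{m_k}$ at the leaf of the $k$-th chosen branch, which by definition of the insertion turns that simple edge into a hyperedge joining $m_k$ children. Inserting the maximal subtrees hanging below the children of $r$ then recovers $\tau$; every intermediate forest has strictly fewer white vertices, so the induction hypothesis applies and $\tau\in\Pp$.

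I expect the decomposition of the general case to be the main obstacle, since it is where the three phenomena kept separate in the $\FRHT$ and $\greg$ arguments must be handled at once: simple edges, hyperedges produced by inserting the forest-type generator $c_m$, and black vertices constrained to have at least two incoming edges. The delicate points are to verify that each insertion creates exactly the intended hyperedge, with no spurious edges and no extra summands, and that the black root never falls below two incoming edges during the reconstruction. Once this bookkeeping is settled the induction closes verbatim as for $\FRHT$.
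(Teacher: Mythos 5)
Your proof is correct and takes essentially the same route as the paper: the paper's argument is the same induction, disposing of the corollas $x_n$, $g_n$ via the binary generation of $\greg$, of $c_n$ via that of $\FRHT$, and then noting that any other forest is obtained by inductively composing copies of $x_i$, $c_j$ and $g_k$ at the leaves. Your write-up simply makes explicit the leaf-insertion bookkeeping (hyperedges created by inserting $c_{m}$ at a leaf, reattachment of the subtrees below the root, preservation of the black-vertex condition) that the paper leaves implicit in its final sentence.
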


\begin{proof}\label{prp:bin}
    Let $\Pp$ the suboperad of $\FRGHT$ generated by $\FRGHT(2)$, let us prove by induction on the arity that $\Pp=\FRGHT$.
    \begin{itemize}
        \item Base case: by definition $\Pp(2)=\FRGHT(2)$.
        \item Induction step: let $T\in\FRGHT(n)$, if $T=x_n$ or $g_n$ then $T\in\Pp$ since $\greg$ is generated by arity $2$ elements. If $T=c_n$ then $T\in \Pp$ since $\FRHT$ is generated by arity $2$ elements. Else, $T$ can be obtained by inductively composing copies of $x_i$, $c_j$ and $g_k$ at the leaves.
    \end{itemize}
\end{proof}

\begin{thm}
    The operad $\FRGHT$ is isomorphic to $\CG$.
\end{thm}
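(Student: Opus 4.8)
The plan is to reproduce, in this richer setting, the argument used to identify $\FRHT$ with $\CPL$: first produce a surjective morphism of operads $\CG\to\FRGHT$, and then upgrade it to an isomorphism by comparing Hilbert series. The only genuinely new inputs needed are that the defining relations of $\CG$ hold among the arity $2$ generators $x_2$, $c_2$, $g_2$ of $\FRGHT$, and that the two Hilbert series coincide.

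First I would define the candidate morphism $\CG\to\FRGHT$ on generators by $x\mapsto x_2$, $c\mapsto c_2$ and $g\mapsto g_2$, extended equivariantly, and check that the four relations of the presentation of $\CG$ are satisfied. The key observation is that these relations split according to which generators they involve, mirroring the decomposition $\CG=\CPL\vee_{\PL}\greg$ noted in the remark above. The pre-Lie relation on $x$ together with the two relations $x\circ_1 c-(c\circ_1 x).(1\;3)-c\circ_2 x$ and $c\circ_1 c-c\circ_2 c$ are exactly the relations of $\CPL$; they can be verified inside the sub-species of $\FRGHT$ consisting of forests with no black vertex, which is closed under insertion and is precisely $\FRHT\cong\CPL$. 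Likewise the pre-Lie relation together with $x\circ_1 g-(g\circ_1 x).(2\;3)-g\circ_2 x$ are the relations of $\greg$, which can be verified inside the sub-species of single rooted Greg trees, closed under insertion and carrying the operad structure $\greg$ on $\RGT$. Crucially, the presentation of $\CG$ contains no relation mixing $c$ and $g$, so no computation beyond the two known cases is required, and the morphism $\CG\to\FRGHT$ is well defined.

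Next I would invoke the proposition that $\FRGHT$ is generated by its arity $2$ part, i.e.\ by $x_2$, $c_2$ and $g_2$; this makes the morphism $\CG\to\FRGHT$ surjective. To see that it is bijective, I would compare Hilbert series. Specializing the formula for $f_\FRGHT(t,u,v)$ at $u=v=1$ gives
$$ f_\FRGHT(t,1,1)=\rev\bigl(\ln(1+t)\exp(-t)+t\exp(-t)+\exp(-t)-1\bigr), $$
which is exactly $f_\CG(t)$ as computed from the Koszulness of $\CG$. Hence $\dim\CG(\underline{n})=\dim\FRGHT(\underline{n})$ for every $n$, and a surjection between finite-dimensional vector spaces of equal dimension is an isomorphism in each arity; this forces the operad morphism to be an isomorphism.

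The main obstacle is the relation-checking step, and more precisely making rigorous that the relations of $\CG$ genuinely reduce to the already-established cases. The point to be careful about is that the sub-species of $\FRGHT$ with no black vertices, and the sub-species of single rooted Greg trees, are each stable under the insertions of $\FRGHT$, and that on them the insertions restrict to the previously defined operad structures on $\FRHT$ and on $\greg$; once this is in place, the relation computations are literally those performed in the two earlier sections. The fact that there is no $c$--$g$ relation to verify is what makes this reduction complete, and is the structural reason the approach succeeds.
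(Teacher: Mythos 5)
Your proof has the same architecture as the paper's own: check the defining relations of $\CG$ on the arity $2$ elements to obtain a morphism $\CG\to\FRGHT$, get surjectivity from generation in arity $2$, and conclude bijectivity from the equality $f_\FRGHT(t,1,1)=f_\CG(t)$ of Hilbert series. Your refinement of the relation-checking step is also sound in principle: the sub-species of forests with no black vertices and the sub-species of single rooted Greg trees without hyperedges are indeed closed under the insertions of $\FRGHT$, and the insertions restrict on them to the operad structures of $\FRHT\cong\CPL$ and of $\RGT\cong\greg$ respectively, so relations involving only $\{x,c\}$ or only $\{x,g\}$ can be verified there.

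However, the key identity you propose to "verify inside $\RGT$" is false. The expression $x\circ_1 g-(g\circ_1 x).(2\;3)-g\circ_2 x$ is \emph{not} one of the relations of $\greg$ as presented in the paper (the $\greg$ relation is the antisymmetrization of this expression under $(2\;3)$), and it does not vanish in $\RGT$, nor in $\FRGHT$: writing $g_3$ for the corolla with black root and white leaves $1,2,3$, one computes
$$x\circ_1 g-(g\circ_1 x).(2\;3)-g\circ_2 x = g_3 \neq 0,$$
since the two subtracted terms cancel precisely the two summands of $x\circ_1 g$ in which the leaf $3$ is grafted onto a white vertex, leaving the grafting onto the black root. (The antisymmetrized relation does hold, because $g_3$ is $(2\;3)$-invariant.) This is not a pedantic point: imposing the non-antisymmetrized relation forces $g_3=0$, and the resulting quotient has dimension $37$ in arity $3$, whereas $\dim\FRGHT(\underline{3})=38$, which is the number produced by the Hilbert series $\rev\bigl(\ln(1+t)\exp(-t)+t\exp(-t)+\exp(-t)-1\bigr)$. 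So with the relation as you (and, literally, the paper's displayed presentation of $\CG$) state it, no morphism $\CG\to\FRGHT$ sending $g\mapsto g_2$ exists and the theorem itself would fail; the presentation must be read with the antisymmetrized Greg relation, exactly as the remark $\CG=\CPL\vee_{\PL}\greg$ dictates. Once the relation is corrected, your reduction to the two sub-operads goes through and the rest of your argument (surjectivity plus Hilbert series) is fine; but as written, the central step of your relation check asserts an identity that a one-line computation refutes.
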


\begin{proof}
    Computations show that the relations of $\CG$ are satisfied in the operad $\FRGHT$. Hence, we have a morphism $\CG\to\FRGHT$. Since $\FRGHT$ is generated by arity $2$ elements, the morphism is surjective. Moreover, we have $f_\FRGHT(t,1,1)=f_\CG(t)$. The equality of the Hilbert series shows that this morphism is bijective.
\end{proof}

\section{Reduced Rooted Greg hypertrees}\label{sec:red}

As we have seen in Section~\ref{sec:greg}, the link between the operad $\greg_{-1}$ and the operadic twisting of $\PL$ allowed us to prove that $H^*(\greg_{-1})=\Lie$ which is the suboperad of $\PL$ generated by the Lie bracket. To use the same idea for the operad $\CPL$, we would need to define a differential $d$ on $\CG$ such that $d(x)=g$ and $d(c)=0$. However, such a differential would not be compatible with the operad structure since we would have: 
$$d(0) = d(x\circ_1c - c\circ_2x - (c\circ_1x).(2\;3)) = g\circ_1c - c\circ_2g - (c\circ_1g).(2\;3) \neq 0$$
In order to fix this issue, we will need \emph{reduced} version of the operad $\CG$ which will not be Koszul, but on which such a differential can be defined.

\begin{dfn}
    Let us define the \emph{reduced} $\CG$ operad $\RCG$ by $$\RCG=\CG/\langle g\circ_1c - c\circ_2g - (c\circ_1g).(2\;3)\rangle$$
\end{dfn}

Let us describe the underlying species of $\RCG$ as a subspecies of $\FRGHT$. 

Let us study the rewriting rule $g\circ_1c \mapsto c\circ_2g + (c\circ_1g).(2\;3)$ at the level of the forests of rooted Greg hypertrees. It may be written the following way:
\[
    \vcenter{\hbox{\begin{tikzpicture}[scale=0.25]     
        \node (1) at (0, 0) {};
        \node (2) at (3, 0) {};
        \node (3) at (6, 0) {};
        \node (B) at (3, -3) {};
        \draw[thick] (3)--(B);
        \fill[gray] (B.north)--(1.south)--(2.south)--cycle;
        \draw[thick] (B.north)--(1.south);
        \draw[thick] (B.north)--(2.south);
        \draw[thick] (1.south)--(2.south);
        \draw[circle, fill=black] (B) circle [radius=24pt];
        \draw[circle, fill=white] (1) circle [radius=24pt];
        \draw[circle, fill=white] (2) circle [radius=24pt];
        \draw[circle, fill=white] (3) circle [radius=24pt];
        \node at (1) {\scalebox{1}{$1$}};
        \node at (2) {\scalebox{1}{$2$}};
        \node at (3) {\scalebox{1}{$3$}};
    \end{tikzpicture}}} \; \mapsto \;\vcenter{\hbox{\begin{tikzpicture}[scale=0.25]     
        \node (1) at (-1.5, 0) {};
        \node (2) at (3, -3) {};
        \node (3) at (1.5, 0) {};
        \node (B) at (0, -3) {};
        \draw[thick] (3)--(B);
        \draw[thick] (1)--(B);        
        \draw[circle, fill=black] (B) circle [radius=24pt];
        \draw[circle, fill=white] (1) circle [radius=24pt];
        \draw[circle, fill=white] (2) circle [radius=24pt];
        \draw[circle, fill=white] (3) circle [radius=24pt];
        \node at (2) {\scalebox{1}{$1$}};
        \node at (1) {\scalebox{1}{$2$}};
        \node at (3) {\scalebox{1}{$3$}};
    \end{tikzpicture}}} \;+\; \vcenter{\hbox{\begin{tikzpicture}[scale=0.25]     
        \node (1) at (-1.5, 0) {};
        \node (2) at (3, -3) {};
        \node (3) at (1.5, 0) {};
        \node (B) at (0, -3) {};
        \draw[thick] (3)--(B);
        \draw[thick] (1)--(B);        
        \draw[circle, fill=black] (B) circle [radius=24pt];
        \draw[circle, fill=white] (1) circle [radius=24pt];
        \draw[circle, fill=white] (2) circle [radius=24pt];
        \draw[circle, fill=white] (3) circle [radius=24pt];
        \node at (1) {\scalebox{1}{$1$}};
        \node at (2) {\scalebox{1}{$2$}};
        \node at (3) {\scalebox{1}{$3$}};
    \end{tikzpicture}}}
\]
The hyperedge above the black vertex is no longer present in the right-hand side. This lead to the definition of the following species:

\begin{dfn}
    Let $\Red$ be the species of \emph{reduced} forests of rooted Greg hypertrees which is the subspecies of $\FRGHT$ such that black vertices have no incoming hyperedges.  The species $\Red$ inherits the hypertree weight grading and the Greg grading from $\FRGHT$.
\end{dfn}

\begin{dfn}
    The \emph{height} of a forest of rooted Greg hypertrees is the sum over all hyperedges of their hypertree weight times the number of white vertices in the path from this hyperedge to the root. 
\end{dfn}

\begin{prp}
    The following rewriting system on $\FRGHT$ is convergent:
    \[
    \vcenter{\hbox{\begin{tikzpicture}[scale=0.25]     
        \node (1) at (0, 0) {};
        \node (3) at (6, 0) {};
        \node (B) at (3, -3) {};
        \fill[gray] (B.north)--(1.south)--(3.south)--cycle;
        \draw[thick] (B.north)--(1.south);
        \draw[thick] (B.north)--(3.south);
        \draw[thick] (1.south)--(3.south);
        \node (dot) at (3, 0) {$\cdots$};
        \draw[circle, fill=black] (B) circle [radius=24pt];
        \draw[circle, fill=white] (1) circle [radius=24pt];
        \draw[circle, fill=white] (3) circle [radius=24pt];
        \node at (1) {\scalebox{1}{$1$}};
        \node at (3) {\scalebox{1}{$n$}};
    \end{tikzpicture}}} \; \mapsto \;\vcenter{\hbox{\begin{tikzpicture}[scale=0.25]     
        \node (1) at (0, 0) {};
        \node (3) at (6, -3) {};
        \node (B) at (0, -3) {};
        \draw[thick] (1)--(B);        
        \node (dot) at (3, -3) {$\cdots$};
        \draw[circle, fill=black] (B) circle [radius=24pt];
        \draw[circle, fill=white] (1) circle [radius=24pt];
        \draw[circle, fill=white] (3) circle [radius=24pt];
        \node at (1) {\scalebox{1}{$1$}};
        \node at (3) {\scalebox{1}{$n$}};
    \end{tikzpicture}}} \;+\;\cdots\;+\; \vcenter{\hbox{\begin{tikzpicture}[scale=0.25]     
        \node (1) at (0, -3) {};
        \node (3) at (6, 0) {};
        \node (B) at (6, -3) {};
        \draw[thick] (3)--(B);      
        \node (dot) at (3, -3) {$\cdots$};
        \draw[circle, fill=black] (B) circle [radius=24pt];
        \draw[circle, fill=white] (1) circle [radius=24pt];
        \draw[circle, fill=white] (3) circle [radius=24pt];
        \node at (1) {\scalebox{1}{$1$}};
        \node at (3) {\scalebox{1}{$n$}};
    \end{tikzpicture}}}
\]
For readability, other edges of the black vertex are omitted in the picture, however they are present and stay connected to the black vertex.
\end{prp}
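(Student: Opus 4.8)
The plan is to verify the two conditions that, by the definition of convergence recalled earlier, together yield the claim: termination and local confluence. Throughout I work directly with forests of rooted Greg hypertrees, and I call a black vertex carrying an incoming hyperedge a \emph{redex}. Applying the rule at such a vertex resolves one of its incoming hyperedges, keeping a single one of the subtrees hanging from it attached by a simple edge and pushing the remaining subtrees onto the edge that joins the black vertex to its parent (or, when the black vertex is a root, onto the ambient forest). Recall that the rule produces a sum of terms, so all rewriting is understood on linear combinations, exactly as in the shuffle-operad framework recalled in Section~1.

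\textbf{Termination.} First I would exhibit an $\N$-valued monovariant that strictly decreases at every step. For a forest $F$ set $M(F)=\sum_{h}w(h)\,\beta(h)$, the sum over all hyperedges $h$ of $F$ of the weight $w(h)$ of $h$ times the number $\beta(h)$ of black vertices on the path from the root-side endpoint of $h$ to the root of its tree, that endpoint included; this is the variant of the height defined above in which white vertices are replaced by black ones. When the rule resolves a hyperedge $e$ at a black vertex $B$, the weight $w(e)$ is either destroyed (if $B$ is a root) or transferred to the parent edge of $B$, whose root-side endpoint is the parent of $B$ and hence sees exactly one fewer black vertex, namely $B$, on its path to the root; moreover every internal hyperedge of a pushed-up subtree also loses $B$ from its path. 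Thus each rewriting step lowers $M$ by at least $w(e)\geq 1$, and since $M\geq 0$ every rewriting sequence is finite.

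\textbf{Local confluence.} Next I would classify the overlaps of two redexes. If the two black vertices are unrelated (neither lies on the path from the other to the root, and they do not share the edge being enlarged), the two expansions touch disjoint pieces of $F$ and commute, so such pairs are immediately joinable; likewise, two redexes that are distinct children sharing one parent edge merely enlarge that edge by disjoint sets of subtrees and commute. The one essential critical pair is the \emph{nested} configuration: a black redex $B$ whose incoming hyperedge is the parent edge of a second black redex $B'$ sitting above it. Resolving $B'$ first enlarges the hyperedge at $B$, which must then still be resolved; resolving $B$ first leaves $B'$ alone but, crucially, pushing subtrees up past the still-black $B'$ can re-create a hyperedge at $B'$, triggering a cascade. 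This nested critical pair, and the cascade it produces, is where I expect the main difficulty: the two reduction orders need not have equal length, so a naive term-by-term matching is awkward.

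The clean way around the obstacle is to by-pass the order of reduction altogether. I would write a manifestly order-independent closed form for the normal form of an arbitrary forest $F$: the sum, over all functions that choose for the incoming hyperedge of each black vertex one child to keep, of the reduced forest obtained by simultaneously keeping each chosen child (by a simple edge) and relocating every non-chosen child to its nearest white ancestor along the path to the root, where subtrees arriving at the same white vertex merge into a single hyperedge (a non-chosen child with no white ancestor being sent to the forest level). One checks on the base case $g\circ_1 c$ that this sum is exactly $c\circ_2 g+(c\circ_1 g).(2\;3)$, and that the number of incoming edges of each black vertex, hence validity in $\FRGHT$, is preserved. It then suffices to verify that a single rewriting step carries the closed-form expansion of its source to that of its target, which reduces local confluence to one redistribution identity for the subtrees of a single hyperedge, namely the three-term relation $g\circ_1 c=c\circ_2 g+(c\circ_1 g).(2\;3)$ read inside a forest. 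Together with termination this makes the system convergent, and identifies the reduced forests of $\Red$ with a basis of the normal forms, as needed for the description of $\RCG$.
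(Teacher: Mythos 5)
Your termination argument is sound, and it is worth noting that it does not merely paraphrase the paper: the paper's own measure (the height, which counts \emph{white} vertices on the path to the root) is in fact only conserved, not decreased, when the resolved black vertex is not a root, since the weight of the resolved hyperedge is transferred to the parent edge and the set of white vertices seen from there is unchanged; your variant counting \emph{black} vertices genuinely drops by at least $w(e)\geq 1$ at every step, for exactly the reason you give. The gap is in local confluence: the order-independent closed form you propose is wrong, so the verification that ``a single rewriting step carries the closed-form expansion of its source to that of its target'' fails. The problem is precisely the cascade you flagged and then tried to bypass. When the non-chosen children of a hyperedge at a black vertex $B$ are pushed onto the parent edge of $B$ and the parent $p$ is itself black, the new hyperedge at $p$ has both the pushed subtrees \emph{and} $B$ among its upper vertices, and resolving it may keep a pushed subtree as a simple-edge child of $p$ while relocating $B$ itself further down. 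Your closed form never produces such terms: it only moves non-chosen children of the original hyperedges to white ancestors and never displaces a black vertex.

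Concretely, take the tree with white root $w$, black child $p$, where $p$ has two children, a black vertex $B$ and a white vertex $u$, and where $B$ carries the hyperedge $\{B,a,b\}$ together with a simple child $c$. This tree has a single redex, at $B$. Resolving it (say keeping $a$) produces a term containing the hyperedge $\{p,B,b\}$, and resolving that redex at $p$ yields \emph{two} terms: one in which $b$ is relocated into a hyperedge at $w$ and $B$ remains a simple child of $p$, and one in which $B$ is relocated into the hyperedge at $w$ while $b$ becomes a simple child of $p$. The full normal form of the original tree therefore has four terms, whereas your closed form produces only the two in which $B$ stays attached to $p$; equivalently, already the first rewriting step does not preserve your closed-form expansion, so it cannot serve as the confluence invariant. (Both kinds of terms lie in $\Red$, so nothing rules them out a posteriori.) What is actually needed here is the direct critical-pair computation that the paper invokes: check that applying two rules consecutively, in either order, to the nested configuration produces the same linear combination after further reduction; combined with your termination measure and Newman's lemma this gives convergence. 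Any corrected closed-form description of the normal form must be recursive in the black depth, allowing at each black level the choice between a pushed-up subtree and the black vertex below it, rather than a single global choice per original hyperedge.
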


\begin{proof}
    First, let us remark that those rewriting rules strictly decrease the height of the forest of rooted Greg hypertrees. Hence, it terminates.
    Let us apply consecutively two rewriting rules. A simple computation shows that the result does not depend on the order of the rewriting rules.
\end{proof}

\begin{crl}
    The species underlying the operad $\RCG$ is $\Red$.
\end{crl}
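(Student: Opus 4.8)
The plan is to realize $\RCG$ as the quotient of the operad $\FRGHT$ (which we have already identified with $\CG$) by the operadic ideal $J$ generated by the single quadratic element $r = g\circ_1 c - c\circ_2 g - (c\circ_1 g).(2\;3)$, and then to read off a basis of this quotient from the convergent rewriting system of the preceding proposition. Concretely, I would first observe that a forest of rooted Greg hypertrees lies in $\Red$ exactly when no black vertex carries an incoming hyperedge, which is precisely the condition that no left-hand side of a rewriting rule occurs in it; thus $\Red(L)$ is the set of \emph{normal forms} on $L$. The goal then becomes the standard assertion that, for a convergent rewriting system, the normal forms descend to a basis of the quotient by the ideal generated by the rules.

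The key step is to check that the ideal generated by the whole family of rewriting rules coincides with $J$. One inclusion is immediate: the arity-$2$ instance of the rule is exactly $r$, so $J$ is contained in the ideal generated by the rules. For the reverse inclusion I would argue that every rule is an operadic consequence of $r$. Using the factorization $c_n = (\cdots(c_2\circ_1 c_2)\circ_1\cdots)\circ_1 c_2$ established earlier, the arity-$n$ rule (breaking up an incoming hyperedge of a black vertex into simple edges) is obtained by applying $r$ repeatedly at the successive copies of $c_2$; an induction on $n$ shows the resulting reduction agrees term-by-term with the pictured rule, so each rule lies in $J$. Hence the two ideals agree and $\RCG = \FRGHT/J$ is the quotient by the ideal generated by the rules.

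With this identification in hand, I would invoke the general principle (the same one underlying Proposition~\ref{prp:QGB}, see \cite{GB}) that a convergent rewriting system yields a normal-form map $N$ which is constant on each class modulo the ideal generated by the rules and whose image is spanned by the normal forms. Since that ideal is $J$, the map $N$ factors through $\RCG$ and induces an equivariant surjection onto the span of the reduced forests whose composite with the evident inclusion $\Red \hookrightarrow \FRGHT \twoheadrightarrow \RCG$ is the identity. This simultaneously gives spanning (every forest rewrites to a combination of reduced forests, termination being guaranteed by the strict decrease of the height) and linear independence of the reduced forests in $\RCG$. Therefore the underlying species of $\RCG$ is $\Red$.

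I expect the main obstacle to be the reverse inclusion in the ideal comparison, namely confirming that the higher-arity rewriting rules are genuine consequences of the quadratic relation $r$ rather than additional relations. This is where care is needed: one must track the permutations and signs produced when $r$ is inserted at each $c_2$ in the comb decomposition of $c_n$, and verify by induction that the telescoping sum reproduces exactly the symmetric right-hand side of the pictured rule, with no leftover term still carrying an incoming hyperedge on the black vertex.
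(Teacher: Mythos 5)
Your proposal is correct and takes essentially the same route as the paper: the paper's proof likewise uses the convergent rewriting system of the preceding proposition to project $\FRGHT$ onto the normal forms $\Red$, asserts that all the pictured rules are operadic consequences of the single relation $g\circ_1c - c\circ_2g - (c\circ_1g).(2\;3)$, and concludes that $\Red$ underlies the quotient $\RCG$. The only difference is one of detail: you sketch the induction (via the comb decomposition of $c_n$) for the ideal comparison, a step the paper states without proof.
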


\begin{proof}
    The rewriting system of the previous proposition gives us a projection of $\FRGHT$ on $\Red$, by applying the rewriting system in any order. Moreover, all those rewriting rules are consequences of the rule $g\circ_1c \mapsto c\circ_2g + (c\circ_1g).(2\;3)$. Hence, $\Red$ is the operad $\FRGHT$ quotiented by the relation $g\circ_1c \mapsto c\circ_2g + (c\circ_1g).(2\;3)$, which is the definition of $\RCG$.
\end{proof}

Let $u$ be a formal variable encoding the hypertree weight grading and $v$ be a formal variable encoding the Greg weight grading. Let us denote $f_\Red(t,u,v)$ the exponential generating series of $\Red$ according to these grading. It means that $f_\Red(t,u,v)=\sum a_{i,j,k}\frac{t^iu^jv^k}{i!}$ where $a_{i,j,k}$ is the number of forests of reduced rooted Greg hypertrees with $i$ white vertices of hypertree weight $j$, and Greg weight $k$.

\begin{prp}\label{prp:EulerRed}
    The exponential generating series of $\Red$ is given by:
    $$ f_\Red(t,u,v)=\rev\left(\left((v+1)\frac{\ln(1+ut)}{u}+v-v\exp\left(\frac{\ln(1+ut)}{u}\right) \right) \exp(-t) \right) $$
    In particular, $f_\Red(t,1,-1)$ is the series $\sum_{n\geq 1}n^{n-1}\frac{t^n}{n!}$.
\end{prp}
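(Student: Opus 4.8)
The plan is to run the same computation that produced $f_\FRGHT$, with the reducedness condition altering exactly one term of the decomposition. First I would introduce the auxiliary species $\mathcal{GH}^{\mathrm{red}}$ of \emph{reduced rooted Greg hypertrees} (a single tree, not a forest). Exactly as for $\FRHT$ and $\FRGHT$, a reduced forest is a nonempty set of such trees with the forest weight absorbed into the $u$--grading, so $\Red=\frac{1}{u}E_{\geq 1}(u\cdot\mathcal{GH}^{\mathrm{red}})$.

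Next I would decompose $\mathcal{GH}^{\mathrm{red}}$ according to the colour of its root. A white root carries its label together with an arbitrary set of edges, each edge --- simple or hyper --- supporting a reduced forest, and hence contributes $X\cdot E(\Red)$. A black root has at least two incoming edges, and here the reducedness condition is precisely that none of them is a hyperedge; thus each incoming edge is simple and supports a \emph{single} reduced Greg hypertree rather than a forest, contributing $v\cdot E_{\geq 2}(\mathcal{GH}^{\mathrm{red}})$. This gives $\mathcal{GH}^{\mathrm{red}}=X\cdot E(\Red)+v\,E_{\geq 2}(\mathcal{GH}^{\mathrm{red}})$. The only difference from the decomposition of $\mathcal{GH}$ underlying $f_\FRGHT$ is that the black-root term is now built from single trees instead of forests; pinning this term down correctly is the crux of the argument.

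Passing to generating series, write $F=f_\Red$ and $H=f_{\mathcal{GH}^{\mathrm{red}}}$. The forest equation gives $\exp(uH)=1+uF$, so $H=\frac{\ln(1+uF)}{u}$ and $\exp(H)=(1+uF)^{1/u}$, while the root decomposition gives $H=t\exp(F)+v(\exp(H)-1-H)$. Solving the latter for $t\exp(F)$ and substituting $H$ yields $t=\bigl((v+1)\frac{\ln(1+uF)}{u}+v-v(1+uF)^{1/u}\bigr)\exp(-F)$, which is exactly the series appearing under $\rev$ in the statement, evaluated at $F$; hence $F=\rev(\cdots)$, as claimed.

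Finally I would specialize to $u=1$, $v=-1$: then $v+1=0$ kills the logarithmic term and $-v\exp(\ln(1+s))=1+s$, so the series under $\rev$ collapses to $s\exp(-s)$. Its compositional inverse is the tree function $T(t)=\sum_{n\geq1}n^{n-1}\frac{t^n}{n!}$, characterized by $T=t\exp(T)$ (equivalently $t=T\exp(-T)$), giving $f_\Red(t,1,-1)=\sum_{n\geq1}n^{n-1}\frac{t^n}{n!}$. I expect the species bookkeeping --- in particular justifying that a black root in $\Red$ contributes $E_{\geq 2}(\mathcal{GH}^{\mathrm{red}})$ rather than $E_{\geq 2}(\Red)$ --- to be the only delicate point; the rest is routine manipulation of exponential generating series.
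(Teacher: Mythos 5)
Your proposal is correct and follows essentially the same route as the paper: the paper likewise introduces the species $\RG$ of single reduced rooted Greg hypertrees (your $\mathcal{GH}^{\mathrm{red}}$), writes $\Red=\frac{1}{u}E_{\geq 1}(u.\RG)$ and $\RG=X.E(\Red)+vE_{\geq 2}(\RG)$ --- the same root-colour decomposition, with the black root carrying only simple edges, hence single trees rather than forests --- and then solves the resulting pair of functional equations for $t$ exactly as you do. Your explicit check of the specialization $u=1$, $v=-1$ via the tree function $T=t\exp(T)$ fills in a step the paper leaves implicit.
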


\begin{proof}
    Let us inspect the species $\Red$ and $\RG$ of reduced rooted Greg hypertrees. We have:
    $$ \Red=\frac{1}{u}E_{\geq 1}(u.\RG) $$
    and:
    $$ \RG=X.E(\Red) + vE_{\geq 2}(\RG) $$
    Hence, we have:
    $$ f_{\RG}=\frac{\ln(1+u.tf_\Red)}{u}$$
    and:
    $$ \frac{\ln(1+u.tf_\Red)}{u}=t\exp(f_\Red)+v\exp\left(\frac{\ln(1+u.tf_\Red)}{u}\right)-v\frac{\ln(1+u.tf_\Red)}{u}-v $$
    We get that:
    $$ f_\Red(t,u,v)=\rev\left(\left((v+1)\frac{\ln(1+ut)}{u}+v-v\exp\left(\frac{\ln(1+ut)}{u}\right) \right) \exp(-t) \right) $$
\end{proof}

\begin{rmk}
    The computation of the composition reverse of this exponential generating series show that $\RCG$ is not Koszul.
\end{rmk}

We can now define the analogue of $\greg_{-1}$ for the operad $\CPL$. We will compute its cohomology in the next section to show the main theorem.

\begin{dfn}
    Let $\dg\CG$ be the differential graded operad such that the underlying operad is $\RCG$ with $x$, $y$ and $c$ in degree $0$ and $g$ in degree $1$, and the differential is given by $d(x)=g$ and $d(c)=0$. The underlying species of this operad is $\Red$, hence $d$ is also defined on $\Red$. Let $F^p\Red$ be the subspecies of $\Red$ of forests of reduced rooted Greg hypertrees of height less or equal to $p$. The differential $d$ respect the filtration by the height.
\end{dfn}

\section{The embedding of $\FMan$ in $\CPL$}

The operad $\FMan$ is the operad encoding the algebraic structure on the vector fields of a Frobenius manifold. It is conjectured in \cite{conjFMan} that $\FMan$ is isomorphic to the suboperad of $\CPL$ generated by $x-y$ and $c$. In this section, we will prove this conjecture. First, let us state presentation of the operad $\FMan$ by generators and relations from \cite{HM}. The operad $\FMan$ admit the following presentation:
\begin{multline*}
    \Tt\{l,c\}/\langle l\circ_1 l - l\circ_2 l - (l\circ_1 l).(2\;3),\; c\circ_1 c - c\circ_2 c,\\ (l\circ_1c)\circ_3c - (c\circ_1l)\circ_1c - ((c\circ_1l)\circ_1c).(3\;4) - (c\circ_2l)\circ_3c - ((c\circ_2l)\circ_3c).(1\;2) +\\ ((c\circ_1c)\circ_3l).(2\;3) + ((c\circ_1c)\circ_3l).(1\;3) - ((c\circ_1c)\circ_3l).(1\;4) - ((c\circ_1c)\circ_3l).(2\;4)\rangle, 
\end{multline*}
where the action of $\mathfrak{S}_2$ on $\K\langle l,c\rangle$ is given by $l.(1\;2)=-l$ and $c.(1\;2)=c$. The relations defining $\FMan$ are the Jacobi relation of the Lie bracket $l$, the associativity relation of the commutative product $c$ and the so-called Hertling-Manin relation which is cubical. The Hertling-Manin relation can be understood the following way: Let $LR=l\circ_2c-c\circ_1l - (c\circ_1l).(1\;2)$ be the failure to satisfy the Leibniz rule. Then $LR$ satisfy the Leibniz rule in its first input, meaning that:
$$LR\circ_1c - c\circ_2LR - (c\circ_1LR).(2\;3)=0$$
Since this relation is cubical, $\FMan$ is not quadratic, hence escapes the scope of the Koszul duality theory. However, this operad is closely related to the operad $\PL$, indeed from \cite{conjFMan}, we know that $\FMan$ is the graded operad associated to the filtration of $\PL$ by the embedding of $\Lie$ into $\PL$. In particullar, the arity-wise dimensions of $\FMan$ are the same as the arity-wise dimensions of $\PL$, which are given by the sequence $n^{n-1}$.

In order to prove that we have an embedding of $\FMan$ into $\CPL$, we will compute the cohomology of $\dg\CG$ and show that it is $\FMan$. In order to do so, we will show that the cohomology of $\dg\CG$ is concentrated in degree $0$, then since we know the arity-wise Euler characteristic of $\dg\CG$, we know the arity-wise dimension of the cohomology of $\dg\CG$. Moreover, since those are the same as the arity-wise dimension of $\FMan$, we will have an isomorphism between $\FMan$ and the cohomology of $\dg\CG$, thus showing the embedding of $\FMan$ into $\CPL$.

Let us give a description of the differential $d$ on $\dg\CG$ similar to the description of the differential of $\Tw\PL$ given in Proposition~\ref{prp:descrdGreg}. To do so, let us denote:
\[
x_n=\vcenter{\hbox{\begin{tikzpicture}[scale=0.25]     
    \node (R) at (3,-3) {};
    \node (1) at (0, 0) {};
    \node (dots) at (3, 0) {};
    \node (2) at (6, 0) {};
    \draw[thick] (R)--(1);
    \draw[thick] (R)--(2);
    \node at (dots) {$\cdots$};
    \draw[circle, fill=white] (R) circle [radius=24pt];
    \draw[circle, fill=white] (1) circle [radius=24pt];
    \draw[circle, fill=white] (2) circle [radius=24pt];
    \node at (R) {\scalebox{1}{$1$}};
    \node at (1) {\scalebox{1}{$2$}};
    \node at (2) {\scalebox{1}{$n$}};
\end{tikzpicture}}} 
\quad ; \quad
g_n=\vcenter{\hbox{\begin{tikzpicture}[scale=0.25]     
    \node (R) at (3,-3) {};
    \node (1) at (0, 0) {};
    \node (dots) at (3, 0) {};
    \node (2) at (6, 0) {};
    \draw[thick] (R)--(1);
    \draw[thick] (R)--(2);
    \node at (dots) {$\cdots$};
    \draw[circle, fill=black] (R) circle [radius=24pt];
    \draw[circle, fill=white] (1) circle [radius=24pt];
    \draw[circle, fill=white] (2) circle [radius=24pt];
    \node at (1) {\scalebox{1}{$1$}};
    \node at (2) {\scalebox{1}{$n$}};
\end{tikzpicture}}}
\quad ; \quad
c_n=\vcenter{\hbox{\begin{tikzpicture}[scale=0.25]     
    \node (1) at (0, 0) {};
    \node (dots) at (3, 0) {};
    \node (2) at (6, 0) {};
    \node at (dots) {$\cdots$};
    \draw[circle, fill=white] (1) circle [radius=24pt];
    \draw[circle, fill=white] (2) circle [radius=24pt];
    \node at (1) {\scalebox{1}{$1$}};
    \node at (2) {\scalebox{1}{$n$}};
\end{tikzpicture}}}
\quad ; \quad
h_n=\vcenter{\hbox{\begin{tikzpicture}[scale=0.25]     
    \node (R) at (3,-3) {};
    \node (1) at (0, 0) {};
    \node (dots) at (3, 0) {};
    \node (2) at (6, 0) {};
    \fill[gray] (R.north)--(1.south)--(2.south)--cycle;
    \draw[thick] (R.north)--(1.south);
    \draw[thick] (R.north)--(2.south);
    \draw[thick] (1.south)--(2.south);
    \node at (dots) {$\cdots$};
    \draw[circle, fill=white] (R) circle [radius=24pt];
    \draw[circle, fill=white] (1) circle [radius=24pt];
    \draw[circle, fill=white] (2) circle [radius=24pt];
    \node at (R) {\scalebox{1}{$1$}};
    \node at (1) {\scalebox{1}{$2$}};
    \node at (2) {\scalebox{1}{$n$}};
\end{tikzpicture}}}
\]
Moreover, for $P=\{\{\lambda_{1,1},\dots,\lambda_{1,n_1}\},\dots,\{\lambda_{k,1},\dots,\lambda_{k,n_k}\}\}$ a partition of $\{2,\dots,n\}$, let us denote:
\[
    p_P=\vcenter{\hbox{\begin{tikzpicture}[scale=0.25]     
        \node (R) at (3,-5) {};
        \node (1) at (-6, 0) {};
        \node (dots) at (-3, 0) {};
        \node (2) at (0, 0) {};
        \node (dots2) at (3, 0) {};
        \node (3) at (6, 0) {};
        \node (dots3) at (9, 0) {};
        \node (4) at (12, 0) {};
        \fill[gray] (R.north)--(1.south)--(2.south)--cycle;
        \draw[thick] (R.north)--(1.south);
        \draw[thick] (R.north)--(2.south);
        \draw[thick] (1.south)--(2.south);
        \fill[gray] (R.north)--(3.south)--(4.south)--cycle;
        \draw[thick] (R.north)--(3.south);
        \draw[thick] (R.north)--(4.south);
        \draw[thick] (3.south)--(4.south);
        \node at (dots) {$\cdots$};
        \node at (dots2) {$\cdots$};
        \node at (dots3) {$\cdots$};
        \draw[circle, fill=white] (R) circle [radius=32pt];
        \draw[circle, fill=white] (1) circle [radius=40pt];
        \draw[circle, fill=white] (2) circle [radius=40pt];
        \draw[circle, fill=white] (3) circle [radius=40pt];
        \draw[circle, fill=white] (4) circle [radius=40pt];
        \node at (R) {\scalebox{1}{$1$}};
        \node at (1) {\scalebox{0.8}{$\lambda_{1,1}$}};
        \node at (2) {\scalebox{0.8}{$\lambda_{1,n_1}$}};
        \node at (3) {\scalebox{0.8}{$\lambda_{k,1}$}};
        \node at (4) {\scalebox{0.8}{$\lambda_{k,n_k}$}};
    \end{tikzpicture}}}
\]
Then any reduced rooted Greg hypertree which is a corolla is $g_n$ or $p_P$ for some $n$ or $P$ up to a permutation of the labels.

\begin{prp}
    Let $T$ be a forest of reduced rooted Greg hypertrees, $i$ the label of a leaf and $C$ a corolla, then $d_0(T\circ_i C)=d_0(T)\circ_i C + (-1)^{|T|} T\circ_i d_0(C)$.
\end{prp}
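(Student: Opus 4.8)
The plan is to derive the identity for $d_0$ by extracting the top-height component of the corresponding identity for the full differential $d$. Since $\dg\CG$ is a differential graded operad, $d$ is automatically a derivation for the partial compositions: for every $T$, every input $i$ and every $C$ one has
\[ d(T\circ_i C)=d(T)\circ_i C+(-1)^{|T|}\,T\circ_i d(C). \]
This holds with no hypothesis on $i$ or $C$. As $d$ preserves the height filtration, write $d=d_0+d'$ with $d_0$ height-preserving and $d'$ strictly height-lowering; the content of the proposition is that this identity survives the replacement of $d$ by $d_0$ once $i$ is a leaf and $C$ is a corolla.

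First I would analyse how a leaf insertion interacts with the height. Because $i$ is a leaf, $v_i$ has no children, so $T\circ_i C$ carries no sum over graftings: it is the single forest obtained by substituting $C$ in the slot $i$, with the root of $C$ attached to the former parent of $v_i$ by the edge that carried $v_i$. Grafting at a leaf leaves untouched every root-path of a hyperedge already present in $T$ (a leaf lies strictly above all of them), so the only new contribution to the height comes from the hyperedges internal to $C$, each sitting at depth $\delta_i(T)$, the number of white vertices on the path from $i$ to the root of $T$. I would record the resulting formula
\[ \mathrm{height}(T\circ_i C)=\mathrm{height}(T)+w(C)\,\delta_i(T), \]
where $w(C)$ is the hypertree weight of $C$ (and $\mathrm{height}(C)=w(C)$ for a corolla). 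The one delicate case is $C=g_n$ grafted where $v_i$ was carried by a hyperedge: the naive substitution is then non-reduced, and the height-lowering reduction rule of the convergent rewriting system of Section~\ref{sec:red} applies, so it contributes only strictly below the top height and the displayed formula still governs the leading term.

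Next I would take the top-height part of the Leibniz identity. The key point is that, for a leaf $i$, the operation $-\circ_i C$ is strictly compatible with the height filtration: since $d$ creates no white vertices, every monomial $\tau$ of $d(T)$ satisfies $\delta_i(\tau)\le\delta_i(T)$, and the height-preserving monomials have $\delta_i(\tau)=\delta_i(T)$ (the black vertex produced by $d_0$ is inserted without deleting a white vertex from the path joining $i$ to the root). Hence $\mathrm{height}(\tau\circ_i C)=\mathrm{height}(\tau)+w(C)\,\delta_i(\tau)$ reaches the value $H:=\mathrm{height}(T\circ_i C)$ exactly on the terms of $d_0(T)$, so $d_0(T)\circ_i C$ is height-homogeneous of height $H$ and coincides with the height-$H$ part of $d(T)\circ_i C$. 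The identical argument carried out inside $C$ identifies the height-$H$ part of $T\circ_i d(C)$ with $T\circ_i d_0(C)$. Extracting the height-$H$ component of the Leibniz identity for $d$, the sign $(-1)^{|T|}$ being inherited unchanged, yields exactly the claimed formula.

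The hard part will be the height bookkeeping of the third paragraph: proving that grafting a corolla at a leaf is strictly compatible with the height filtration, that is, that $d_0$ preserves the depth $\delta_i$ of the leaf and that the reduction terms arising in the $C=g_n$ case are genuinely height-lowering. Once these two points are settled the proposition is a formal consequence of the derivation property of $d$; the restriction to a leaf $i$ and a corolla $C$ is precisely what turns $T\circ_i C$ into a single forest and keeps the top-height extraction clean, and the sign is routine.
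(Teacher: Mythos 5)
Your proposal follows essentially the same route as the paper's proof: decompose $d$ into $d_0$ plus strictly height-lowering terms, invoke the Leibniz rule for $d$ (which holds since $\dg\CG$ is a dg operad), and argue that inserting a single corolla at a leaf interacts cleanly with the height filtration so that the top-height part of the Leibniz identity is exactly the claimed formula; the paper compresses your height bookkeeping into the observation that no rewriting is involved in a leaf-corolla composition, whence $d_0(T)\circ_i C$ and $T\circ_i d_0(C)$ have the same height. One correction: your ``delicate case'' is vacuous, because reducedness only forbids hyperedges \emph{incoming} to a black vertex (connecting it to its children), so grafting $g_n$ at a leaf carried by a hyperedge makes that hyperedge the \emph{outgoing} (parent) edge of the black root, which is still reduced -- no rewriting ever occurs, which only makes your leading-term extraction hold on the nose.
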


\begin{proof}
    Let us denote $lwt$ for ``lower weight terms'', meaning forests of reduced rooted Greg hypertrees of lower height. We have:
    \begin{align*}
        d_0(T\circ_i C) &= d(T\circ_i C) + lwt\\
        &= d(T)\circ_i C + (-1)^{|T|} T\circ_i d(C) + lwt\\
        &= d_0(T)\circ_i C + (-1)^{|T|} T\circ_i d_0(C) + lwt
    \end{align*}
    Moreover, since we compose a single reduced rooted Greg hypertree in a leaf of $T$, no rewriting are involved in the composition. Hence,  $d_0(T)\circ_i C$ and $T\circ_i d_0(C)$ have the same height. Hence:
    $$ d_0(T\circ_i C)=d_0(T)\circ_i C + (-1)^{|T|} T\circ_i d_0(C) $$
\end{proof}

\begin{lem}
    The differential $d_0$ on $Gr^p\Red$ admits a description similar to Proposition~\ref{prp:descrdGreg}. The image of a forest of reduced rooted Greg hypertrees $T$ is obtained as the sum of six terms:
    \begin{enumerate}
        \item The sum of all possible ways to split a white vertex of $T$ into a white vertex retaining the label and a black vertex above it and to connect the incoming edges to one of the two vertices (hyperedges cannot be grafted on the black vertex), taken with a minus sign.
        \item The sum of all possible ways to split a white vertex of $T$ into a white vertex retaining the label and a black vertex bellow it and to connect the incoming edges to one of the two vertices (hyperedges cannot be grafted on the black vertex), taken with a minus sign.
        \item The sum of all possible ways to split a black vertex of $T$ into two black vertices and to connect the incoming edges to one of the two vertices, taken with a plus sign.
        \item The sum over all the white vertex directly above a hyperedge to graft this white vertex on top of a new black vertex, and to put this new black vertex in the hyperedge in place of the white vertex, taken with a minus sign.
        \item The sum of all possible ways to graft an additional black leaf to $T$, taken with a plus sign.
        \item The sum of all possible ways graft a tree of $T$ on top of a new black root, taken with a minus sign.
    \end{enumerate}
    In this description, we forbid the grafting of rooted hyperedges on black vertices to ensure that the result is a forest of reduced rooted Greg hypertrees. Some black vertex that are created have zero or one child, however, those terms cancel out in the differential, and we are left with a sum of forests of reduced rooted Greg hypertrees.
\end{lem}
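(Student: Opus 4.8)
The plan is to leverage the previous proposition, which establishes that $d_0$ is a derivation with respect to the operadic insertion of corollas at leaves, so that it suffices to compute $d_0$ on the corollas $g_n$, $c_n$ (and $h_n$, $p_P$) and then transport the answer along insertions. The starting point is the differential $d_\Tw$ on $\Tw\PL$ described in Proposition~\ref{prp:descrdGreg}, together with Remark~\ref{rmk:sign} on sign conventions. Since $\dg\CG$ is built by adjoining the commutative product $c$ to the Greg structure and then passing to the reduced quotient $\RCG$, the description of $d_0$ should be obtained by taking the five-term formula of Proposition~\ref{prp:descrdGreg}, adapting each term to the hypertree setting, and tracking which terms survive in $Gr^p\Red$ (the associated graded for the height filtration), where lower-height contributions are killed.

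First I would fix notation: on $Gr^p\Red$ the differential $d_0$ is the height-preserving part of $d$, so any contribution produced by a rewriting step (which strictly decreases height, as shown in the convergence proposition) is discarded. With this understood, I would verify the five operations of Proposition~\ref{prp:descrdGreg} one at a time on a single white vertex, a single black vertex, and the root, exactly as in the rooted-tree case, checking that splitting a white vertex into a white-over-black or white-under-black pair, splitting a black vertex into two black vertices, grafting an additional black leaf, and grafting onto a new black root all carry over verbatim with the same signs. The genuinely new phenomenon is the presence of hyperedges, and the constraint, built into the definition of $\Red$, that black vertices carry no incoming hyperedge; this forces the modification that hyperedges may never be attached to a newly created black vertex, which I would impose in terms $1$, $2$, and $3$.

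The new term $4$ is the main obstacle and the heart of the verification. It arises because $d(x)=g$ together with the reduction relation $g\circ_1 c = c\circ_2 g + (c\circ_1 g).(2\;3)$ forces a nonzero height-preserving contribution precisely when a white vertex sits directly above a hyperedge: differentiating the edge immediately beneath that white vertex would, before reduction, try to insert a black vertex into the hyperedge, and the reduction relation replaces this by grafting the white vertex onto a new black vertex that then takes the white vertex's place inside the hyperedge. I would pin this down by computing $d$ on $h_n$ and on the smallest $p_P$, comparing the naive output of Proposition~\ref{prp:descrdGreg} with the reduced normal form, and isolating the unique surviving height-preserving summand; this is exactly term~$4$, taken with a minus sign to match the sign of $d(x)=g$ and the splitting conventions of terms~$1$ and~$2$.

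Finally I would assemble the six terms and invoke the derivation property to conclude that the formula, verified on corollas, extends to all of $Gr^p\Red$, since every forest of reduced rooted Greg hypertrees is an iterated insertion of corollas at leaves and $d_0$ is a derivation for such insertions with no rewriting involved. The last point to check is the stated cancellation: black vertices created with zero or one child must drop out. Here I would reuse the cancellation mechanism already invoked in Proposition~\ref{prp:descrdGreg}, whereby contributions from terms~$5$ and~$6$ (extra black leaf, new black root) are annihilated by matching contributions from the splitting terms~$1$, $2$, and~$3$ whenever the vertex in question has other neighbors, leaving a well-defined sum of genuine forests of reduced rooted Greg hypertrees. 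The sign bookkeeping, governed by Remark~\ref{rmk:sign} and the Koszul rule, is routine once the convention that the newly created black vertex is filled first is fixed, so I do not expect it to present a conceptual difficulty beyond careful accounting.
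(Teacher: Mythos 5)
Your overall strategy is the same as the paper's (compute $d_0$ on corollas, then extend along insertions of corollas at leaves), but the extension step, which you dispatch in one sentence, is logically incomplete, and it is in fact half of the paper's proof. Write $d_\descr$ for the six-term recipe. The proposition preceding the lemma gives the derivation identity $d_0(T\circ_i C)=d_0(T)\circ_i C+(-1)^{|T|}T\circ_i d_0(C)$; together with the equality $d_\descr=d_0$ on corollas this determines $d_0$ on every forest, but it does \emph{not} yet say that the value so determined coincides with $d_\descr$ applied directly to the composite forest. For that you must prove that the combinatorial recipe $d_\descr$ itself satisfies the same derivation identity, and this is not formal: writing $v$ for the white leaf of $T$ receiving the corolla $C$, the type~$(1)$ and~$(5)$ contributions of $v$ to $d_\descr(T)$ cancel, its type~$(2)$ contribution survives, and after insertion it reproduces exactly the type~$(6)$ term of $d_\descr(C)$ (grafting the root tree of $C$ on a new black root), a term which is absent from $d_\descr(T\circ_i C)$ because the root of $C$ is no longer a root of the composite; the signs are fixed by Remark~\ref{rmk:sign}. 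Without this matching of the $v$-contributions against the lost $(6)$-contribution, your induction does not close.

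Your mechanism for term~$(4)$ is also wrong. You propose to obtain it by applying the reduction relation $g\circ_1c=c\circ_2g+(c\circ_1g).(2\;3)$ to the naive differential and ``isolating the unique surviving height-preserving summand''. But every application of that rewriting strictly decreases the height, so reduction artifacts land in lower filtration and die in $Gr^p\Red$; they can never produce a height-preserving term. A test case makes this concrete: $d(h_n)=g_2\circ_2 c_{n-1}$ consists, after reduction, only of terms of strictly lower height, so $d_0(h_n)=0$, and correspondingly \emph{all} terms of $d_\descr(h_n)$ cancel in pairs --- there is no surviving summand to isolate. Term~$(4)$ is not a reduction artifact: it creates a one-child black vertex and is present precisely to compensate the type-$(2)$ splittings at white vertices sitting directly above a hyperedge, just as $(1)$ compensates $(5)$. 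In the paper, the hyperedge constraints in $(1)$ and $(2)$ and these compensations are instead extracted from the no-rewriting (height-preservation) analysis of $d_0(p_P)$, computed through the factorization $p_P=(\dots(x_k\circ_{i_1}c_{k_1})\circ_{i_2}\dots)\circ_{i_s}c_{k_s}$ together with $d(c_{k_j})=0$; some such argument is needed in place of your appeal to the rewriting rule.
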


\begin{proof}
    Let us denote $d_\descr$ the map described in the proposition. Let us prove that $d_\descr=d_0$. In order to do so, let us prove that $d_\descr(C)=d_0(C)$ for $C$ a corolla, and then that $d_\descr(T\circ_i C)=d_\descr(T)\circ_i C + (-1)^{|T|} T\circ_i d_\descr(C)$ for $i$ a label of a leaf of $T$ and $C$ a corolla.\\
    Let us first prove that $d_\descr(C)=d_0(C)$ for $C$ a corolla. Let $C$ be a corolla, if $C=x_n$ or $g_n$ (up to a permutation) then $d_\descr(C)=d(C)=d_0(C)$ from Proposition~\ref{prp:descrdGreg}. Else, we have $C=p_P$ for some partition $P$, which allows us to write $C$ as a composition the following way: 
    $$C=(\dots(x_k\circ_{i_1}c_{k_1})\circ_{i_2}\dots)\circ_{i_s}c_{k_s}$$
    Let us compute $d_0(C)$:
    \begin{align*}
        d_0(C)&=d(C)+lwt\\
        &= d((\dots(x_k\circ_{i_1}c_{k_1})\circ_{i_2}\dots)\circ_{i_s}c_{k_s})+lwt\\
        &= ((\dots(d(x_k)\circ_{i_1}c_{k_1})\circ_{i_2}\dots)\circ_{i_s}c_{k_s})+lwt\\
        &= ((\dots(d_\descr(x_k)\circ_{i_1}c_{k_1})\circ_{i_2}\dots)\circ_{i_s}c_{k_s})+lwt
    \end{align*}
    Let $T$ be a reduced rooted Greg hypertree appearing in $d_\descr(x_k)$. To conclude, we need to know of when $((\dots(T\circ_{i_1}c_{k_1})\circ_{i_2}\dots)\circ_{i_s}c_{k_s})$ has the same height as $C$. This is the case if and only no rewritings are involved in the compositions, hence if and only if each $i_j$ is the label of a leaf which is the child of a white vertex. This is exactly the condition that ``hyperedges cannot be grafted on the black vertex'' in the terms from~$(1)$ and~$(2)$. Moreover, since all the new vertices coming from the $c_{k_j}$ are leaves connected by hyperedges, the terms from~$(1)$ compensate with the terms from~$(5)$, and the terms from~$(2)$ compensate with the terms from~$(4)$. Hence, we have that $d_\descr(C)=d_0(C)$.\\
    Let us show that $d_\descr(T\circ_i C)=d_\descr(T)\circ_i C + (-1)^{|T|} T\circ_i d_\descr(C)$ for $i$ a label of a leaf of $T$ and $C$ a corolla. Let us assume that $T$ is not the identity since the result is obvious if $T$ is the identity. The vertex $v$ labeled $i$ is a white leaf which is not the root, hence the contributions of $v$ in the sum come from~$(1)$ and~$(5)$ which compensate, and from~$(2)$ which create a new black vertex bellow it. The only thing that changes for the vertices of $C$, once composed in $T$, is that the root of $C$ will no longer be a root, hence the contribution from~$(6)$ will no longer appear. However, the contribution of $v$ is exactly the missing contribution of $C$ that no longer appears once composed in $T$, hence:
    $$d_\descr(T\circ_i C)=d_\descr(T)\circ_i C + (-1)^{|T|} T\circ_i d_\descr(C)$$
    The sign $(-1)^{|T|}$ comes from the order in which we fill the black vertices, see Remark~\ref{rmk:sign}.\\
    Since any forest of reduced rooted Greg hypertrees can be obtained by inductively composing corollas in leaves, we have that $d_\descr=d_0$.
\end{proof}

Now that we have this description, let us compute the cohomology of $\dg\CG$ using the Kunneth formula and the fact that the cohomology of $\greg_{-1}$ is $\Lie$. To do so, we need a way to ``cut down'' a forest of reduced rooted Greg hypertrees into rooted Greg trees.

\begin{dfn}
    Let $T$ be a reduced rooted Greg hypertree. A \emph{maximal subtree} of $T$ is a rooted tree $M$ such that the vertices of $M$ are a subset of the vertices of $T$, the edges of $M$ are a subset of the edges of $T$, and $M$ is a maximal tree for this property. Since $T$ is rooted, we can see edges of $T$ as being directed, $M$ is rooted in its lower vertex (which is unique). This generalizes naturally for forests of reduced rooted Greg hypertrees.
\end{dfn}

\begin{dfn}
    Let us define the \emph{shape} of a reduced rooted Greg hypertree $T$ as the hypertree obtained from $T$ by replacing each maximal subtree $M_i$ by a corolla with the white vertices of $M_i$ as leaves and a new black vertex as root. The shape of a forest of reduced rooted Greg hypertrees is the forest of the shapes of its elements. Let us denote $\varphi(T)=(S,M_1,\dots,M_k)$ with $S$ the shape of $T$ and $M_1$, \dots, $M_k$ the maximal subtrees of $T$.  
\end{dfn}

\begin{dfn}
    Let $\Part(n)$ be the set of partitions of the set $\underline{n}$, and $P\in\Part(n)$. Let us define $\Ss(P)$ as the span of the shapes of forests of reduced rooted Greg hypertrees with maximal subtrees $M_i$ respectively labeled by $E_i\in P$. Let $\Ss(n)=\bigoplus_{P\in\Part(n)}\Ss(P)$.
\end{dfn}

\begin{rmk}
    From this definition, we have that $\Ss$ is a species, the species of shapes of forests of rooted hypertrees. Its sequence of dimension can be computed, it is Sequence~\oeis{A367753}. See Sequence~\oeis{A367752} for the species of shapes of rooted hypertrees.
\end{rmk}

\begin{prp}
    We have that:
    $$ \FRHT(n)\simeq\bigoplus_{P\in\Part(n)}\left(\bigotimes_{E\in P}\RT(E)\right)\otimes\Ss(P) $$
    $$ \Red(n)\simeq\bigoplus_{P\in\Part(n)}\left(\bigotimes_{E\in P}\RGT(E)\right)\otimes\Ss(P) $$
    The isomorphism is given by $\varphi$, let us denote it $\varphi$ as well. Moreover, we have the following isomorphism of chain complexes:
    $$ (\Red(n),d_0)\simeq\bigoplus_{P\in\Part(n)}\left(\left(\bigotimes_{E\in P}\RGT(E),d\right)\right)\otimes\Ss(P) $$
\end{prp}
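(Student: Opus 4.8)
The plan is to prove the two species isomorphisms by exhibiting $\varphi$ as a bijection of bases, and then to upgrade the second one to an isomorphism of chain complexes. For the bijection, I would take $T\in\Red(n)$ and delete all of its hyperedges, keeping only the simple edges. Because $T$ is reduced, no black vertex is the parent-end of a hyperedge, so every black vertex retains all of its simple-edge children; hence each connected component of the resulting graph is a genuine rooted Greg tree, and these components are precisely the maximal subtrees $M_1,\dots,M_k$. Their sets of white labels form a partition $P\in\Part(n)$, each $M_i$ lies in $\RGT(E_i)$, and contracting every $M_i$ to its corolla records the surviving hyperedge incidences as the shape $S\in\Ss(P)$; thus $\varphi(T)=(S,M_1,\dots,M_k)$ lands in the summand indexed by $P$. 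The inverse reinserts the $M_i$ into the corolla-slots of $S$, and since relabeling commutes both with deleting hyperedges and with contraction, $\varphi$ is $\Sym_n$-equivariant. This yields the isomorphism for $\Red$, and restricting to Greg weight $0$ (no black vertices, so maximal subtrees are ordinary rooted trees and $\RGT(E)$ becomes $\RT(E)$, while $\Ss(P)$ is unchanged since shapes do not see the internal structure of a subtree) yields the isomorphism for $\FRHT$.

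For the chain-complex statement, I would equip $\bigotimes_{E\in P}(\RGT(E),d)$ with the graded tensor-product differential and give $\Ss(P)$ the zero differential. Using the term-by-term description of $d_0$ from the preceding Lemma, I would check that $\varphi$ is a chain map: terms $(1)$, $(2)$, $(3)$, $(5)$ split a vertex or attach a black leaf along simple edges and therefore modify a single maximal subtree exactly as terms $(1)$--$(4)$ of the Greg differential of Proposition~\ref{prp:descrdGreg}, whereas terms $(4)$ and $(6)$ graft the root of a maximal subtree (sitting below a hyperedge, respectively at a global root) onto a new black root, which is term $(5)$ of the Greg differential applied at that root. In every case the partition $P$, the shape $S$, and all the other factors $M_j$ are left untouched: no term of $d_0$ splits or merges maximal subtrees, and adding a black vertex inside $M_i$ never alters its white leaves, hence never alters the corolla representing it. Consequently, under $\varphi$, the operator $d_0$ becomes $\sum_i \mathrm{id}\otimes\cdots\otimes d\otimes\cdots\otimes\mathrm{id}$ tensored with $\mathrm{id}_{\Ss(P)}$, i.e.\ the tensor-product differential.

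The step I expect to be the main obstacle is the sign bookkeeping in this last identification. The graded tensor product introduces a Koszul sign $(-1)^{|M_1|+\cdots+|M_{i-1}|}$ when the differential acts on the $i$-th factor, and this must be reconciled with the ordering convention of Remark~\ref{rmk:sign}, in which the newly created black vertex is filled first and black vertices are read bottom-to-top and left-to-right. I would fix a linear order on the maximal subtrees compatible with the global filling order and verify that splitting a vertex inside $M_i$ produces exactly that sign. Since this is the very computation already carried out for a single corolla in the Leibniz identity $d_0(T\circ_i C)=d_0(T)\circ_i C+(-1)^{|T|}T\circ_i d_0(C)$, I would then conclude in general by induction on the number of corollas, using that every forest of reduced rooted Greg hypertrees is obtained by composing corollas at leaves.
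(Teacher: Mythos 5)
Your proposal is correct and takes essentially the same route as the paper: the same decomposition of a reduced forest into its shape and maximal subtrees (the paper describes the inverse map, replacing each corolla of a shape $S$ by the corresponding tree $M_i$, while you describe the forward map by deleting hyperedges), followed by the same appeal to the term-by-term description of $d_0$ to see that it acts factor-by-factor as the Greg differential without altering the partition or the shape. If anything you are more careful than the paper, which disposes of the chain-map property in one line by writing $d_0(T)=\sum_{i=1}^k \pm\, \varphi^{-1}(S,M_1,\dots,d_0(M_i),\dots,M_k)$ with an unspecified sign, exactly the Koszul-sign bookkeeping you identify as the main point to verify.
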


\begin{proof}
    Let us build the explicit bijections. Let $S$ be a shape of a forest of rooted hypertrees associated to the partition $P$, and $M_1$, \dots, $M_k$ be rooted trees labeled by $E\in P$. Let us replace each corolla of $S$ by $M_i$ such that the labels of the corolla of $S$ agree with the labels of $M_i$. We get a forest of rooted hypertrees. This is the inverse of the construction we used to define the shape of a forest of rooted hypertrees, hence it is a bijection. Same for the forests of rooted Greg trees.\\
    Let $T$ be a forest of reduced rooted Greg trees, and $\varphi(T)=(S,M_1,\dots,M_k)$. Then from the description of the differential $d_0$ on $\Red$, we have:
    $$d_0(T)=\sum_{i=1}^k \pm \varphi^{-1}(S,M_1,\dots,d_0(M_i),\dots,M_k)$$
    This proves the isomorphism of chain complexes. 
\end{proof}

\begin{rmk}\label{rmk:gact}
    Let $\lambda\vdash n$ and let $\Ss(\lambda)$ the direct sum of $\Ss(P)$ for $P$ a partition of $\underline{n}$ in parts of size $\lambda_i$. We have a right action of the group $\mathfrak{S}_\lambda=\prod\mathfrak{S}_{\lambda_i}$ on $\bigotimes_{i}\RGT(\lambda_i)$, a left action of $\mathfrak{S}_\lambda$, and a right action of $\mathfrak{S}_n$ on $\Ss(\lambda)$. The isomorphism of chain complexes is compatible with those actions, meaning that we have the following isomorphism of $\mathfrak{S}_n$--modules:
    $$ \Red(n)\simeq\bigoplus_{\lambda\vdash n}\left(\bigotimes_{i}\RGT(\lambda_i)\right)\otimes_{\mathfrak{S}_\lambda}\Ss(\lambda) $$
\end{rmk}

We can finally apply the Kunneth formula to show that the cohomology of $\dg\CG$ is concentrated in degree $0$.

\begin{thm}
    The cohomology of the operad $\dg\CG$ is concentrated in degree $0$.
\end{thm}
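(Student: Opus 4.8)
The plan is to deduce the statement directly from the isomorphism of chain complexes established in the preceding proposition, combined with the computation of $H^*(\greg_{-1})$ from Section~\ref{sec:greg} and the Künneth formula. First I would recall that, for each $n$, that proposition provides an isomorphism of chain complexes
$$ (\Red(n),d_0)\simeq\bigoplus_{P\in\Part(n)}\left(\bigotimes_{E\in P}(\RGT(E),d)\right)\otimes\Ss(P), $$
in which the differential on the right acts solely on the tensor factors $(\RGT(E),d)$ coming from the maximal subtrees, the shape factor $\Ss(P)$ carrying no differential. The point to check carefully is that $\Ss(P)$ sits in cohomological degree $0$: the degree in $\dg\CG$ is the Greg weight, that is, the number of black vertices, since $g$ has degree $1$ and $x$, $y$, $c$ have degree $0$. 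In a reduced forest every black vertex has only simple incoming edges, hence lies in one of the maximal subtrees $M_i$, while the black vertices introduced by the shape construction are not counted. Thus the degree of $T$ equals the sum of the degrees of its maximal subtrees, and $\Ss(P)$ is indeed concentrated in degree $0$.

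Next I would apply the Künneth formula. Since we work over a field $\K$, there are no correction terms, so taking cohomology commutes with the tensor products and the direct sum, giving
$$ H^*(\Red(n),d_0)\simeq\bigoplus_{P\in\Part(n)}\left(\bigotimes_{E\in P}H^*(\RGT(E),d)\right)\otimes\Ss(P). $$
Now $(\RGT(E),d)$ is exactly the complex underlying $\greg_{-1}$ restricted to the label set $E$, whose cohomology is $\Lie(E)$, concentrated in degree $0$ by the corollary of Section~\ref{sec:greg}. Hence each tensor factor is concentrated in degree $0$; their tensor product is concentrated in degree $0$, the total degree being the sum of the factor degrees; and tensoring with the degree-$0$ space $\Ss(P)$ and summing over $P\in\Part(n)$ preserves this. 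Therefore $H^*(\Red(n),d_0)$, which is the arity-$n$ part of the cohomology of $\dg\CG$, is concentrated in degree $0$.

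The essential work has already been carried out in the structural decomposition of $(\Red(n),d_0)$; the only genuine obstacle here is ensuring that this decomposition is one of chain complexes with the differential confined to the maximal-subtree factors --- which is precisely what the preceding proposition asserts --- and tracking the gradings so that the shape factor contributes nothing to the cohomological degree. Once these two points are secured, the statement follows formally from the Künneth formula and the known cohomology of $\greg_{-1}$.
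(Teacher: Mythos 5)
Your Künneth step reproduces the first half of the paper's proof, but the last sentence of your argument contains a genuine gap: you identify $H^*(\Red(n),d_0)$ with the arity-$n$ part of the cohomology of $\dg\CG$, and this identification is false. The differential of $\dg\CG$ is $d$, determined by $d(x)=g$, $d(c)=0$ and the operad structure of $\RCG$; the differential $d_0$ appearing in the decomposition proposition is only the induced differential on the associated graded of $\Red$ for the filtration by height. These differ: applying $d$ to a forest can create a black vertex sitting directly below a hyperedge, i.e.\ a configuration of the form $g\circ_1 c$, which is then rewritten in $\RCG$ by the reduction rule $g\circ_1c \mapsto c\circ_2g + (c\circ_1g).(2\;3)$, and this rewriting strictly lowers the height. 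So $d$ equals $d_0$ plus height-lowering terms (this is exactly the decomposition used in the paper's proof that $d_0$ is a derivation with respect to corolla insertion), and $(\Red(n),d_0)$ is not the complex underlying $\dg\CG$ but only its associated graded.

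What your computation actually establishes is that the $E_1$ page of the spectral sequence of the height filtration, namely $H^*(\Red(n),d_0)$, is concentrated in cohomological degree $0$. The missing step --- which the paper supplies in its final sentence --- is to pass from the associated graded back to $(\Red(n),d)$: since $E_1$ is concentrated in cohomological degree $0$ and every higher differential $d_r$, $r\geq 1$, has total cohomological degree $+1$, all higher differentials vanish and the spectral sequence degenerates at the first page; the height filtration is exhaustive and bounded in each arity (the spaces $\Red(n)$ are finite-dimensional), so the spectral sequence converges to $H^*(\Red(n),d)$, whose associated graded, and hence which itself, is concentrated in degree $0$. With this filtration argument added, your proof is complete and coincides with the paper's; your degree bookkeeping for $\Ss(P)$ and the Künneth step over a field are correct.
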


\begin{proof}
    From the previous proposition, we have that:
    $$ (\Red(n),d_0)\simeq\bigoplus_{P\in\Part(n)}\left(\bigotimes_{E\in P}(\RGT(E),d)\right)\otimes \Ss(P) $$
    Hence, we have:
    $$ H^*(\Red(n),d_0)\simeq\bigoplus_{P\in\Part(n)}\left(\bigotimes_{E\in P}H^*(\RGT(E),d)\right)\otimes \Ss(P) $$
    From \cite[Theorem 5.1]{PLTW}, we have that $H^*(\RGT(E),d)$ is concentrated in degree $0$. Hence, $H^*(\Red(n),d_0)$ is concentrated in degree $0$. The spectral sequence associated to the filtration by the height abuts at the first page, hence the cohomology of $(\Red,d)$ is concentrated in degree $0$.
\end{proof}

\begin{rmk}\label{rmk:descrhred}
    From this proof, we can get the following description of the cohomology of $(\Red,d)$:
    $$ H^*(\Red(n),d)\simeq\bigoplus_{P\in\Part(n)}\left(\bigotimes_{E\in P}\Lie(E)\right)\otimes \Ss(P) $$
    This description could allow us to get a recursive formula for the dimension of $H^*(\Red(n),d_0)$ if the dimensions of $\Ss(P)$ were known.
\end{rmk}

\begin{crl}\label{thm:main}
    The morphism $\FMan\to\CPL$ is injective.
\end{crl}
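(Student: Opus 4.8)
The plan is to factor the map $\FMan\to\CPL$ through the cohomology $H^*(\dg\CG)$ and thereby reduce its injectivity to the statement that it induces an isomorphism onto this cohomology. Recall that the morphism in question sends $l\mapsto x-y$ and $c\mapsto c$. Both $x-y$ and $c$ are degree $0$ cocycles of $\dg\CG$: indeed $d(c)=0$ by definition, while $d(x-y)=g-g=0$ because, $d$ being equivariant, $d(y)=d(x).(1\;2)=g.(1\;2)=g$. As cocycles form a suboperad, the image of $\FMan\to\CPL$ consists of $0$-cocycles.

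First I would identify the target of the induced map. Because $\Red$ has no component of negative Greg weight, there are no coboundaries in degree $0$, so $H^0(\dg\CG)=Z^0(\Red,d_0)=\ker\big(d_0\colon\FRHT\to\Red^1\big)$, which is a suboperad of $\Red^0=\FRHT\cong\CPL$; by the preceding theorem the whole cohomology is this $H^0$. Corestricting along the inclusion $H^0(\dg\CG)\hookrightarrow\CPL$ gives a morphism $\Phi\colon\FMan\to H^*(\dg\CG)$ whose composite with $H^*(\dg\CG)\hookrightarrow\CPL$ is exactly the map of the statement. Hence $\FMan\to\CPL$ is injective if and only if $\Phi$ is injective.

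Next I would compare dimensions and prove surjectivity, which together force $\Phi$ to be an isomorphism. Since the cohomology is concentrated in degree $0$, its arity-wise dimension equals the Euler characteristic of $(\Red,d_0)$; as each black vertex contributes one to the (odd) Greg degree, this Euler characteristic is read off from Proposition~\ref{prp:EulerRed} by setting $u=1$ and $v=-1$, giving $f_\Red(t,1,-1)=\sum_{n\geq1}n^{n-1}\tfrac{t^n}{n!}$, so $\dim H^*(\dg\CG)(n)=n^{n-1}=\dim\FMan(n)$. It therefore suffices to show $\Phi$ is surjective. For this I would invoke the description of Remark~\ref{rmk:descrhred}, namely $H^0(\dg\CG)(n)\cong\bigoplus_{P\in\Part(n)}\big(\bigotimes_{E\in P}\Lie(E)\big)\otimes\Ss(P)$: the classes generated by $[x-y]$ fill the factors $\Lie(E)$ (this is exactly the statement that $H^*(\greg_{-1})=\Lie$ is generated by $x-y$), while iterated compositions of $[c]$ produce the shapes $\Ss(P)$. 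Thus $[x-y]$ and $[c]$ generate $H^0(\dg\CG)$, $\Phi$ is onto, and equality of dimensions upgrades this to an isomorphism; composing with the inclusion into $\CPL$ yields the corollary.

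I expect the genuine obstacle to be the surjectivity step, i.e.\ showing that $[x-y]$ and $[c]$ generate all of $H^0(\dg\CG)$ rather than merely a subspace of the right total dimension. Making this precise requires using the decomposition $\FRHT\cong\bigoplus_{P}\big(\bigotimes_{E\in P}\RT(E)\big)\otimes\Ss(P)$ and its Greg analogue compatibly with operadic composition, so that brackets of $x-y$ are seen to build each $\Lie(E)$ block and compositions of $c$ to build the shape $\Ss(P)$; the dimension count alone only pins $\Phi$ inside a space of the correct size. The verification that $x-y$ and $c$ satisfy the Hertling--Manin relation in $\CPL$ is a secondary, purely computational point underlying the existence of the map.
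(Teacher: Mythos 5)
Your proposal is correct and is essentially the paper's own argument: both factor the map through $H^0(\dg\CG)$, get $\dim H^0(\dg\CG)(n)=n^{n-1}=\dim\FMan(n)$ from the concentration theorem together with the Euler characteristic of Proposition~\ref{prp:EulerRed}, and conclude by combining this dimension count with the fact that $x-y$ and $c$ generate $H^0(\dg\CG)$ (the paper asserts this directly from $H^*=H^0$, while you correctly flag it as the step whose proof requires the decomposition of Remark~\ref{rmk:descrhred} compatibly with composition). One notational caution: $H^0(\dg\CG)$ is $\ker\bigl(d\colon\FRHT\to\Red^1\bigr)$ for the full differential $d$, not for the associated-graded differential $d_0$ of the height filtration, whose kernel is strictly larger (for instance $x\circ_2 c$ is a $d_0$-cocycle but not a $d$-cocycle, as $d(x\circ_2 c)=g\circ_2 c\neq 0$ in $\Red^1$).
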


\begin{proof}
    Since $H^*(\dg\CG)=H^0(\dg\CG)$, we have that $H^0(\dg\CG)$ is the suboperad of $\CPL$ generated by $x-y$ and $c$. Hence, we have a surjective morphism $\FMan\to H^0(\dg\CG)$. We know that $\dim(\FMan(n))=n^{n-1}$ from \cite{conjFMan}, and we have computed the Euler characteristic of $\Red$ in Proposition~\ref{prp:EulerRed}. Since the dimensions are the same, the morphism $\FMan\to H^0(\dg\CG)$ is an isomorphism. Hence, $\varphi$ is injective.
\end{proof}

\begin{crl}\label{thm:HSFM}
    Let $u$ be the additional grading of $\FMan$ by the number of commutative product. The Hilbert series of $\FMan$ is given by:
    $$ f_\FMan(t,u)=f_\Red(t,u,-1)=\rev\left( \left(\exp\left(\frac{\ln(1+ut)}{u}\right)-1\right)\exp(-t)\right) $$
\end{crl}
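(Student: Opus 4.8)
The plan is to read the Hilbert series of $\FMan$ off the Euler characteristic of the complex $(\Red,d_0)$ underlying $\dg\CG$. The theorem preceding the statement shows that $H^*(\dg\CG)$ is concentrated in cohomological degree $0$, and the proof of Corollary~\ref{thm:main} identifies $H^0(\dg\CG)$ with the suboperad of $\CPL$ generated by $x-y$ and $c$, namely $\FMan$. Since a complex of finite-dimensional vector spaces has the same Euler characteristic as its cohomology, and here all cohomology sits in degree $0$, the signed generating series of $\Red$ must coincide with the Hilbert series of $\FMan$. The whole argument therefore reduces to keeping track of two compatible gradings and specializing the formula of Proposition~\ref{prp:EulerRed}.

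First I would pin down the two relevant gradings on $\Red$ and check that $d_0$ is homogeneous. The cohomological degree of a forest of reduced rooted Greg hypertrees equals its Greg weight, since $g$ is the only degree-raising generator and it carries both degree $1$ and Greg weight $1$; hence the degree of a forest is its number of black vertices. From the explicit description of $d_0$ in the preceding lemma, every term raises the Greg weight by exactly $1$ (each of the six operations creates precisely one new black vertex) while leaving the hypertree weight unchanged (the splittings, the hyperedge move of item~(4), and the grafts of items~(5) and~(6) never alter the cardinalities of hyperedges). Thus $d_0$ has bidegree $(0,+1)$ in (hypertree weight, Greg weight): the hypertree-weight grading by $u$ is preserved, so it descends to the cohomology, while the variable $v$ tracks exactly the cohomological degree.

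With these observations the $u$-graded Euler characteristic of $(\Red,d_0)$ is obtained from $f_\Red(t,u,v)$ by setting $v=-1$, since $(-1)^{\text{Greg weight}}$ is precisely the sign weighting each homogeneous piece; because $d_0$ respects the $u$-grading, this refinement is legitimate arity by arity and $u$-degree by $u$-degree. Under the isomorphism $H^0(\dg\CG)\simeq\FMan$, the hypertree weight corresponds to the number of commutative products (the generator $c=c_2$ has hypertree weight $1$, while $x-y$ has hypertree weight $0$), so the $u$-grading matches the one announced in the statement, giving $f_\FMan(t,u)=f_\Red(t,u,-1)$.

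It then only remains to specialize Proposition~\ref{prp:EulerRed}. Setting $v=-1$, the coefficient $v+1$ of $\frac{\ln(1+ut)}{u}$ vanishes, and
\begin{align*}
    \left((v+1)\frac{\ln(1+ut)}{u}+v-v\exp\left(\frac{\ln(1+ut)}{u}\right)\right)\Big|_{v=-1}
    &= -1+\exp\left(\frac{\ln(1+ut)}{u}\right)\\
    &= \exp\left(\frac{\ln(1+ut)}{u}\right)-1,
\end{align*}
which, after multiplication by $\exp(-t)$ and application of $\rev$, is exactly the claimed closed form. The only genuinely delicate point is the grading bookkeeping of the second and third paragraphs: verifying that $d_0$ really is homogeneous of bidegree $(0,+1)$, so that the Euler characteristic refines to the $u$-graded level, and that the $u$-grading on $H^0$ agrees with the number of commutative products. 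Once this is secured, the corollary is a one-line specialization.
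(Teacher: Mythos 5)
Your proposal is correct and follows exactly the argument the paper intends (the paper leaves this corollary's proof implicit, as an immediate consequence of the degree-$0$ concentration theorem, the identification $H^0(\dg\CG)\simeq\FMan$ from Corollary~\ref{thm:main}, and the specialization $v=-1$ of Proposition~\ref{prp:EulerRed}). Your extra bookkeeping — checking that the differential has bidegree $(0,+1)$ in (hypertree weight, Greg weight) and that the $u$-grading on $H^0$ matches the count of commutative products — is precisely the verification needed to make that implicit argument rigorous.
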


\begin{rmk}
    Moreover, from Remarks~\ref{rmk:gact}~and~\ref{rmk:descrhred}, and using the same notations, we have the following isomorphism of $\mathfrak{S}_n$--modules:
    $$ \FMan(n)\simeq\bigoplus_{\lambda\vdash n}\left(\bigotimes_{i}\Lie(\lambda_i)\right)\otimes_{\mathfrak{S}_\lambda}\Ss(\lambda) $$
\end{rmk}

\begin{rmk}
    Theorem~3.3 from \cite{LePLe} gives a description of the subspace $\Lie(V)\subseteq\PL(V)$ using constructions similar to the operadic twisting of $\PL$. This description can be understood as a consequence of \cite[Theorem 5.1]{PLTW}. It can be generalized to give description of the subspace $\FMan(V)\subseteq\CPL(V)$ using Theorem~\ref{thm:main}.
\end{rmk}

\section*{Acknowledgements}
I express my deepest gratitude to Vladimir Dotsenko for his help and advice. I would also like to thank Thomas Agugliaro for the discussions we had during the writing of this article.

\vfill

\pagebreak

\section*{Appendix}
\hspace{0pt}
\vfill
    \noindent\begin{minipage}{\textwidth}
        \centering
        \textsc{Supplementary Figure 1.} Rewriting rules of $\CPL$\\
        \vspace{15pt}
        \includegraphics{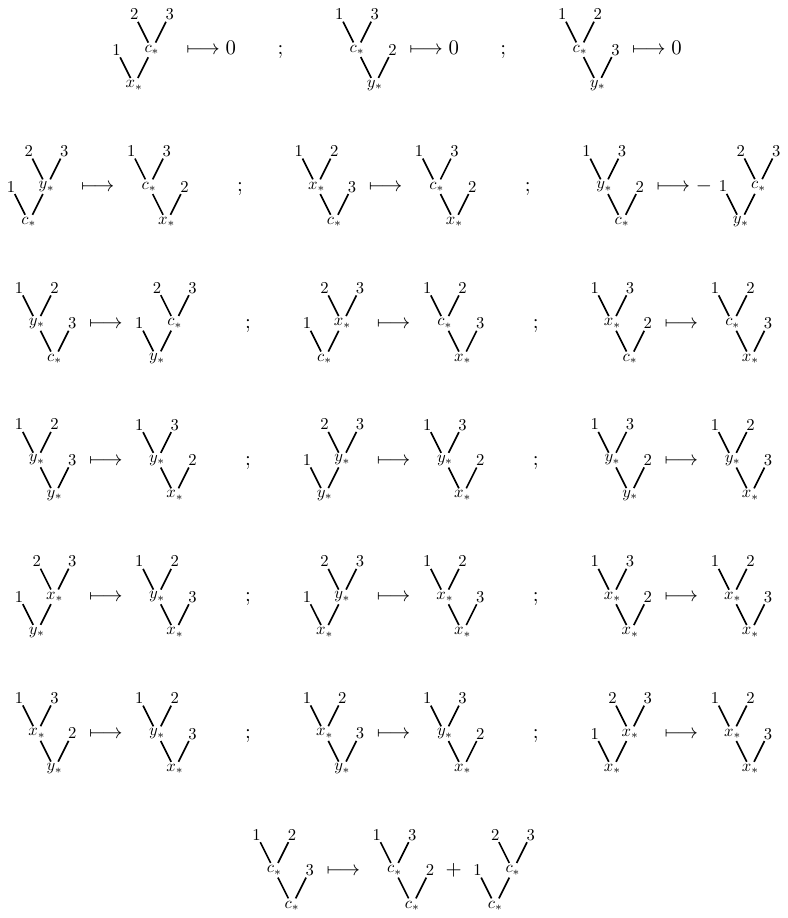}
    \end{minipage}
\vfill
\hspace{0pt}
\pagebreak
\hspace{0pt}
\vfill
    \noindent\begin{minipage}{\textwidth}
        \centering
        \textsc{Supplementary Figure 2.} Rewriting rules of $\CG$ involving $g_*$\\
        \vspace{15pt}
        \includegraphics{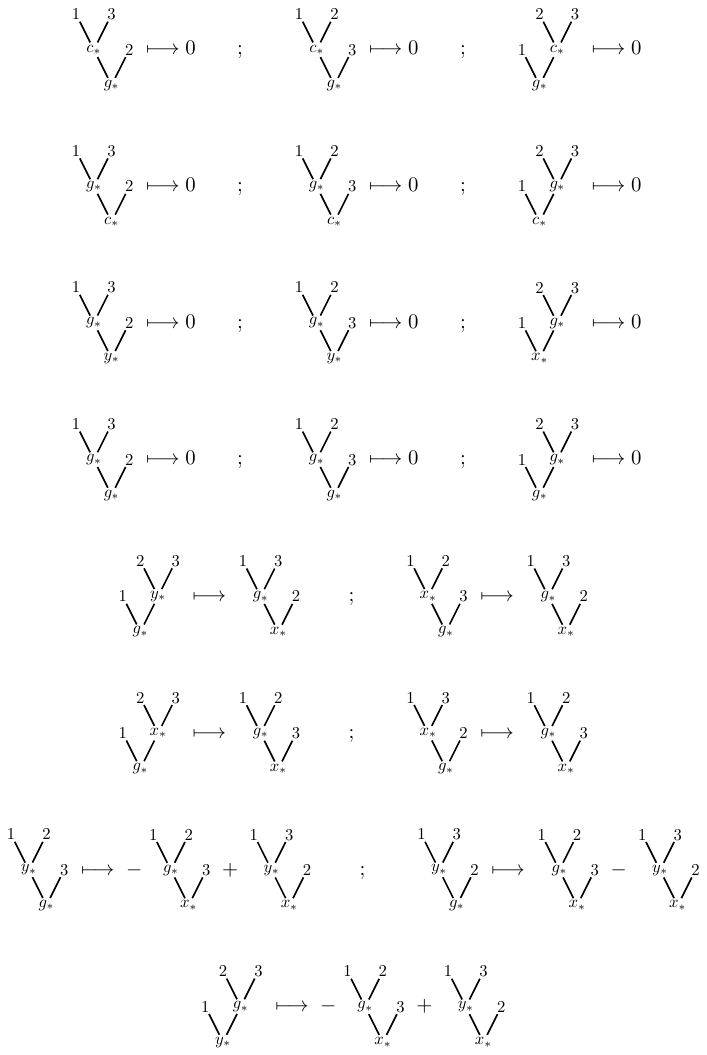}
    \end{minipage}
\vfill
\hspace{0pt}
\pagebreak

\printbibliography

\end{document}